\documentclass{amsart}
\usepackage{amsmath, amsthm, amssymb, amscd, epsfig, eucal}
\usepackage{pinlabel}

\begin{document}

\newtheorem{theorem}{Theorem}[subsection]
\newtheorem{lemma}[theorem]{Lemma}
\newtheorem{proposition}[theorem]{Proposition}
\newtheorem{corollary}[theorem]{Corollary}
\newtheorem{conjecture}[theorem]{Conjecture}
\newtheorem{question}[theorem]{Question}
\newtheorem{problem}[theorem]{Problem}
\newtheorem*{claim}{Claim}
\newtheorem*{criterion}{Criterion}

\theoremstyle{definition}
\newtheorem{definition}[theorem]{Definition}
\newtheorem{construction}[theorem]{Construction}
\newtheorem{notation}[theorem]{Notation}

\theoremstyle{remark}
\newtheorem{remark}[theorem]{Remark}
\newtheorem{example}[theorem]{Example}

\numberwithin{equation}{subsection}

\newcommand\id{\textnormal{id}}

\newcommand\M{\mathcal M}
\newcommand\Z{\mathbb Z}
\newcommand\R{\mathbb R}
\newcommand\C{\mathbb C}
\newcommand\RP{\mathbb{RP}}
\newcommand\Q{\mathbb Q}
\newcommand\N{\mathbb N}
\renewcommand\H{\mathbb H}
\newcommand\Haus{H}
\newcommand\QI{\textnormal{QI}}
\newcommand\CAT{\textnormal{CAT}}
\newcommand\Out{\textnormal{Out}}

\renewcommand\Pr{{\mathbb{P}}}
\newcommand\Ex{{\mathbf E}}
\newcommand\Pee{{\mathbf P}}

\newcommand\cl{\textnormal{cl}}
\newcommand\scl{\textnormal{scl}}
\newcommand{\con}[1]{C_{#1}}
\newcommand{\length}{\textnormal{length}}
\newcommand{\eval}{\textnormal{eval}}
\newcommand{\word}{\textnormal{word}}
\newcommand{\path}{\textnormal{path}}
\newcommand{\cone}{\textnormal{cone}}
\newcommand{\bnd}{\textnormal{bnd }}

\newcommand\til{\widetilde}
\newcommand\Tri{\Delta}
\newcommand\SL{\textnormal{SL}}
\newcommand\1{{\bf 1}}

\newcommand{\norm}[1]{\left|#1\right|}

\title{The ergodic theory of hyperbolic groups}
\author{Danny Calegari}
\address{Department of Mathematics \\ Caltech \\
Pasadena CA, 91125}
\email{dannyc@its.caltech.edu}
\date{\today}

\begin{abstract}
These notes are a self-contained introduction to the use of dynamical and probabilistic 
methods in the study of hyperbolic groups. Most of this material is standard;
however some of the proofs given are new, and some results are proved in greater
generality than have appeared in the literature.
\end{abstract}

\maketitle

\tableofcontents

\section{Introduction}

These are notes from a minicourse given at a workshop in Melbourne July 11--15 2011.
There is little pretension to originality; the main novelty is firstly that we give
a new (and much shorter) proof of Coornaert's theorem on Patterson--Sullivan
measures for hyperbolic groups (Theorem~\ref{measure_is_conformal}), and secondly that
we explain how to combine the results of Calegari--Fujiwara in \cite{Calegari_Fujiwara}
with that of Pollicott--Sharp \cite{Pollicott_Sharp} to prove central limit theorems
for quite general classes of functions on hyperbolic groups
(Corollary~\ref{distance_CLT} and Theorem~\ref{holder_CLT}), crucially {\em without}
the hypothesis that the Markov graph encoding an automatic structure is ergodic.

A final section on random walks is much more cursory.

\section{Hyperbolic groups}

\subsection{Coarse geometry}

The fundamental idea in geometric group theory is to study groups as automorphisms
of geometric spaces, and as a special case, to study the group itself (with its
canonical self-action) as a geometric space. This is accomplished most directly by
means of the {\em Cayley graph} construction.

\begin{definition}[Cayley graph]
Let $G$ be a group and $S$ a (usually finite) generating set. Associated to $G$ and $S$
we can form the {\em Cayley graph} $C_S(G)$. This is a graph with vertex set $G$,
and with an edge from $g$ to $gs$ for all $g\in G$ and $s\in S$.
\end{definition}

The action of $G$ on itself by (left) multiplication induces a properly discontinuous 
action of $G$ on $C_S(G)$ by simplicial automorphisms. 

If $G$ has no $2$-torsion, the action is free and properly discontinuous, and the
quotient is a wedge of $|S|$ circles $X_S$. In this case, if
$G$ has a presentation $G=\langle S\; | \; R \rangle$ we can think of $C_S(G)$ as
the covering space of $X_S$ corresponding to the subgroup of the
free group $F_S$ normally generated by $R$, and the action of $G$ on $C_S(G)$ is
the deck group of the covering.

\begin{figure}[ht]
\centering
\labellist
\small\hair2pt
\endlabellist
\includegraphics[scale=1]{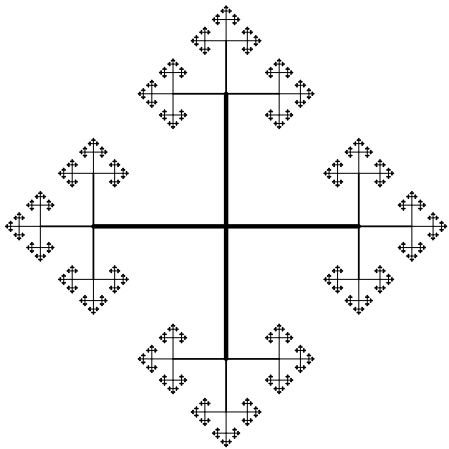}
\caption{The Cayley graph of $F_2=\langle a,b \; | \; \rangle$ with generating
set $S=\lbrace a,b\rbrace$}
\label{fig:F2_cayley}
\end{figure}
\medskip

We assume the reader is familiar with the notion of a metric space, i.e.\/ a space
$X$ together with a symmetric non-negative real-valued function $d_X$ on $X\times X$ 
which vanishes precisely on the diagonal, and which satisfies the triangle inequality
$d_X(x,y)+ d_X(y,z) \ge d_X(x,z)$ for each triple $x,y,z\in X$. 
A metric space is a {\em path metric space} if for each $x,y\in X$, the distance
$d_X(x,y)$ is equal to the infimum of the set of numbers $L$ for which there is
a $1$-Lipschitz map $\gamma:[0,L] \to X$ sending $0$ to $x$ and $L$ to $y$. It is
a {\em geodesic} metric space if it is a path metric space and if the infimum is achieved
on some $\gamma$ for each pair $x,y$; such a $\gamma$ is called a geodesic. Finally,
a metric space is {\em proper} if closed metric balls of bounded radius
are {\em compact} (equivalently, for each point $x$ the function $d(x,\cdot):X \to \R$
is proper).

The graph $C_S(G)$ can be canonically equipped with the structure of a 
{\em geodesic metric space}. This is accomplished by making each edge 
isometric to the Euclidean unit interval. If $S$ is finite, $C_S(G)$ is {\em proper}. 
Note that $G$ itself inherits a subspace
metric from $C_S(G)$, called the {\em word metric}. We denote the word metric by
$d_S$, and define $|g|_S$ (or just $|g|$ if $S$ is understood) to be $d_S(\id,g)$. 
Observe that $d_S(g,h)=|g^{-1}h|_S = |h^{-1}g|_S$ and that
$|g|_S$ is the length of the shortest word in elements of $S$ and their inverses 
representing the element $g$.

\medskip

The most serious shortcoming of this construction is its dependence on the choice of
a generating set $S$. Different choices of generating set $S$ give rise to different
spaces $C_S(G)$ which are typically not even homeomorphic. The standard way to resolve
this issue is to coarsen the geometric category in which one works.

\begin{definition}
Let $X,d_X$ and $Y,d_Y$ be metric spaces. A map $f:X \to Y$ ({\em not} assumed to be
continuous) is a {\em quasi-isometric map} if there are constants $K\ge 1,\epsilon\ge 0$ 
so that
$$K^{-1}d_X(x_1,x_2) - \epsilon \le d_Y(f(x_1),f(x_2)) \le Kd_X(x_1,x_2) + \epsilon$$
for all $x_1,x_2\in X$. It is said to be a {\em quasi-isometry} if further $f(X)$ is
a {\em net} in $Y$; that is, if there is some $R$ so that $Y$ is equal
to the $R$-neighborhood of $f(X)$.
\end{definition}
One also uses the terminology {\em $K,\epsilon$ quasi-isometric map} or
{\em $K,\epsilon$ quasi-isometry} if the constants are specified. Note that 
a $K,0$ quasi-isometric map is the same thing as a $K$ bilipschitz map.
The best constant $K$ is called the {\em multiplicative constant}, and the best
$\epsilon$ the {\em additive constant} of the map.

We denote the $R$-neighborhood of a set $\Sigma$ by $N_R(\Sigma)$. Hence
a quasi-isometry is a quasi-isometric map for which $Y=N_R(f(X))$ for some $R$.

\begin{remark}
It is much more common to use the terminology {\em quasi-isometric embedding} instead
of quasi-isometric map as above; we consider this terminology misleading, and therefore
avoid it.
\end{remark}

\begin{lemma}\label{qi_equivalence}
Quasi-isometry is an equivalence relation.
\end{lemma}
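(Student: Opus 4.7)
The plan is to verify the three properties of an equivalence relation in turn: reflexivity, transitivity, and (the only substantive point) symmetry.

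For reflexivity, the identity map $\id_X:X\to X$ is trivially a $1,0$ quasi-isometric map and $X$ is a $0$-net in itself. For transitivity, if $f:X\to Y$ is a $K_1,\epsilon_1$ quasi-isometry with $f(X)$ an $R_1$-net and $g:Y\to Z$ is a $K_2,\epsilon_2$ quasi-isometry with $g(Y)$ an $R_2$-net, then composing the defining inequalities gives that $g\circ f$ is a $K_1K_2,\; K_2\epsilon_1+\epsilon_2$ quasi-isometric map. To check the net condition, given $z\in Z$ choose $y\in Y$ with $d_Z(z,g(y))\le R_2$ and $x\in X$ with $d_Y(y,f(x))\le R_1$; then $d_Z(z,g(f(x)))\le R_2+K_2R_1+\epsilon_2$, so $g(f(X))$ is a net in $Z$.

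The real content is symmetry, and this is where I expect the minor technical work. Given a $K,\epsilon$ quasi-isometry $f:X\to Y$ with $f(X)$ an $R$-net in $Y$, I would define a \emph{quasi-inverse} $\bar f:Y\to X$ by choosing, for each $y\in Y$, a point $\bar f(y)\in X$ with $d_Y(y,f(\bar f(y)))\le R$ (this requires the axiom of choice in general, but costs nothing in a proper setting like $C_S(G)$). For $y_1,y_2\in Y$ set $x_i=\bar f(y_i)$; then the triangle inequality gives
$$|d_Y(f(x_1),f(x_2)) - d_Y(y_1,y_2)| \le 2R,$$
and feeding this through the quasi-isometric inequality for $f$ in both directions yields
$$K^{-1}d_Y(y_1,y_2) - K^{-1}(2R+\epsilon) \le d_X(\bar f(y_1),\bar f(y_2)) \le K\, d_Y(y_1,y_2) + K(2R+\epsilon),$$
so $\bar f$ is a $K,\, K(2R+\epsilon)$ quasi-isometric map. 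Finally, to see that $\bar f(Y)$ is a net in $X$, for any $x\in X$ apply the construction to $y=f(x)$: then $d_Y(f(x),f(\bar f(y)))\le R$, and the lower bound in the quasi-isometry inequality for $f$ gives $d_X(x,\bar f(y))\le K(R+\epsilon)$, so $\bar f(Y)$ is a $K(R+\epsilon)$-net in $X$.

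The only step that demands any thought is symmetry, and even there the one observation doing all the work is that a $K,\epsilon$ quasi-isometric map distorts distances by bounded additive and multiplicative factors, so a bounded error $R$ in choosing preimages translates to a bounded additive error in the inequality for $\bar f$. With the three properties established, quasi-isometry is an equivalence relation on metric spaces.
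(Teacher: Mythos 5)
Your proof is correct and follows essentially the same route as the paper: symmetry is established by constructing a quasi-inverse $\bar f$ via choosing near-preimages within the net constant $R$, and the resulting constants $K,\,K(2R+\epsilon)$ match the paper's exactly. You are somewhat more thorough than the paper (which dismisses reflexivity and transitivity as obvious and does not explicitly verify that $\bar f(Y)$ is a net), but the underlying argument is identical.
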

\begin{proof}
Reflexivity and transitivity are obvious, so we must show symmetry.
For each $y\in Y$ choose $x\in X$ with $d_Y(y,f(x))\le R$ (such an $x$ exists
by definition) and define $g(y)=x$. Observe $d_Y(y,fg(y))\le R$ by definition. Then
$$d_X(g(y_1),g(y_2))\le Kd_Y(fg(y_1),fg(y_2)) + K\epsilon \le Kd_Y(y_1,y_2) + K(\epsilon+2R)$$
Similarly,
$$d_X(g(y_1),g(y_2))\ge K^{-1}d_Y(fg(y_1),fg(y_2)) - K^{-1}\epsilon \ge K^{-1}d_Y(y_1,y_2) - K^{-1}(\epsilon+2R)$$
proving symmetry.
\end{proof}
Note that the compositions $fg$ and $gf$ as above move points a bounded distance.
One can define a category in which objects are equivalence classes of metric spaces
under the equivalence relation generated by thickening (i.e.\/ isometric inclusion as
a net in a bigger space), and morphisms are equivalence classes of quasi-isometric
maps, where two maps are equivalent if their values on each point are a uniformly
bounded distance apart. In this category, quasi-isometries are isomorphisms. In particular, the
set of quasi-isometries of a metric space $X$, modulo maps that move points a bounded distance,
is a {\em group}, denoted $\QI(X)$, which only depends on the quasi-isometry type of $X$. 
Determining $\QI(X)$, even for very simple spaces, is typically extraordinarily difficult.

\begin{example}
A metric space $X,d_X$ is quasi-isometric to a point if and only if it has bounded diameter.
A Cayley graph $C_S(G)$ (for $S$ finite) is quasi-isometric to a point 
if and only if $G$ is finite.
\end{example}

\begin{example}\label{generating_sets_qi}
If $S$ and $T$ are two finite generating sets for a group $G$ then the identity map from $G$
to itself is a quasi-isometry (in fact, a bilipschitz map)
of $G,d_S$ to $G,d_T$. For, there are constants
$C_1$ and $C_2$ so that $d_T(s)\le C_1$ for all $s\in S$, and $d_S(t)\le C_2$ for all
$t\in T$, and therefore $C_2^{-1}d_T(g,h)\le d_S(g,h)\le C_1d_T(g,h)$.
\end{example}

Because of this, the quasi-isometry class of $G,d_S$ is {\em independent} of the
choice of finite generating set, and we can speak unambiguously of the quasi-isometry
class of $G$.

\medskip

The Schwarz Lemma connects the geometry of groups to the geometry of spaces 
they act on.

\begin{lemma}[Schwarz Lemma]\label{Schwarz_lemma}
Let $G$ act properly discontinuously and 
cocompactly by isometries on a proper geodesic metric space $X$. Then 
$G$ is finitely generated by some set $S$, and the
orbit map $G \to X$ sending $g$ to $gx$ (for any $x\in X$) is a quasi-isometry from
$G,d_S$ to $X$.
\end{lemma}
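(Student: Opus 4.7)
The plan is to construct a finite generating set $S$ from the geometry of $X$, and then verify both inequalities of the quasi-isometry definition directly along geodesics, using cocompactness for the upper bound and the isometric action for the lower bound.

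First I fix a basepoint $x_0 \in X$. By cocompactness there is a compact set $K \subset X$ with $GK = X$, and after enlarging $K$ I may assume $x_0 \in K$; then set $D = \textnormal{diam}(K \cup \{x_0\})$, so that for every $x \in X$ there is some $g \in G$ with $d_X(gx_0, x) \le D$. This will give the net condition for free. Now define
\[
S = \{ s \in G \setminus \{\id\} : d_X(x_0, s x_0) \le 2D + 1 \}.
\]
Since $X$ is proper, the closed ball of radius $2D+1$ about $x_0$ is compact, and proper discontinuity of the action then forces $S$ to be finite. This will be the generating set.

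Next I show $S$ generates $G$ and gives the upper bound on word length. Given $g \in G$, pick a geodesic $\gamma : [0, L] \to X$ from $x_0$ to $gx_0$ with $L = d_X(x_0, gx_0)$, and choose points $x_0 = y_0, y_1, \ldots, y_n = gx_0$ along $\gamma$ with consecutive spacing at most $1$ and $n \le L + 1$. For each interior $y_i$ select $g_i \in G$ with $d_X(g_i x_0, y_i) \le D$, setting $g_0 = \id$ and $g_n = g$. Then
\[
d_X(x_0, g_i^{-1} g_{i+1} x_0) = d_X(g_i x_0, g_{i+1} x_0) \le D + 1 + D = 2D + 1,
\]
so each $g_i^{-1} g_{i+1}$ lies in $S \cup \{\id\}$. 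Telescoping, $g = (g_0^{-1} g_1)(g_1^{-1} g_2)\cdots (g_{n-1}^{-1} g_n)$ is a word of length at most $n$ in $S$, giving $|g|_S \le d_X(x_0, g x_0) + 1$. The lower bound is easier: if $g = s_1 \cdots s_m$ is a shortest $S$-word, the triangle inequality together with the isometric action gives $d_X(x_0, g x_0) \le \sum_i d_X(x_0, s_i x_0) \le (2D+1)|g|_S$.

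Combining these and using $d_X(g_1 x_0, g_2 x_0) = d_X(x_0, g_1^{-1} g_2 x_0)$ together with $d_S(g_1, g_2) = |g_1^{-1} g_2|_S$ translates the estimates to arbitrary pairs, showing the orbit map is a $(2D+1, 1)$ quasi-isometric map; the net condition from the first paragraph then upgrades this to a quasi-isometry. The step requiring the most care is the construction of the chain $g_0, \ldots, g_n$ and the telescoping argument — everything else is essentially bookkeeping — but even there the only input beyond the definitions is that a proper geodesic space lets one subdivide a geodesic into controlled pieces and approximate each midpoint by an orbit point.
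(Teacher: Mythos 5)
Your proof is correct and follows essentially the same route as the paper's: extract a finite set $S$ of elements moving the basepoint a bounded distance (finite by properness plus proper discontinuity), chain orbit points along a geodesic to get $|g|_S \le d_X(x_0,gx_0)+1$, and use the triangle inequality for the reverse bound, with cocompactness supplying the net condition. The only differences are cosmetic choices of constants ($2D+1$ versus the paper's $4R+2$) and that the paper phrases $S$ via translates of a ball $B$ meeting $B$ rather than directly via displacement of the basepoint.
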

\begin{proof}
Since $X$ is proper and $G$ acts cocompactly there is an $R$ so that $GN_R(x)=X$.
Note that $Gx$ is a net, since every point of $X$ is contained in some translate $gB$
and is therefore within distance $R$ of $gx$. 

Let $B=N_{2R+1}(x)$. Since $G$ acts properly discontinuously, there are only
finitely many $g$ in $G$ for which $gB\cap B$ is nonempty; let $S$ be the nontrivial
elements of this set.

Now, if $g,h \in G$ are arbitrary, 
let $\gamma$ be a geodesic in $X$ from $gx$ to $hx$. Parameterize $\gamma$ by arclength,
and for each integer $i \in (0,|\gamma|)$ let $g_i$ be such that $d_X(g_ix,\gamma(i))\le R$.
Then $g_i^{-1}g_{i+1} \in S$ and therefore 
$$d_S(g,h)=|g^{-1}h| \le |\gamma|+1 = d(gx,hx)+1$$
which shows incidentally that $S$ generates $G$.

Conversely, if $L:=d_S(g,h)$ and
$g_i$ is a sequence of elements with $g_0=g$ and $g_L=h$ and each $g_i^{-1}g_{i+1}\in S$, then
there is a path $\gamma_i$ from $g_ix$ to $g_{i+1}x$ of length at most $4R+2$, and the
concatenation of these paths certifies that 
$$d(gx,hx)\le (4R+2)|g^{-1}h| = (4R+2)d_S(g,h)$$
This completes the proof of the lemma.
\end{proof}

\begin{example}
If $G$ is a group and $H$ is a subgroup of finite index, then $G$ and $H$ are quasi-isometric
(for, both act properly discontinuously and cocompactly on $C_S(G)$).
Two groups are said to be {\em commensurable} if they have isomorphic subgroups of finite
index; the same argument shows that commensurable groups are quasi-isometric.
\end{example}

\begin{example}
Any two regular trees of (finite) valence $\ge 3$ are quasi-isometric; for, any such tree
admits a cocompact action by a free group of finite rank, and any two free groups of
finite rank are commensurable.
\end{example}

\begin{example}
The set of ends of a geodesic metric space is a quasi-isometry invariant.
A famous theorem of Stallings \cite{Stallings_ends} says that a finitely generated
group with more than one end splits over a finite subgroup; it follows that the
property of splitting over a finite subgroup is a quasi-isometry invariant.

Finiteness of the edge groups (in a splitting) is detected quasi-isometrically by the
existence of {\em separating} compact subsets. Quasi-isometry can further detect
the finiteness of the vertex groups, and in particular one observes that a group is
quasi-isometric to a free group if and only if it is virtually free. 
\end{example}

\begin{example}
Any two groups that act cocompactly and properly discontinuously on the same
space $X$ are quasi-isometric. For example, if $M_1,M_2$ are closed Riemannian manifolds with
isometric universal covers, then $\pi_1(M_1)$ and $\pi_1(M_2)$ are quasi-isometric.
It is easy to produce examples for which the groups in question are not commensurable;
for instance, a pair of closed hyperbolic $3$-manifolds $M_1$, $M_2$ with different
invariant trace fields (see \cite{Maclachlan_Reid}). 
\end{example}

\begin{remark}
In the geometric group theory literature, Lemma~\ref{Schwarz_lemma} 
is often called the ``Milnor--\v Svarc (or \v Svarc-Milnor) Lemma''; ``\v Svarc'' 
here is in fact the well-known mathematical physicist Albert Schwarz; it is our 
view that the orthography ``\v Svarc'' tends to obscure this. Actually, the content 
of this Lemma was first observed by Schwarz in the early 50's and only 
rediscovered 15 years later by Milnor at a time when the work of Soviet 
mathematicians was not widely disseminated in the west. 
\end{remark}

\subsection{Hyperbolic spaces}

In a geodesic metric space a {\em geodesic triangle} is just a union of three
geodesics joining three points in pairs. If the three points are $x,y,z$ we
typically denote the (oriented) geodesics by $xy$, $yz$ and $zx$ respectively;
this notation obscures the possibility that the geodesics in question are not
uniquely determined by their endpoints.

\begin{definition}
A geodesic metric space $X,d_X$ is {\em $\delta$-hyperbolic} if for any 
geodesic triangle, each side of the triangle is contained in the
$\delta$-neighborhood of the union of the other two sides.
A metric space is {\em hyperbolic} if it is $\delta$-hyperbolic for some $\delta$.
\end{definition}
One sometimes says that {\em geodesic triangles are $\delta$-thin}. 

\begin{figure}[ht]
\centering
\labellist
\small\hair2pt
\endlabellist
\includegraphics[scale=1]{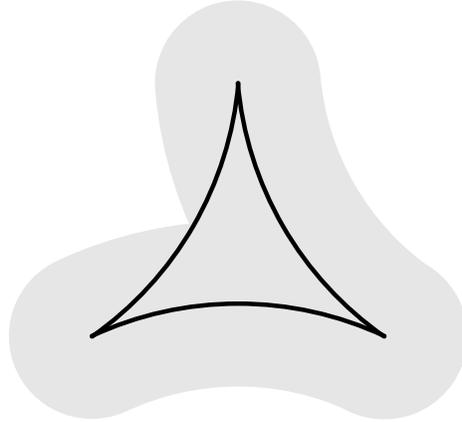}
\caption{A $\delta$-thin triangle; the gray tubes have thickness $\delta$.}
\label{fig:thin_triangle}
\end{figure}
\medskip

\begin{example}
A tree is $0$-hyperbolic.
\end{example}

\begin{example}
Hyperbolic space (of any dimension) is $\delta$-hyperbolic for a uniform $\delta$.
\end{example}

\begin{example}
If $X$ is a simply-connected complete Riemannian manifold with curvature bounded above
by some $K<0$ then $X$ is $\delta$-hyperbolic for some $\delta$ depending on $K$.
\end{example}

\begin{definition}
A geodesic metric space $X$ is $\CAT(K)$ for some $K$ if triangles are
{\em thinner} than comparison triangles in a space of constant curvature $K$. This
means that if $xyz$ is a geodesic triangle in $X$, and $x'y'z'$ is a geodesic triangle in a 
complete simply connected Riemannian manifold $Y$ of constant curvature $K$ with edges of the same
lengths, and $\phi:xyz \to x'y'z'$ is an isometry on each edge, then for any $w \in yz$
we have $d_X(x,w) \le d_Y(x',\phi(w))$.
\end{definition}

The initials $\CAT$ stand for Cartan--Alexandrov--Toponogov, who made substantial 
contributions to the theory of comparison geometry.

\begin{example}
From the definition, a $\CAT(K)$ space is $\delta$-hyperbolic whenever the complete
simply connected Riemannian $2$-manifold of constant curvature $K$ is $\delta$-hyperbolic.
Hence a $\CAT(K)$ space is hyperbolic if $K<0$.
\end{example}

\begin{example}\label{exa:nearest_point}
Nearest point projection to a convex subset of a $\CAT(K)$ space with $K\le 0$ is
distance nonincreasing. Therefore the subspace metric and the path metric on a convex
subset of a $\CAT(K)$ space agree, and such a subspace is itself $\CAT(K)$.
\end{example}

Thinness of triangles implies thinness of arbitrary polygons.

\begin{example}\label{exa:quadrilateral}
Let $X$ be $\delta$-hyperbolic and let $abcd$ be a geodesic quadrilateral. Then either
there are points on $ab$ and $cd$ at distance $\le 2\delta$ or there are points on
$ad$ and $bc$ at distance $\le 2\delta$, or possibly both.
\end{example}

\begin{figure}[ht]
\centering
\labellist
\small\hair2pt
\endlabellist
\includegraphics[scale=1]{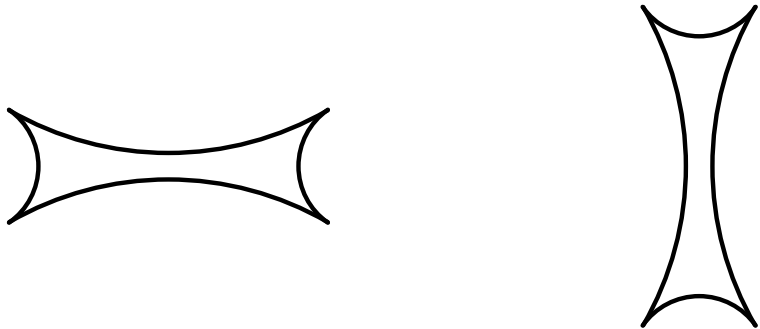}
\caption{Two ways that a quadrilateral can be thin}
\label{fig:thin_quadrilateral}
\end{figure}

The number of essentially distinct ways in which an $n$-gon can be thin is equal
to the $n$th Catalan number. By cutting up a polygon into 
triangles and examining the implications of $\delta$-thinness for each triangle,
one can reason about the geometry of complicated configurations in $\delta$-hyperbolic
space.

\begin{lemma}\label{nearest_point_projection}
Let $X$ be $\delta$-hyperbolic, let $\gamma$ be a geodesic segment/ray/line
in $X$, and let $p \in X$. Then there is a point $q$ on $\gamma$ realizing the infimum of
distance from $p$ to points on $\gamma$, and moreover for any two such points
$q,q'$ we have $d_X(q,q')\le 4\delta$.
\end{lemma}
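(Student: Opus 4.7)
The plan is to handle existence and the quantitative uniqueness statement separately.

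For existence, I would parametrize $\gamma$ by arclength as a $1$-Lipschitz map from an interval $I\subseteq\R$ into $X$. Composition with $d_X(p,\cdot)$ (which is itself $1$-Lipschitz) gives a continuous function $f(t):=d_X(p,\gamma(t))$ on $I$. In the segment case $I$ is compact and the minimum is attained immediately. In the ray or line case, pick any basepoint $t_0\in I$ and set $D_0=f(t_0)$; the reverse triangle inequality gives $f(t)\ge |t-t_0|-D_0$, which tends to $\infty$ as $t$ runs out to the ends of $I$. Hence the infimum is already the infimum of $f$ on the compact subinterval $[t_0-2D_0,t_0+2D_0]\cap I$, where it is attained by continuity. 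This part is routine and uses no hyperbolicity.

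For the quantitative statement, let $q,q'\in\gamma$ both realize the minimum $D:=d_X(p,\gamma)$. Set $L:=d_X(q,q')$; since $\gamma$ is a geodesic, the subarc of $\gamma$ between $q$ and $q'$ has length exactly $L$ and consists entirely of points at distance $\ge D$ from $p$. Form the geodesic triangle with vertices $p,q,q'$, using geodesics $[pq]$ and $[pq']$ (each of length $D$) together with this subarc of $\gamma$ as the third side. Let $m$ be the midpoint of the $qq'$-subarc, so $d_X(m,q)=d_X(m,q')=L/2$ and $d_X(p,m)\ge D$.

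By $\delta$-thinness, $m$ lies within $\delta$ of a point $m'$ on $[pq]\cup[pq']$; after relabeling assume $m'\in[pq]$. Then
\[
D\;\le\;d_X(p,m)\;\le\;d_X(p,m')+\delta\;=\;(D-d_X(m',q))+\delta,
\]
forcing $d_X(m',q)\le\delta$, and hence $d_X(m,q)\le d_X(m,m')+d_X(m',q)\le 2\delta$. Therefore $L/2\le 2\delta$, i.e.\ $d_X(q,q')\le 4\delta$. The only step that requires any care is the choice of the \emph{midpoint} of the third side, which is what guarantees that the hyperbolicity inequality on either $[pq]$ or $[pq']$ gives the same bound; everything else is a direct unpacking of the thin-triangle condition.
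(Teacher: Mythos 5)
Your proof is correct and follows essentially the same route as the paper: existence via properness of $d_X(p,\cdot)$ on $\gamma$, and the $4\delta$ bound by applying $\delta$-thinness to the triangle $pqq'$ at the midpoint of the subarc of $\gamma$ between $q$ and $q'$, using that the nearby point lies on a geodesic of length $D$ from $p$. The paper phrases this as a contradiction (the midpoint would be strictly closer to $p$ than $q$), while you bound $d_X(m,q)\le 2\delta$ directly, but the computation is the same.
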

\begin{proof}
The existence of some point realizing the infimum follows from the properness of
$d(p,\cdot):\gamma \to \R$, valid for any geodesic in any metric space.

Let $q,q'$ be two such points, and
if $d(q,q')>4\delta$ let $q''$ be the midpoint of the segment $qq'$, so
$d(q,q'')=d(q'',q')>2\delta$. Without loss of
generality there is $r$ on $pq$ with $d(r,q'')\le \delta$ hence $d(r,q)>\delta$. But
then $$d(p,q'')\le d(p,r)+d(r,q'') \le d(p,r) + \delta < d(p,r) + d(r,q) = d(p,q)$$
contrary to the fact that $q$ minimizes the distance from $p$ to points on $\gamma$.
\end{proof}

Lemma~\ref{nearest_point_projection} says that there is an approximate 
``nearest point projection'' map $\pi$ from $X$ to any geodesic $\gamma$
(compare with Example~\ref{exa:nearest_point}). This map is not
continuous, but nearby points must map to nearby points, in the sense that
$d(\pi(x),\pi(y))\le d(x,y)+8\delta$.

\medskip

We would now like to show that the property of being hyperbolic is preserved under
quasi-isometry. The problem is that the property of $\delta$-hyperbolicity is
expressed in terms of geodesics, and quasi-isometries do not take geodesics to geodesics.

A {\em quasigeodesic} segment/ray/line is the image of a segment/ray/line
in $\R$ under a quasi-isometric map. For
infinite or semi-infinite intervals this definition has content; for finite intervals
this definition has no content without specifying the constants involved. Hence we can
talk about a $K,\epsilon$ quasigeodesic segment/ray/line.

\begin{lemma}[Morse lemma]\label{morse_lemma}
Let $X,d_X$ be a proper $\delta$-hyperbolic space. Then for any $K,\epsilon$
there is a constant $C$ (depending in an explicit way on $K,\epsilon,\delta$)
so that any $K,\epsilon$ quasigeodesic $\gamma$ is within Hausdorff distance $C$ of
a genuine geodesic $\gamma^g$. If $\gamma$ has one or two endpoints, $\gamma^g$ can be
chosen to have the same endpoints.
\end{lemma}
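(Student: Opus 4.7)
The plan is first to reduce to the case of a continuous, arclength-parameterized quasigeodesic by replacing an arbitrary $K,\epsilon$ quasigeodesic with the piecewise-geodesic path interpolating through its values on the integer net in its domain; since $d_X(\gamma(n),\gamma(n+1))\leq K+\epsilon$ each inserted piece is short, the resulting path is a $K',\epsilon'$ quasigeodesic with controlled constants and lies within bounded Hausdorff distance of the original. In the finite case $\gamma:[0,L]\to X$ joining $x$ to $y$, the main step is the bound $\gamma\subset N_D(\gamma^g)$ for some $D=D(K,\epsilon,\delta)$, where $\gamma^g$ is a choice of geodesic from $x$ to $y$.

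I would argue this by contradiction. Suppose $D:=\sup_t d_X(\gamma(t),\gamma^g)$ is very large; pick $t_0$ realizing $D$ up to $1$, and let $t_-<t_0<t_+$ be the closest times on either side where $d_X(\gamma(t_\pm),\gamma^g)=D/2$ (these exist since $\gamma(0),\gamma(L)\in\gamma^g$), so that $\gamma|_{[t_-,t_+]}$ lies in the complement of $N_{D/2}(\gamma^g)$. The length of $\gamma|_{[t_-,t_+]}$ equals $t_+-t_-$, which the quasigeodesic inequality bounds above by $K(d_X(\gamma(t_-),\gamma(t_+))+\epsilon)$; and this in turn is at most linear in $D$ and in the separation of the nearest-point projections $p_-',p_+'$ of $\gamma(t_\pm)$ onto $\gamma^g$ (Lemma~\ref{nearest_point_projection}). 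To obtain a contradiction, I would use the following ``exponential divergence'' phenomenon in a $\delta$-hyperbolic space, which follows from iterated application of thinness (for triangles, or via Example~\ref{exa:quadrilateral}) to bisections of the geodesic $[\gamma(t_-),\gamma(t_+)]$: any continuous path from $\gamma(t_-)$ to $\gamma(t_+)$ staying outside the $D/2$--neighborhood of the geodesic joining its endpoints has length at least $c\cdot 2^{D/(2\delta)}$. Since this geodesic is itself within $D+O(\delta)$ of $\gamma^g$, the exponential lower bound on length defeats the linear upper bound once $D$ is sufficiently large, producing a contradiction and bounding $D$.

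The reverse inclusion $\gamma^g\subset N_{D'}(\gamma)$ is then a short corollary: the approximate nearest-point projection $\pi:\gamma\to\gamma^g$ from Lemma~\ref{nearest_point_projection} is continuous up to $O(\delta)$ errors and its image contains the endpoints, so by a connectedness argument it covers all of $\gamma^g$ up to $O(\delta)$; combined with the first bound this controls $d_X(z,\gamma)$ for every $z\in\gamma^g$. Finally, for infinite or semi-infinite $\gamma$, one produces $\gamma^g$ as an Arzel\`a--Ascoli limit of genuine geodesics joining exhausting finite subsegments, using properness of $X$, and the uniform Hausdorff bound passes to the limit.

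The main obstacle I expect is setting up the exponential divergence lemma cleanly: one has to keep careful track of how the $\delta$-thinness constant propagates under repeated bisection of the geodesic, and relate the resulting exponential bound on path length to the linear quasigeodesic bound in a way that produces an \emph{explicit} constant $D=D(K,\epsilon,\delta)$.
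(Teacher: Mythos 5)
Your overall route -- tame the quasigeodesic, then play an exponential lower bound on the length of an excursion against the linear upper bound coming from quasigeodesity, then Arzel\`a--Ascoli for the infinite case -- is the standard Bridson--Haefliger argument and is genuinely different from the paper's, which never invokes exponential divergence: instead it steps along $\gamma^g$ in increments of $11\delta$ and uses the quadrilateral dichotomy of Example~\ref{exa:quadrilateral} together with Lemma~\ref{nearest_point_projection} to show the quasigeodesic must travel at least $2C-4\delta$ per step. However, as set up, your main step has a real gap. Your upper bound on the length of the excursion is $t_+-t_-\le K(d_X(\gamma(t_-),\gamma(t_+))+\epsilon)\le K(D+d(p_-',p_+')+\epsilon)$, which is linear in $D$ \emph{and} in $d(p_-',p_+')$, and nothing in your construction bounds $d(p_-',p_+')$ in terms of $D$: taking $t_\pm$ to be the nearest times at which the distance to $\gamma^g$ drops to $D/2$ gives no control whatsoever on $t_+-t_-$. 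A ``long, low'' excursion travelling at height between $D/2$ and $D$ above $\gamma^g$ for a horizontal distance that is exponentially large in $D$ satisfies your exponential lower bound and your upper bound simultaneously, so no contradiction is reached and $D$ is not bounded. (This long-low configuration is precisely what the paper's per-$11\delta$-step estimate kills: it makes the lower bound on length linear in $d(q,q')$ with slope proportional to $C$, which beats the slope-$K$ quasigeodesic bound once $C$ is large.) There is also a secondary error: $\gamma|_{[t_-,t_+]}$ is only known to avoid $N_{D/2}(\gamma^g)$, not $N_{D/2}$ of the geodesic $[\gamma(t_-),\gamma(t_+)]$ to which you apply the divergence lemma; indeed its endpoints lie \emph{on} that geodesic, so the hypothesis of the lemma fails outright, and knowing that $[\gamma(t_-),\gamma(t_+)]$ lies in $N_{D+O(\delta)}(\gamma^g)$ gives an inclusion in the wrong direction.

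The clean way to run your intended argument is to reverse the roles of the two curves: define $D:=\sup_{x\in\gamma^g}d_X(x,\gamma)$, let $x_0\in\gamma^g$ (approximately) realize it, and truncate at points $y,z\in\gamma^g$ on either side of $x_0$ with $d(y,x_0)=d(z,x_0)=2D$ (or at the endpoints). Compare the geodesic subsegment $[y,z]\subset\gamma^g$, which passes through $x_0$, with the path that climbs from $y$ to a nearest point $y'$ of $\gamma$, runs along $\gamma$ to a nearest point $z'$ of $z$, and descends to $z$. Every point of that path is at distance at least $D$ from $x_0$ (the climbing legs have length at most $D$ and start at distance $2D$ from $x_0$; the $\gamma$-portion by definition of $D$), and its length is at most $2D+K(6D+\epsilon)$ since $d(y',z')\le 6D$ -- crucially, linear in $D$ alone. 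Now your divergence estimate applied at the point $x_0\in[y,z]$ gives $D\le\delta\log_2(2D+K(6D+\epsilon))+O(1)$, which bounds $D$ explicitly. The containment of $\gamma$ in a neighborhood of $\gamma^g$ then follows by the connectedness argument you already sketch for the reverse inclusion, and your reduction to tame paths and the limiting argument for infinite $\gamma$ are fine as stated.
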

\begin{proof}
If $\gamma$ is noncompact, it can be approximated on compact subsets by finite segments 
$\gamma_i$. If we prove the lemma for finite segments, then a subsequence of the
$\gamma_i^g$, converging on compact subsets, will limit to $\gamma^g$ with the desired properties
(here is where we use properness of $X$). So it suffices to prove the lemma for
$\gamma$ a segment. 

In this case choose any $\gamma^g$ with the same endpoints as $\gamma$. We need to estimate
the Hausdorff distance from $\gamma$ to $\gamma^g$. Fix some constant $C$
and suppose there are points $p,p'$ on $\gamma$ that are both distance $C$ from
$\gamma^g$, but $d(r,\gamma^g)\ge C$ for all $r$ on $\gamma$ between $p$ and $p'$. Choose
$p_i$ a sequence of points on $\gamma$ and $q_i$ a sequence of
points on $\gamma^g$ closest to the $p_i$ so that $d(q_i,q_{i+1})=11\delta$.

Consider the quadrilateral $p_ip_{i+1}q_{i+1}q_i$. By Example~\ref{exa:quadrilateral}
either there are close points on $p_ip_{i+1}$ and $q_iq_{i+1}$, or close points
on $p_iq_i$ and $p_{i+1}q_{i+1}$ (or possibly both). Suppose there are
points $r_i$ on $p_iq_i$ and $r_{i+1}$ on $p_{i+1}q_{i+1}$ with
$d(r_i,r_{i+1})\le 2\delta$. Then any nearest point projections of $r_i$ and
$r_{i+1}$ to $\gamma^g$ must be at most distance $10\delta$ apart. But
$q_i$ and $q_{i+1}$ are such nearest point projections, by definition, and
satisfy $d(q_i,q_{i+1})=11\delta$. So it must be instead that there
are points $r_i$ on $p_ip_{i+1}$ and $s_i$ on $q_iq_{i+1}$ 
which are at most $2\delta$ apart. But this means that 
$d(p_i,p_{i+1})\ge 2C-4\delta$, so the length of $\gamma$ between $p$ and $p'$ is at
least $(2C-4)d(q,q')/11\delta$ where $q,q'$ are points on $\gamma$ closest to $p,p'$.
On the other hand, $d(p,p')\le 2C+d(q,q')$. Since $\gamma$ is a $K,\epsilon$
quasigeodesic, if $d(q,q')$ is big enough, we get a uniform bound on $C$ 
in terms of $K,\epsilon,\delta$. The remaining case where 
$d(q,q')$ is itself uniformly bounded but $C$ is unbounded quickly 
leads to a contradiction.
\end{proof}

\begin{corollary}\label{delta_hyperbolic_QI}
Let $Y$ be $\delta$-hyperbolic and let $f:X \to Y$ be a $K,\epsilon$ quasi-isometry. Then
$X$ is $\delta'$-hyperbolic for some $\delta$. Hence the property of being hyperbolic
is a quasi-isometry invariant.
\end{corollary}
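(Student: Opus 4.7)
The plan is to transfer the thin-triangle condition from $Y$ back to $X$, with the Morse lemma doing the heavy lifting. I will assume $X$ is itself a geodesic metric space (and both spaces are proper, so that Lemma~\ref{morse_lemma} applies), since these are the ambient hypotheses in the rest of the paper; the slight shift in constants to accommodate this is absorbed into the final $\delta'$.

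First I would take an arbitrary geodesic triangle in $X$ with vertices $x_1, x_2, x_3$ and sides $\gamma_{12}, \gamma_{23}, \gamma_{31}$, and push it forward via $f$. Each composite $f\circ\gamma_{ij}:[0,L_{ij}]\to Y$ is by definition a $K,\epsilon$ quasigeodesic in $Y$ (possibly discontinuous, but that is irrelevant to the Hausdorff-distance statement we need). Apply Lemma~\ref{morse_lemma} to produce, for each pair $(i,j)$, a genuine geodesic $\gamma^g_{ij}$ in $Y$ with the same endpoints $f(x_i), f(x_j)$, lying within Hausdorff distance $C = C(K,\epsilon,\delta)$ of $f(\gamma_{ij})$. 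The three geodesics $\gamma^g_{ij}$ form an honest geodesic triangle in $Y$ with vertices $f(x_1), f(x_2), f(x_3)$, and by hypothesis this triangle is $\delta$-thin.

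The third step is a double application of the Morse approximation. Fix a point $p\in \gamma_{12}$; then $f(p)$ lies within $C$ of some $q\in\gamma^g_{12}$, which in turn lies within $\delta$ of some $q'\in\gamma^g_{23}\cup\gamma^g_{31}$ by thinness of the geodesic triangle in $Y$. Say $q'\in\gamma^g_{23}$; the Morse lemma going the other way gives $p''\in\gamma_{23}$ with $d_Y(f(p''),q')\le C$. Altogether $d_Y(f(p),f(p''))\le \delta + 2C$, and the quasi-isometry inequality yields
$$d_X(p,p'') \le K\, d_Y(f(p),f(p'')) + K\epsilon \le K(\delta + 2C + \epsilon).$$
Thus every point of $\gamma_{12}$ is within distance $\delta' := K(\delta + 2C + \epsilon)$ of $\gamma_{23}\cup\gamma_{31}$; by symmetry the same bound applies to the other sides, so $X$ is $\delta'$-hyperbolic. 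Quasi-isometry invariance of hyperbolicity then follows by Lemma~\ref{qi_equivalence}.

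There is no real conceptual obstacle: the only mild subtlety is keeping straight that $f(\gamma_{ij})$ is a \emph{set} of points rather than a path, but we only invoke Hausdorff proximity, which is symmetric and involves no continuity. The quantitative dependence $\delta' = \delta'(K,\epsilon,\delta)$ is explicit once the Morse constant $C$ is, and I would not belabor the precise formula.
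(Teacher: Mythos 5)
Your argument is correct and is essentially identical to the paper's proof: push the triangle forward, replace the quasigeodesic edges by geodesics via the Morse lemma, use $\delta$-thinness in $Y$, and pull the bound back through the quasi-isometry inequality to get $\delta' = \delta'(K,\epsilon,\delta)$. The only differences are cosmetic (you spell out the intermediate points and state the final constant slightly more carefully than the paper does).
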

\begin{proof}
Let $\Gamma$ be a geodesic triangle in $X$ with vertices $a,b,c$. Then the
edges of $f(\Gamma)$ are $K,\epsilon$ quasigeodesics in $Y$, and are therefore within
Hausdorff distance $C$ of geodesics with the same endpoints. It follows that
every point on $f(ab)$ is within distance $2C+\delta$ of $f(ac)\cup f(bc)$
and therefore every point on $ab$ is within distance $K(2C+\delta)+\epsilon$
of $ac\cup bc$.
\end{proof}

The Morse Lemma lets us promote quasigeodesics to (nearby) geodesics. The
next lemma says that quasigeodesity is a {\em local} condition.

\begin{definition}
A path $\gamma$ in $X$ is a {\em $k$-local geodesic} if the subsegments of
length $\le k$ are geodesics. Similarly, $\gamma$ is a {\em $k$-local 
$K,\epsilon$ quasigeodesic} if the subsegments of length $\le k$ are
$K,\epsilon$ quasigeodesics.
\end{definition}

\begin{lemma}[$k$-local geodesics]\label{k_local_geodesics}
Let $X$ be a $\delta$-hyperbolic geodesic space, and let $k>8\delta$. Then
any $k$-local geodesic is $K,\epsilon$ quasigeodesic for $K,\epsilon$
depending explicitly on $\delta$.

More generally, for any $K,\epsilon$ there is a $k$ and constants $K',\epsilon'$
so that any $k$-local $K,\epsilon$ quasigeodesic is a $K',\epsilon'$ quasigeodesic. 
\end{lemma}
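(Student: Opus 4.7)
The plan is to establish that any $k$-local geodesic $\gamma$ from $x$ to $y$ with $k > 8\delta$ stays in a uniformly bounded neighborhood of any genuine geodesic $\gamma^g$ between $x$ and $y$, and has arc length controlled linearly by $d(x,y)$. Together these yield the $K,\epsilon$-quasigeodesic bound.

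The first step is to bound the maximum excursion $D := \max_t d(\gamma(t),\gamma^g)$ by a constant depending only on $\delta$. Let $p$ realize this maximum, and let $\alpha$ be a geodesic subsegment of $\gamma$ of length $8\delta < k$ centered at $p$ (shifted if $p$ is near an endpoint of $\gamma$, using that $d(\cdot,\gamma^g)$ vanishes at the endpoints). Writing $p_\pm$ for the endpoints of $\alpha$ and $q_\pm \in \gamma^g$ for their nearest-point projections, I would apply Example~\ref{exa:quadrilateral} to the geodesic quadrilateral $p_-p_+q_+q_-$. In the case where a point of $\alpha$ is within $2\delta$ of $[q_-,q_+]\subset \gamma^g$, the $1$-Lipschitz property of the distance function immediately forces $D \le 4\delta + 2\delta = 6\delta$. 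In the other case the ``short'' sides $[p_\pm,q_\pm]$ contain points within $2\delta$ of each other, and a triangle-inequality chase using $d(p_-,p_+) = 8\delta$ together with the coarse uniqueness of nearest projections from Lemma~\ref{nearest_point_projection} produces a uniform bound $D \le D_0(\delta)$.

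Once $D$ is bounded, the length bound is obtained from the approximate nearest-point projection $\pi \colon X \to \gamma^g$ of Lemma~\ref{nearest_point_projection}. On any length-$k$ geodesic subsegment of $\gamma$, the endpoints lie within $D$ of $\gamma^g$, so their $\pi$-images are separated by at least $k - 2D$ on $\gamma^g$. Because $\pi$ moves nearby points uniformly close together, $\pi\circ\gamma$ cannot backtrack too severely (a long backtrack would contradict the local geodesic condition on $\gamma$), so this local forward progress promotes to a global linear bound $\textnormal{length}(\gamma) \le C_1\, d(x,y) + C_2$. Combined with the trivial inequality $d(x,y) \le \textnormal{length}(\gamma)$, this gives the quasigeodesic estimate.

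For the more general second statement I would use the Morse lemma (Lemma~\ref{morse_lemma}) to replace each $K,\epsilon$-quasigeodesic subsegment of $\gamma$ of length $\le k$ by a genuine geodesic with the same endpoints, within Hausdorff distance $C = C(K,\epsilon,\delta)$. This reduces $\gamma$ (up to uniformly bounded perturbations) to a $(k-2C)$-local honest geodesic, to which the previous argument applies, and absorbing $C$ into the final additive constants gives the desired $K',\epsilon'$. The main obstacle is the second quadrilateral-thinness case in bounding $D$: here closeness occurs between the short sides $[p_\pm,q_\pm]$ rather than between $\alpha$ and $\gamma^g$, and one needs to track the projection behaviour of interior points of those sides onto $\gamma^g$. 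The rest is bookkeeping, together with the routine but fiddly handling of $\gamma$'s endpoints (where the length-$8\delta$ window around $p$ cannot be centered symmetrically).
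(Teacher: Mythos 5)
Your overall architecture (bound the excursion $D$ from $\gamma^g$, then use the near-projection $\pi$ to get linear progress, then reduce the general case to the geodesic case via the Morse lemma) is the same as the paper's, but there is a genuine gap at the step you yourself flag as ``the main obstacle,'' and the repair you sketch will not work. In the second case of the quadrilateral dichotomy --- where the short sides $[p_-,q_-]$ and $[p_+,q_+]$ contain $2\delta$-close points --- no triangle-inequality chase can bound $D$, because that configuration is \emph{exactly} what occurs when a length-$8\delta$ geodesic arc sits far from $\gamma^g$: in $\H^2$, take $\alpha$ of length $8\delta$ centered at a point at distance $D\gg\delta$ from a line $\gamma^g$ and tangent to the equidistant curve there; then $q_-$ and $q_+$ are within $O(\delta e^{-D})$ of each other, the two projection segments fellow-travel, the quadrilateral is thin ``the other way,'' and $D$ is arbitrary. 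So the local data around $\alpha$ alone is consistent with unbounded $D$; what rules it out is the \emph{global} maximality of $p$ over all of $\gamma$, and your argument never uses it in this case.

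Two changes fix this, and they are what the paper does. First, you need the pointwise form of quadrilateral thinness (cut the quadrilateral $p_-p_+q_+q_-$ into two triangles): the specific point $p$ is within $2\delta$ of the union of the three sides other than $\alpha$, not merely ``some points on opposite sides are close'' as in Example~\ref{exa:quadrilateral}. Second, if $p$ is within $2\delta$ of a point $w$ on, say, $[p_-,q_-]$, then since $d(p_-,w)\ge d(p_-,p)-2\delta$ you get
$$d(p,\gamma^g)\le d(p,w)+d(w,q_-)\le 2\delta + \bigl(d(p_-,\gamma^g)-d(p_-,p)+2\delta\bigr)= d(p_-,\gamma^g)-\bigl(d(p_-,p)-4\delta\bigr),$$
which contradicts $d(p,\gamma^g)\ge d(p_-,\gamma^g)$ as soon as the half-window exceeds $4\delta$ --- this is precisely where the hypothesis $k>8\delta$ enters (take the window of length $k$ rather than exactly $8\delta$ to make the inequality strict). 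This forces the first case, hence $D\le 6\delta$ as you computed. The remainder of your outline (forward progress of $\pi$ on length-$k$ windows, and the Morse-lemma reduction for the second statement) matches the paper and is fine.
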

\begin{proof}
Let $\gamma$ be a $k$-local geodesic segment from $p$ to $q$, and let
$\gamma^g$ be any geodesic from $p$ to $q$. Let $r$ be a point on $\gamma$
furthest from $\gamma^g$, and let $r$ be the midpoint of an arc $r'r''$
of $\gamma$ of length $8\delta$. By hypothesis, $r'r''$ is actually a geodesic.
Let $s'$ and $s''$ be points on $\gamma^g$ closest to $r'$ and $r''$. The point
$r$ is within distance $2\delta$ either of $\gamma^g$ or of one of the sides
$r's'$ or $r''s''$. If the latter, we would get a path from $r$ to $s'$ or $s''$ 
shorter than the distance from $r'$ or $r''$, contrary to the definition of $r$. 
Hence the distance from $r$ to $\gamma^g$ is at most $2\delta$, and therefore
$\gamma$ is contained in the $2\delta$ neighborhood of $\gamma^g$.

Now let $\pi:\gamma \to \gamma^g$ take points on $\gamma$ to closest points on
$\gamma^g$. Since $\pi$ moves points at most $2\delta$, it is approximately
continuous. Since $\gamma$ is a $k$-local geodesic, the map $\pi$ is approximately
monotone; i.e.\/ if $p_i$ are points on $\gamma$ with $d(p_i,p_{i+1})=k$ moving
monotonely from one end of $\gamma$ to the other, then $d(\pi(p_i),\pi(p_{i+1}))\ge k-4\delta$ 
and the projections also move monotonely
along $\gamma$. In particular, $d(p_i,p_j)\ge (k-4\delta)|i-j|$ and $\pi$ is
a quasi-isometry. The constants involved evidently depend only on $\delta$ and $k$,
and the multiplicative constant evidently goes to $1$ as $k$ gets large.

\medskip

The more general fact is proved similarly, by using Lemma~\ref{morse_lemma} to promote
local quasigeodesics to local geodesics, and then back to global quasigeodesics.
\end{proof}

\subsection{Hyperbolic groups}

Corollary~\ref{delta_hyperbolic_QI} justifies the following definition:

\begin{definition}
A group $G$ is hyperbolic if $C_S(G)$ is $\delta$-hyperbolic
for some $\delta$ for some (and hence for any) finite generating set $S$.
\end{definition}

\begin{example}
Free groups are hyperbolic, since their Cayley graphs (with respect to
a free generating set) are trees which are $0$-hyperbolic.
\end{example}

\begin{example}
Virtually free groups, being precisely the groups quasi-isometric to trees, are
hyperbolic. A group quasi-isometric to a point or to $\R$ is finite or
virtually $\Z$ respectively; such groups are called {\em elementary hyperbolic
groups}; all others are {\em nonelementary}.
\end{example}

\begin{example}
Fundamental groups of closed surfaces with negative Euler characteristic are hyperbolic.
By the uniformization theorem, each such surface can be given a hyperbolic metric,
exhibiting $\pi_1$ as a cocompact group of isometries of the hyperbolic plane.
\end{example}

\begin{example}
A {\em Kleinian group} is a finitely generated discrete subgroup
of the group of isometries of hyperbolic $3$-space. A Kleinian group $G$ is
is {\em convex cocompact} if it acts cocompactly on the convex hull of
its limit set (in the sphere at infinity). Such a convex hull is $\CAT(-1)$, so a convex cocompact
Kleinian group is hyperbolic. See e.g.\/ \cite{Maskit} for an introduction to
Kleinian groups.
\end{example}

\begin{lemma}[invariant quasiaxis]\label{invariant_quasiaxis}
Let $G$ be hyperbolic. Then there are finitely many conjugacy classes of torsion
elements (and therefore a bound on the order of the torsion) and there are
constants $K,\epsilon$ so that for any nontorsion
element $g$ there is a $K,\epsilon$ quasigeodesic $\gamma$ invariant under
$g$ on which $g$ acts as translation.
\end{lemma}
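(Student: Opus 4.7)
The plan splits into the torsion-finiteness claim and the quasi-axis construction. For the finiteness of torsion conjugacy classes, I would use that any bounded subset of a $\delta$-hyperbolic space admits a \emph{quasi-center} (a point whose distance to the farthest point of the set exceeds the circumradius by $O(\delta)$), and that such a quasi-center is unique up to bounded distance by a standard thin-triangle argument. For a torsion element $g$, the orbit $\langle g\rangle\cdot 1$ is finite, hence bounded, so its quasi-center is moved by $g$ a distance at most $K_0=K_0(\delta)$. Rounding the center to a nearest vertex $y\in G$ and conjugating $g$ by $y^{-1}$ produces a conjugate of word length at most $K_0+1$; since the ball $B(1,K_0+1)$ in $C_S(G)$ is finite, this yields finitely many conjugacy classes of torsion elements, and hence a uniform bound on the torsion order.

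For the quasi-axis, let $g$ be non-torsion, pick $x_0\in G$ realizing $L:=\min_{x\in G} d(x,gx)$ (attained by discreteness of $G$, and with $L>0$ since $g\neq 1$), and let $\sigma$ be a geodesic from $x_0$ to $gx_0$. Form the $g$-equivariant bi-infinite path $\Sigma:=\bigcup_{n\in\Z} g^n\sigma$, on which $g$ acts by arclength translation $L$. The plan is to verify that $\Sigma$ is a $k$-local geodesic for some $k$ large relative to $\delta$, so that Lemma~\ref{k_local_geodesics} promotes it to a $K,\epsilon$-quasigeodesic with constants depending only on $\delta$. Local geodesity away from the corners is automatic since each $g^n\sigma$ is a geodesic of length $L$, so the whole issue is to rule out shortcuts across the corners $g^nx_0$.

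Ruling out corner shortcuts is the main obstacle. If there were a substantial shortcut at a corner --- points $p\in g^{n-1}\sigma$ and $q\in g^n\sigma$ near $g^nx_0$ with $d(p,q)$ much less than the length along $\Sigma$ --- then a thin-polygon analysis of $\Delta(g^{n-1}x_0,g^nx_0,g^{n+1}x_0)$, combined with rounding to the nearest vertex of the midpoint of the shortcut chord, produces $y\in G$ satisfying $d(y,gy)<L$, contradicting minimality of $L$ at $x_0$. The remaining subtlety is the case where $L$ itself is smaller than the threshold required by Lemma~\ref{k_local_geodesics}; I would handle this by first showing $|g^n|\to\infty$ for non-torsion $g$ (otherwise $\{g^n\}$ would be bounded, hence finite in the discrete group $G$, forcing $g^{n-m}=1$) and then constructing the quasi-axis for a power $g^N$ with $L(g^N)$ above the threshold, noting that $g$ centralizes $g^N$ and so must permute a bounded Hausdorff neighborhood of that quasi-axis, which yields a $g$-invariant quasigeodesic with constants inherited from Lemma~\ref{k_local_geodesics}.
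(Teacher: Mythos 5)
Your torsion argument (quasi-centers of the finite orbit $\langle g\rangle\cdot\id$) is a valid alternative to the paper's route, which instead deduces from the axis dichotomy that every torsion element has minimal displacement $\le 8\delta$ and then invokes finiteness of the ball. Your construction of $\Sigma=\bigcup g^n\sigma$ in the case $L>8\delta$ is also essentially the paper's; note though that local geodesity at the corners follows from a pure triangle-inequality computation ($d(p,q)\ge d(g^{-1}q,q)-d(g^{-1}q,p)\ge L-(L-a-b)=a+b$, using that $g^{-1}q$ lies on the same geodesic segment as $p$ and that every point is displaced at least $L$), with no thin-polygon analysis or rounding needed.

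The genuine gap is in the case $L\le 8\delta$ with $g$ non-torsion. You propose to find $N$ with $L(g^N)$ above the threshold, and you justify this only by showing $|g^n|\to\infty$. But $|g^N|=d(\id,g^N\id)$ controls the displacement of the basepoint, not the \emph{minimal} displacement $L(g^N)=\min_x d(x,g^Nx)$, which equals the shortest word length in the conjugacy class of $g^N$. An isometry can have unbounded orbits while every power has arbitrarily small minimal displacement --- this is exactly the behavior of a parabolic isometry of $\H^3$, and the paper explicitly warns (right after this lemma) that $\Z\oplus\Z$ acts freely on a proper $\delta$-hyperbolic space in just this way. Ruling out this ``parabolic'' behavior inside a hyperbolic \emph{group} is the actual content of the hard case, and your proposal assumes it rather than proves it. The paper's argument instead takes a geodesic $\gamma$ from $h$ to $g^Nh$ with $N\gg n$, uses thin quadrilaterals to find a long middle segment $\sigma\subset\gamma$ on which $\gamma$ and $g^i\gamma$ $2\delta$-fellow-travel for all $0\le i\le n$, concludes that $p,gp,\dots,g^np$ is a $K,\epsilon$ quasigeodesic with constants independent of $n$ and $g$, and then promotes the bi-infinite orbit from an $(nC)$-local quasigeodesic to a global one via Lemma~\ref{k_local_geodesics}. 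A secondary issue: even granting a $g^N$-axis, saying that $g$ ``permutes a bounded Hausdorff neighborhood'' of it does not by itself produce a $g$-invariant quasigeodesic on which $g$ acts as translation with uniform constants; you would still need a lower bound on $d(p,g^ip)$ linear in $i$, which is again what the paper's fellow-traveling argument supplies.
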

\begin{proof}
Let $g\in G$ be given. Consider the action of $g$ on the Cayley graph $C_S(G)$.
The action is simplicial, so $p \to d(p,gp)$ has no strict local minima in the 
interior of edges, 
and takes integer values at the vertices (which correspond to elements of $G$).
It follows that
there is some $h$ for which $d(h,gh)$ is minimal, and we can take $h$ to be
an element of $G$ (i.e.\/ a vertex).
If $d(h,gh)=k>8\delta$ then we can join $h$ to $gh$
by a geodesic $\sigma$ and let $\gamma=\cup_i g^i\sigma$. Note that $g$ acts
on $\gamma$ by translation through distance $k$; since this is the minimum
distance that $g$ moves points of $G$, it follows that $\gamma$ is a
$k$-local geodesic (and therefore a $K,\epsilon$ quasigeodesic by 
Lemma~\ref{k_local_geodesics}). Note in this case that $g$ has infinite order.

Otherwise there is $h$ moved a least distance by $g$ so 
that $d(h,gh)\le 8\delta$. Since $G$ acts cocompactly on itself, 
there are only finitely many conjugacy classes of elements that move some point
any uniformly bounded distance, so if $g$ is torsion we are done.
If $g$ is not torsion, its orbits are proper, so for any $T$ 
there is an $N$ so that $d(h,g^Nh)>T$; choose $T$ (and $N$) much bigger than some fixed
(but big) $n$. Let $\gamma$ be a geodesic from $h$ to $g^Nh$. Then for any $0\le i\le n$
the geodesic $g^i\gamma$ has endpoints within distance $8\delta n$ of the endpoints of
$\gamma$. On the other hand, $|\gamma|=T\gg8\delta n$ so $\gamma$ contains a segment
$\sigma$ of length at least $T-16\delta n - O(\delta)$ such that $g^i\sigma$ is
contained in the $2\delta$ neighborhood of $\gamma$ for $0\le i \le n$.
To see this, consider the quadrilateral with successive
vertices $h$, $g^Nh$, $g^{i+N}h$ and $g^ih$. Two nonadjacent sides must contain
points which are at most $2\delta$ apart. Since $N\gg i$, the sides must be
$\gamma$ and $g^i\gamma$. We find $\sigma$ and $g^i\sigma$ in the region where these
two geodesics are close.

Consequently, for any $p \in \sigma$ the sequence $p,gp,\cdots, g^np$
is a $K,\epsilon$ quasigeodesic for some uniform $K,\epsilon$ {\em independent of $n$}.
In particular there is a constant $C$ (independent of $n$)
so that $d(p,g^ip)\ge iC$ for $0\le i\le n$, and therefore
the infinite sequence $g^ip$ for $i\in \Z$
is an $(nC)$-local $K,\epsilon$ quasigeodesic. Since $K,\epsilon$ is fixed, if $n$ is big
enough, this infinite sequence is an honest $K',\epsilon'$ quasigeodesic invariant
under $g$, by Lemma~\ref{k_local_geodesics}.
Here $K',\epsilon'$ depends only on $\delta$ and $G$, and not on $g$.
\end{proof}

Lemma~\ref{invariant_quasiaxis} can be weakened considerably, and it is frequently
important to study actions which are not necessarily cocompact 
on $\delta$-hyperbolic spaces which are not necessarily proper.
The quasigeodesic $\gamma$ invariant under $g$ is called a {\em quasiaxis}. 
Quasiaxes in $\delta$-hyperbolic spaces are (approximately) {\em unique}:

\begin{lemma}
Let $G$ be hyperbolic, and let $g$ have infinite order. Let $\gamma$ and
$\gamma'$ be $g$-invariant $K,\epsilon$ quasigeodesics (i.e.\/ quasiaxes for $g$).
Then $\gamma$ and $\gamma'$ are a finite Hausdorff distance apart, and this
finite distance depends only on $K,\epsilon$ and $\delta$.
Consequently the centralizer $C(g)$ is virtually $\Z$.
\end{lemma}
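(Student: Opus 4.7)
The plan is to prove the Hausdorff-distance bound first and then deduce the centralizer statement from it.

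For the bound, the idea is that $g$-invariance automatically produces infinitely many pairs $(p_n, q_n) \in \gamma \times \gamma'$ at a common distance $D$, while forcing their separation along each axis to grow linearly; thinness of a long geodesic quadrilateral then forces $D$ to be bounded in terms of $K, \epsilon, \delta$ alone. Concretely, fix $p \in \gamma$, choose $q \in \gamma'$ with $D := d(p,q) = d(p, \gamma')$, and set $p_n := g^n p$, $q_n := g^n q$. Since $g$ is an isometry preserving both quasiaxes, $d(p_n, q_n) = D$ for every $n \in \Z$. Because $g$ has infinite order, it translates $\gamma$ by some parameter length $T > 0$ (a translation length of $0$ would force $g$ to fix a vertex of $C_S(G)$, contradicting proper discontinuity together with infinite order), and the quasigeodesic bound gives $d(p_0, p_{\pm n}) \ge K^{-1}|n|T - \epsilon \to \infty$.

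Now I would form the geodesic quadrilateral with corners $p_{-n}, p_n, q_n, q_{-n}$. By Lemma~\ref{morse_lemma}, the two long sides $[p_{-n}, p_n]$ and $[q_{-n}, q_n]$ lie within Hausdorff distance $C = C(K, \epsilon, \delta)$ of the corresponding subarcs of $\gamma$ and $\gamma'$. Cutting along a diagonal and applying $\delta$-thinness to each of the two resulting triangles shows that any $r$ on $[p_{-n}, p_n]$ with $d(r, p_{\pm n}) > D + 2\delta$ must lie within $2\delta$ of $[q_{-n}, q_n]$, since each short side has length only $D$ and cannot reach such an $r$. For $n$ large, the Morse lemma provides $r \in [p_{-n}, p_n]$ with $d(r, p_0) \le C$, and then $d(r, p_{\pm n}) \ge K^{-1} n T - \epsilon - C > D + 2\delta$; chaining these estimates yields $d(p_0, \gamma') \le 2C + 2\delta$. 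By symmetry and the arbitrariness of $p \in \gamma$, the Hausdorff distance of $\gamma, \gamma'$ is at most $H := 2C + 2\delta$, depending only on $K, \epsilon, \delta$.

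For the centralizer claim, observe that for $h \in C(g)$ one has $g(h\gamma) = h(g\gamma) = h\gamma$, so $h\gamma$ is another $g$-invariant $K,\epsilon$-quasiaxis, and the first part forces $hp \in N_H(\gamma)$. Since $g$ translates $\gamma$ by $T > 0$, I can choose $k \in \Z$ so that $g^k hp$ lies within some uniform $R = R(H, K, \epsilon, T)$ of $p$; proper discontinuity of the $G$-action on $C_S(G)$ then leaves only finitely many such group elements, whence $[C(g) : \langle g \rangle] < \infty$ and $C(g)$ is virtually $\Z$.

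The main obstacle I anticipate is the middle-of-the-quadrilateral step: one must confirm that the linear lower bound on $d(p_0, p_{\pm n})$---which rests entirely on $g$ having infinite order through $T > 0$---really does outpace $D + 2\delta + C$ for some $n$, allowing $n$ to be chosen large enough to trap the projection of $p_0$ in the opposite long side. This is precisely where the infinite-order hypothesis on $g$ enters, and it is what prevents the argument from collapsing when $D$ is a priori unbounded.
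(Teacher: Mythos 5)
Your proposal is correct and takes essentially the same route as the paper: produce corresponding points on the two axes at bounded distance over an arbitrarily long stretch, use thinness of the resulting long quadrilateral together with the Morse lemma to make the bound depend only on $K,\epsilon,\delta$, and then note that each $h\in C(g)$ moves a quasiaxis within bounded Hausdorff distance of itself, so that properness of the action bounds the index of $\langle g\rangle$ in $C(g)$. The only cosmetic differences are that you get the uniform bound in one step by translating a single nearest-point pair by powers of $g$ (the paper first establishes finiteness of the Hausdorff distance via cocompactness of the $\langle g\rangle$-action and then uniformizes), and you count cosets of $\langle g\rangle$ explicitly rather than observing that $C(g)$ is quasi-isometric to $\R$.
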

\begin{proof}
Let $p\in\gamma$ and $p'\in\gamma'$ a closest point to $p$. 
Since $g$ acts on both $\gamma$ and $\gamma'$
cocompactly, there is a constant $C$ so that every point in $\gamma$ or $\gamma'$
is within $C$ from some point in the orbit of $p$ or $p'$. This implies that
the Hausdorff distance from $\gamma$ to $\gamma'$ is at most $2C+d(p,p')$; in particular,
this distance is finite.

Pick two points on $\gamma$ 
very far away from each other; each is distance at most $2C+d(p,p')$ from $\gamma'$, 
and therefore most of the geodesic between them is within
distance $2\delta$ of the geodesic between corresponding points on $\gamma'$. But
$\gamma$ and $\gamma'$ are themselves $K,\epsilon$ quasigeodesic, and therefore uniformly
close to these geodesics. Hence some points on $\gamma$ are within a uniformly
bounded distance of $\gamma'$, and therefore all points on $\gamma$ are.

If $h$ commutes with $g$, then $h$ must permute the quasiaxes of $g$. Therefore
$h$ takes points on any quasiaxis $\gamma$ for $g$
to within a bounded distance of $\gamma$. Hence $C(g)$, thought of as a subset of $G$, is
quasiisometric to a quasiaxis (that is to say, to $\R$), and is therefore virtually $\Z$. 
\end{proof}

This shows that a hyperbolic group cannot contain a copy of $\Z\oplus\Z$ 
(or, for that matter, the fundamental group of a Klein bottle). 
This is more subtle than it might seem;
$\Z\oplus\Z$ {\em can} act freely and properly discontinuously by isometries on a proper
$\delta$-hyperbolic space --- for example, as a parabolic subgroup of the isometries
of $\H^3$.

\begin{example}
If $M$ is a closed $3$-manifold, then $\pi_1(M)$ is hyperbolic if and only if it does not
contain any $\Z \oplus \Z$ subgroup. Note that this includes the possibility that
$\pi_1(M)$ is elementary hyperbolic (for instance, finite).
This follows from Perelman's Geometrization Theorem
\cite{Perelman_1,Perelman_2}.
\end{example}

If $g$ is an isometry of any metric space $X$, the {\em translation length} of $g$ is
the limit $\tau(g):=\lim_{n \to \infty} d_X(p,g^np)/n$ for some $p\in X$. The triangle
inequality implies that the limit exists and is independent of the choice of $p$. Moreover,
from the definition, $\tau(g^n)=|n|\tau(g)$ and $\tau(g)$ is a conjugacy invariant.

Lemma~\ref{invariant_quasiaxis} implies that for $G$ acting on itself, $\tau(g)=0$ if and
only if $g$ has finite (and therefore bounded) order. Consequently a hyperbolic
group cannot contain a copy of a Baumslag--Solitar group; i.e.\/ a group
of the form $BS(p,q):=\langle a,b \; | \; ba^pb^{-1}=a^q\rangle$. For, we have
already shown hyperbolic groups do not contain $\Z\oplus\Z$, and this rules out the case
$|p|=|q|$, and if $|p|\ne|q|$ then for any isometric action of $BS(p,q)$ 
on a metric space, $\tau(a)=0$.

By properness of $C_S(G)$ and the Morse Lemma, there is a constant $N$ so that
for any $g\in G$ the power $g^N$ has an invariant {\em geodesic} axis on which it acts
by translation. It follows that $\tau(g)\in \Q$, and in fact $\in \frac 1 N\Z$;
this cute observation is due to Gromov \cite{Gromov_hyperbolic}.

\subsection{The Gromov boundary}

Two geodesic rays $\gamma,\gamma'$ in a metric space $X$ are {\em asymptotic} if they
are a finite Hausdorff distance apart. The property of being asymptotic is an equivalence
relation, and the set of equivalence classes is the {\em Gromov boundary}, and denoted
$\partial_\infty X$. If $X$ is proper and $\delta$-hyperbolic,
and $x$ is any basepoint, then
every equivalence class contains a ray starting at $x$. For, if $\gamma$ is a
geodesic ray, and $g_i \in \gamma$ goes to infinity, then by properness,
any collection of geodesics $xg_i$ contains a subsequence which 
converges on compact subsets to a ray $\gamma'$. By $\delta$-thinness each of the triangles
$xg_0g_i$ is contained in a uniformly bounded neighborhood of $\gamma$, so the same
is true of $\gamma'$; in particular, $\gamma'$ 
is asymptotic to $\gamma$. We give $\partial_\infty X$ the topology of convergence
on compact subsets of equivalence classes. That is, $\gamma_i \to \gamma$ if and only if 
every subsequence of the $\gamma_i$ contains a further subsequence 
whose equivalence classes have representatives that converge 
on compact subsets to some representative of the equivalence class of $\gamma$.

\begin{lemma}\label{boundary_compact}
Let $X$ be a $\delta$-hyperbolic proper geodesic metric space. Then $\partial_\infty X$ is
compact.
\end{lemma}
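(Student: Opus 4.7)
The plan is to show sequential compactness via Arzel\`a--Ascoli applied to rays based at a fixed point, then conclude compactness from metrizability (or sequential compactness directly, since the topology is first countable from its very definition in terms of convergence on compact subsets).

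First I would fix a basepoint $x\in X$, and use the observation made in the paragraph preceding the lemma to choose, for each equivalence class $[\gamma]\in\partial_\infty X$, a representative geodesic ray $\gamma:[0,\infty)\to X$ with $\gamma(0)=x$. Given any sequence of classes $[\gamma_i]$, I would take such based representatives $\gamma_i$. Each $\gamma_i$ is a $1$-Lipschitz map from $[0,\infty)$ to $X$, and $\gamma_i(t)$ lies in the closed ball of radius $t$ around $x$, which is compact because $X$ is proper. Hence the family $\{\gamma_i\}$ is equicontinuous and pointwise relatively compact.

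Next I would apply Arzel\`a--Ascoli on each interval $[0,n]$ and diagonalize to extract a subsequence (still denoted $\gamma_i$) that converges uniformly on compact subsets to some $1$-Lipschitz map $\gamma:[0,\infty)\to X$ with $\gamma(0)=x$. The crucial point — the step I expect to be the main thing to check, though it is not deep — is that $\gamma$ is itself a geodesic ray. This follows because geodicity is a pointwise metric condition preserved under pointwise limits: for any $s,t\ge 0$,
$$d_X(\gamma(s),\gamma(t)) = \lim_{i\to\infty} d_X(\gamma_i(s),\gamma_i(t)) = |s-t|,$$
using that each $\gamma_i$ is a geodesic. Hence $\gamma$ represents a class $[\gamma]\in\partial_\infty X$.

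Finally, by the very definition of the topology on $\partial_\infty X$, the fact that the representatives $\gamma_i$ converge to $\gamma$ on compact subsets gives $[\gamma_i]\to[\gamma]$. Thus every sequence in $\partial_\infty X$ has a convergent subsequence. Since the topology on $\partial_\infty X$ is first countable (a neighborhood basis of $[\gamma]$ is given by sets specifying the based representatives to within $1/n$ on $[0,n]$), sequential compactness implies compactness, completing the proof.
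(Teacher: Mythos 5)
Your argument is essentially the paper's own proof, which simply says: take based representatives of the given classes and use properness to extract a subsequence converging on compact subsets. You have just filled in the details the paper leaves implicit (Arzel\`a--Ascoli plus diagonalization, the check that the limit is a geodesic ray, and the passage from sequential compactness to compactness, which the paper also glosses over and which is justified since $\partial_\infty X$ is metrizable).
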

\begin{proof}
If $\gamma_i$ is any sequence of rays, and $\gamma_i'$ is an equivalent sequence starting
at a basepoint $x$, then by properness $\gamma_i'$ has a subsequence which 
converges on compact subsets.
\end{proof}
In fact, we can define a (compact) topology on $\overline{X}:=X\cup \partial_\infty X$
by saying that $x_i \to \gamma$ if and only if every subsequence of a sequence of
geodesics $xx_i$ contains a further subsequence which converges on compact subsets to a
representative of $\gamma$. With this topology, $\overline{X}$ is compact, 
$\partial_\infty X$ is closed in $\overline{X}$, and the inclusion
of $X$ into $\overline{X}$ is a homeomorphism onto its image.

\medskip

A bi-infinite geodesic $\gamma$ determines two (distinct) points in $\partial_\infty X$;
we call these the {\em endpoints} of $\gamma$. Two geodesics with the same
(finite or infinite) endpoints are Hausdorff distance at most $2\delta$ apart.
Conversely, any two distinct points in
$\partial_\infty X$ are spanned by an infinite geodesic $\gamma$. For, if $\gamma_1,\gamma_2$
are two infinite rays (starting at a basepoint $x$ for concreteness), and $g_i,h_i$
are points on $\gamma_1,\gamma_2$ respectively going to infinity, some point $p_i$ on any
geodesic $g_ih_i$ is within $\delta$ of both $xg_i$ and $xh_i$, and if $p_i \to \infty$
then $\gamma_1$ and $\gamma_2$ would be a finite Hausdorff distance apart. Otherwise some
subsequence of the $p_i$ converges to $p$, and the geodesics $g_ih_i$ converge on compact
subsets to a (nonempty!) bi-infinite geodesic $\gamma$ through $p$ asymptotic to both
$\gamma_1$ and $\gamma_2$. Evidently, geodesic triangles with some or all endpoints at infinity
are $\delta'$-thin for some $\delta'$ depending only on $\delta$ (one can take $\delta'=20\delta$).
By abuse of notation, in the sequel we will call a metric space $\delta$-hyperbolic 
if all geodesic triangles --- even those with some endpoints at infinity --- are $\delta$-thin.

Let $X,Y$ be hyperbolic geodesic metric spaces. Then any
quasi-isometric map $\phi:X \to Y$ extends uniquely to a 
{\em continuous} map $\partial_\infty X \to \partial_\infty Y$. 
In particular, the Gromov boundary $\partial_\infty X$ depends (up to homeomorphism)
only on the quasi-isometry type of $X$, and $\QI(X)$ acts
on $\partial_\infty X$ by homeomorphisms.

If $G$ is a hyperbolic group, we define $\partial_\infty G$ to be the Gromov boundary of
some (any) $C_S(G)$. 

\begin{example}
\begin{figure}[ht]
\centering
\labellist
\small\hair2pt
\endlabellist
\includegraphics[scale=1]{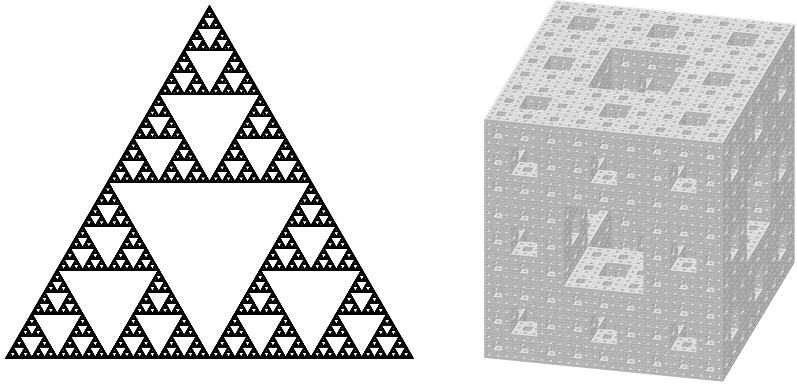}
\caption{The Sierpinski carpet and the Menger sponge.}
\label{fig:carpet_and_sponge}
\end{figure}
If $G$ is free, $\partial_\infty G$ is a Cantor set. If $G$ is a $\pi_1$ of a closed surface with
negative Euler characteristic, $\partial_\infty G$ is a circle. If $G$ is a convex cocompact 
Kleinian group, $\partial_\infty G$ is homeomorphic to the limit set.
For example, if $G$ is the 
fundamental group of a hyperbolic $3$-manifold with totally geodesic boundary,
$\partial_\infty G$ is a Sierpinski carpet.

In fact, a theorem of Kapovich--Kleiner \cite{Kapovich_Kleiner} says that if $G$
is a hyperbolic group which does not split over a finite or virtually cyclic subgroup,
and if $\partial_\infty G$ is $1$-dimensional (in the topological sense of dimension),
then $\partial_\infty G$ is homeomorphic to the circle, the Sierpinski carpet, 
or the Menger sponge.
\end{example}

Evidently, $\partial_\infty G$ is empty if and only if $G$ is finite, and
if $\partial_\infty G$ is nonempty, it has at least two points, and has exactly two points
if and only if $G$ is itself quasi-isometric to the geodesic joining these two points, which 
holds if and only if $G$ is virtually $\Z$.

If $g\in G$ has infinite order, a quasiaxis $\gamma$ is asymptotic
to two points $p^\pm \in \partial_\infty G$. Under (positive) powers of $g$, points stay 
a constant distance from $\gamma$, and move towards one of the endpoints, say $p^+$.
As homeomorphisms from $\overline{X}$ to itself, the elements $g^n$ with $n\to \infty$
converge uniformly (in the compact-open topology) on $\overline{X}-p^-$ to the constant 
map to $p^+$. We call $p^+$ the {\em attracting} endpoint and $p^-$ the {\em repelling}
endpoint of $g$; the actions of $g$ on $\partial_\infty G$ is sometimes expressed by saying
that it has {\em source-sink dynamics}.

\begin{example}\label{free_from_pingpong}
Let $g,h \in G$ be of infinite order, with quasiaxes $\gamma$ and $\gamma'$. If $\gamma$ and
$\gamma'$ share an endpoint (without loss of generality the attracting endpoint of each) and
$p$ is close to both $\gamma$ and $\gamma'$, then there are $n_i,m_i \to \infty$ for which
$d(h^{-m_i}g^{n_i}p,p)$ is bounded. Since the action of $G$ on its Cayley graph is properly
discontinuously, it follows that there are distinct $i,j$ with $h^{-m_i}g^{n_i}=h^{-m_j}g^{n_j}$
so that $h^m=g^n$ for some positive $n,m$. In particular,
in this case $g$ and $h$ together generate a virtual $\Z$ subgroup, and their quasiaxes have the
same endpoints. Otherwise the endpoints are disjoint, and because of the source-sink dynamics, 
Klein's pingpong argument implies that sufficiently large powers $g^n,h^m$ generate a 
(nonabelian) {\em free} subgroup of $G$.
\end{example}

\begin{lemma}\label{action_minimal}
Suppose $G$ is nonelementary. Then the action of $G$ on $\partial_\infty G$ is minimal;
i.e.\/ every orbit is dense. Consequently $\partial_\infty G$ is infinite and perfect.
\end{lemma}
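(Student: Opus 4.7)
The plan is to prove minimality of the action directly, then deduce that $\partial_\infty G$ is infinite and perfect as formal consequences. Since $G$ is nonelementary, Example~\ref{free_from_pingpong} furnishes two infinite-order elements $a,b \in G$ whose attracting and repelling fixed points $a^\pm, b^\pm \in \partial_\infty G$ form four distinct points. I will show that for any $\xi, \xi' \in \partial_\infty G$ there exists $h \in G$ with $h\xi$ arbitrarily close to $\xi'$.

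First I would pick $h_n \in G$ with $h_n \to \xi'$ in $\overline{C_S(G)}$ (for instance, the consecutive vertices along a geodesic ray from $e$ to $\xi'$), and, using compactness of the boundary (Lemma~\ref{boundary_compact}), pass to a subsequence along which $h_n^{-1} \to \eta$ for some $\eta \in \partial_\infty G$. The core step is the following convergence-at-infinity lemma: \emph{if $|h_n| \to \infty$, $h_n \to \xi'$, $h_n^{-1} \to \eta$, and $p \in \overline{C_S(G)}$ with $p \neq \eta$, then $h_n p \to \xi'$}. To prove this, consider the (possibly ideal) triangle with vertices $e$, $h_n$, $h_n p$; by $\delta'$-thinness in the extended sense allowing ideal vertices, the geodesic from $e$ to $h_n p$ shadows the geodesic from $e$ to $h_n$ for a distance comparable to the Gromov product $(h_n \mid h_n p)_e$, which by isometric invariance equals $(e \mid p)_{h_n^{-1}} \approx d(h_n^{-1}, [e,p])$. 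Since $h_n^{-1} \to \eta \neq p$, this distance tends to infinity, so the initial segments of $[e, h_n p]$ fellow-travel $[e, h_n]$ for longer and longer, forcing $h_n p \to \xi'$. Applying this with $p = \xi$ finishes the argument whenever $\eta \neq \xi$.

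To handle the remaining case $\eta = \xi$, I would pre-compose: since $a^+, a^-, b^+, b^-$ are four distinct points, at least one of $a, b$ does not fix $\xi$ --- say $a$, since otherwise $\xi$ would lie in $\{a^\pm\} \cap \{b^\pm\} = \emptyset$. Replacing $h_n$ by $h_n' := h_n a$ changes word-length by a bounded amount, so $h_n' \to \xi'$ still, while $(h_n')^{-1} = a^{-1} h_n^{-1} \to a^{-1}\xi \neq \xi$. The previous case now applies to $h_n'$ and gives $h_n' \xi \to \xi'$, establishing minimality.

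For the two corollaries: if $\partial_\infty G$ were finite, the $G$-action would factor through a finite symmetric group, so for each infinite-order $g \in G$ some power $g^N$ would fix every boundary point; but Lemma~\ref{invariant_quasiaxis} together with source--sink dynamics on the quasiaxis shows that $g^N$ has exactly two fixed points on $\partial_\infty G$, while nonelementarity gives $|\partial_\infty G| \geq 3$ --- a contradiction. For perfectness, an isolated $\xi$ would have $G$-orbit consisting entirely of isolated points (homeomorphisms preserve isolation), and minimality would make this orbit a dense discrete subset of the compact space $\partial_\infty G$, forcing it to be finite and contradicting the previous step. The main obstacle in the plan is the convergence-at-infinity lemma in the second paragraph; the rest is essentially bookkeeping once that is in hand.
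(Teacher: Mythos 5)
Your main argument is correct, and it takes a genuinely different route from the paper. The paper argues in two steps using the dynamics of loxodromic elements: first, source--sink dynamics shows that the attracting/repelling endpoints $p^{\pm}$ of a quasiaxis lie in the closure of every orbit; second, for arbitrary $r\in\partial_\infty G$ one translates the quasiaxis along a geodesic ray toward $r$ (using cocompactness of $G$ on its Cayley graph) to show $r$ lies in the closure of the orbit of $p^{+}$ or $p^{-}$. You instead prove a single convergence lemma --- if $|h_n|\to\infty$, $h_n\to\xi'$, $h_n^{-1}\to\eta$ and $p\neq\eta$, then $h_np\to\xi'$ --- via $(h_n\mid h_np)_e=(e\mid p)_{h_n^{-1}}\approx d(h_n^{-1},[e,p])\to\infty$, taking $h_n$ along a geodesic ray to $\xi'$ and disposing of the exceptional case $\eta=\xi$ by right-multiplying by a loxodromic not fixing $\xi$. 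This is essentially the standard convergence-group argument; the loxodromic dynamics enter only in the exceptional case, and your lemma is of independent use. (One quibble, at the same level of rigor as the paper itself: Example~\ref{free_from_pingpong} analyzes a given pair of infinite-order elements but does not by itself produce a pair with four distinct endpoints; the paper's proof makes the identical unproved assertion, so you are at parity there.)

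There is, however, a genuine flaw in your deduction of perfectness: the claim that a dense discrete subset of a compact space must be finite is false. In $X=\{0\}\cup\{1/n : n\ge 1\}\subset\R$ the points $1/n$ are isolated in $X$, dense, and infinite in number, so your final sentence does not produce a contradiction as written. The repair is to use minimality once more, not merely density of one orbit: if $\xi$ were isolated, its orbit $O$ would consist of isolated points and hence be an open $G$-invariant set; the complement of $O$ is then a closed invariant set avoiding $\xi$, and if it were nonempty the orbit of any of its points would be dense yet contained in this proper closed set, a contradiction. Hence $\partial_\infty G=O$ would be compact and discrete, so finite, contradicting the infiniteness you have already (correctly) established. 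Equivalently, the set of non-isolated points is closed, $G$-invariant, and nonempty (an infinite compact space has an accumulation point), so by minimality it is all of $\partial_\infty G$. Everything before this last step is fine.
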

\begin{proof}
If $G$ is nonelementary, there are $g,h$ whose quasiaxes have distinct endpoints $p^\pm$
and $q^\pm$ respectively. If $r\in\partial_\infty G$ is arbitrary, then either
$g^nr \to p^+$ or $g^nhr \to p^+$; it follows that every attracting/repelling point is
in the closure of every orbit.

Now let $\gamma$ be a geodesic from $p^-$ to $p^+$ and let $\gamma'$ be a geodesic ray 
asymptotic to $r$. Pick $s$ on $\gamma$ and let $g_i$ be a sequence of elements with
$g_i(s) \in \gamma'$ converging to $r$. At most one component of $\gamma - s$ can come close
to the basepoint $x$. Hence there is some subsequence so that either $g_ip^+ \to r$ or
$g_ip^- \to r$, and therefore every point is in the closure of the orbit of some
attracting/repelling point. This proves the lemma.
\end{proof}

Another way to see the compactification $\partial_\infty X$ is in terms of 
(equivalence classes of) {\em horofunctions}.

\begin{definition}[horofunction]
Let $\gamma$ be a geodesic ray parameterized by length. The {\em horofunction}
(also called the {\em Busemann function}) associated to $\gamma$ is the limit
$$b_\gamma(x):=\lim_{t \to \infty} d_X(x,\gamma(t))-t$$
The level sets of horofunctions are called {\em horospheres}.
\end{definition}
This limit exists and is finite, by the triangle inequality. Moreover, it is $1$-Lipschitz.
If $\gamma$ and $\gamma'$ are asymptotic, then there is some constant $C(\gamma,\gamma')$
so that $|(b_\gamma - b_{\gamma'}) - C|\le 2\delta$. If $x$ is the endpoint of $\gamma$, we let
$b_x$ denote any horofunction of the form $b_\gamma$, and say that $b_x$ is {\em centered at
$x$}. 

Here is another way to define $b_\gamma$
without reference to $\gamma$. On any proper metric space, the set of $1$-Lipschitz functions 
mod constants is compact (in the topology of convergence on compact subsets). For any 
$x\in X$ the function $d_X(x,\cdot):X \to \R$ is $1$-Lipschitz, and $x \to d_X(x,\cdot)$
embeds $X$ in the space of $1$-Lipschitz functions on $X$ mod constants. 
The closure of this image defines a
natural compactification of $X$; quotienting further by bounded functions gives $\overline{X}$.
For each $x \in \partial_\infty X$ the preimage is the set of equivalence classes of
functions $b_x$. In this way we think of $b_x$ as a normalization of the function which
measures ``distance to $x$''.

\medskip

The space $\partial_\infty X$ can be metrized following Gromov (see \cite{Gromov_hyperbolic}).
\begin{definition}\label{a_distance}
Fix some basepoint $x$ and some constant $a>1$. The {\em $a$-length} of a rectifiable
path $\gamma$ in $X$ is the integral along $\gamma$ of $a^{-d_X(x,\cdot)}$, and the
{\em $a$-distance} from $y$ to $z$, denoted $d_X^a(y,z)$, 
is the infimum of the $a$-lengths of paths between $y$ and $z$.
\end{definition}
A straightforward calculation shows that there is an $a_0>1$ so that for $a<a_0$,
the $a$-length defines a metric on $\overline{X}$. In fact, any $a_0$ with 
$\delta\log(a_0)\ll 1$ will work. If $a$ is too big, $a$-length still extends to
a pseudo-metric on $\overline{X}$, but now distinct points of $\partial_\infty X$
might be joined by a sequence of paths with $a$-length going to $0$.
Increasing $a$ will decrease the Hausdorff dimension of 
$\partial_\infty X$; of course, the Hausdorff dimension must always be at least 
as big as the topological dimension. In any case, it follows that $\partial_\infty X$ is
metrizable.

The following lemma is useful to compare length and $a$-length.
\begin{lemma}\label{a_comparison_lemma}
For $a<a_0$ there is a constant $\lambda$ so that for all points 
$y,z\in \partial_\infty X$ there is an inequality 
$\lambda^{-1}a^{-d_X(x,yz)} \le d_X^a(y,z) \le \lambda a^{-d_X(x,yz)}$ where 
$d_X(x,yz)$ is the ordinary distance from the basepoint $x$ to the geodesic $yz$.
\end{lemma}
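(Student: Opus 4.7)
The plan is to prove the two inequalities separately. The upper bound is a direct computation using the geodesic $yz$ itself as a candidate path. The lower bound analyzes an arbitrary rectifiable path through its distance profile to $x$, and splits into an easy case and a hard case depending on how close the path comes to $x$; the hard case is where the condition $a<a_0$ enters decisively.

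For the upper bound, let $p$ be a nearest point on $yz$ to $x$ (essentially unique by Lemma~\ref{nearest_point_projection}) and parameterize $yz$ by arc length so that $yz(0) = p$. Writing $D := d_X(x, yz)$, a standard thin-triangle argument applied to the triangle with vertices $x$, $p$, and $yz(s)$ (using that $p$ is an approximate foot of perpendicular) shows that $d_X(x, yz(s)) \ge D + |s| - C_0\delta$ for a universal constant $C_0$. Integrating the conformal factor $a^{-d_X(x,\cdot)}$ along $yz$ then gives
\[
d_X^a(y, z) \;\le\; \int_{-\infty}^{\infty} a^{-d_X(x, yz(s))}\, ds \;\le\; \frac{2 a^{C_0\delta}}{\ln a}\, a^{-D},
\]
yielding the upper bound.

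For the lower bound, let $\gamma$ be any rectifiable path from $y$ to $z$, parameterized by arc length, and set $\phi(t) := d_X(x, \gamma(t))$. Then $\phi$ is $1$-Lipschitz, tends to $+\infty$ at both ends, and attains a minimum $\phi_{\min}$. A change-of-variables inequality applied to the descending and ascending branches of $\phi$ at its minimum gives
\[
\int a^{-\phi(t)}\, dt \;\ge\; \int a^{-\phi(t)} |\phi'(t)|\, dt \;\ge\; \frac{2 a^{-\phi_{\min}}}{\ln a},
\]
which is at least $\lambda^{-1} a^{-D}$ as soon as $\phi_{\min} \le D + C_1\delta$ for some fixed constant $C_1$.

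The remaining case, and the main obstacle, is when $R := \phi_{\min} > D + C_1\delta$ and so $\gamma$ avoids the ball $B(x, R)$. The monotonicity estimate then gives only $2 a^{-R}/\ln a$, which is too small; one must additionally use that $\gamma$ must execute a long detour between the shadow of $y$ and the shadow of $z$ while remaining outside $B(x, R)$. By the exponential divergence of geodesic rays in a $\delta$-hyperbolic space, this detour has length (in the original metric) at least of order $\alpha^{R - D}$ for a constant $\alpha > 1$ depending only on $\delta$; since the integrand is comparable to $a^{-R}$ along the detour, this contributes $a$-length at least $c\, a^{-R} \alpha^{R - D} = c\, (\alpha/a)^{R - D}\, a^{-D}$. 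The constant $a_0$ in Definition~\ref{a_distance} is chosen precisely so that $a_0 \le \alpha$; for $a < a_0$ we then have $\alpha/a > 1$, and the detour contribution is uniformly at least $c\, a^{-D}$, completing the proof. The hardest technical step is formulating the exponential divergence estimate with a constant controlled purely in terms of $\delta$, which is a standard consequence of the thin-triangle property but requires some care.
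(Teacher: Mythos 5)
Your upper bound is fine (with $p$ the nearest point of $yz$ to $x$ one has $d_X(x,yz(s))\ge D+|s|-C_0\delta$, and the integral converges as you say), and so is the calculus step in the lower bound: since $\phi$ is $1$-Lipschitz in the arclength parameter, the change-of-variables inequality does give $\int_\gamma a^{-\phi}\,dt\ge 2a^{-\phi_{\min}}/\ln a$, which settles the case $\phi_{\min}\le D+C_1\delta$. (For what it is worth, the paper itself gives no proof of this lemma, deferring to Coornaert--Delzant--Papadopoulos; the standard published arguments go through the approximate ultrametric inequality for $a^{-(y|z)}$ and a Frink-type chain-subdivision lemma rather than through path integrals, and that is where the restriction $a<a_0$ usually appears.)

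The genuine gap is in your hard case, which is the heart of the lemma and the only place you use $a<a_0$. You bound the $a$-length of the detour below by $(\text{metric length})\cdot a^{-R}$ on the grounds that ``the integrand is comparable to $a^{-R}$ along the detour.'' It is not: on the complement of $B(x,R)$ the weight $a^{-d_X(x,\cdot)}$ is bounded \emph{above} by $a^{-R}$ but can be arbitrarily small, and nothing prevents the path from dipping briefly to distance $R$ and then executing its long crossing at distance $2R$ or $100R$ from $x$, where the weight is $a^{-2R}$ or $a^{-100R}$. So a single application of exponential divergence at the radius $R=\phi_{\min}$, multiplied by $a^{-R}$, does not bound the $a$-length, and the displayed estimate $c(\alpha/a)^{R-D}a^{-D}$ does not follow. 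The underlying intuition is correct --- at distance $r$ from $x$ the weight is about $a^{-r}$ while a crossing staying outside $B(x,r)$ costs length of order $\alpha^{r-D}$, and $a<\alpha$ is exactly what makes these products uniformly bounded below by $a^{-D}$ --- but turning this into a proof requires applying the divergence estimate at \emph{every} scale $r$, with a careful decomposition of the path according to its distance from $x$: at each radius you must exhibit a \emph{connected} subpath outside $B(x,r)$ that does the crossing (the part of $\gamma$ outside a ball need not be connected), handle the recursion when the crossing happens only at still larger radii, and truncate because the endpoints are ideal. As written, the hard case is asserted rather than proved; either carry out such a scale-by-scale argument in full, or replace it by the standard chain-subdivision argument for $a^{-(y|z)}$, checking there how the hypothesis $a<a_0$ enters quantitatively.
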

For a proof, see \cite{Coornaert_Delzant_Papadopoulos}.

The quantity $d_X(x,yz)$ is sometimes abbreviated by $(y|z)$ (the basepoint $x$ is
suppressed in this notation), and called the {\em Gromov product}. So we can also write
$\lambda^{-1}a^{-(y|z)} \le d_X^a(y,z) \le \lambda a^{-(y|z)}$.
Because of this inequality, different choices of $a$ give rise to H\"older equivalent
metrics on $\partial_\infty X$.
If $X$ is a group $G$, we take $\id$ as the basepoint, by convention.

\begin{remark}
With our notation, $(y|z):=d_X(x,yz)$ is ambiguous, since it depends on a choice of geodesic
from $y$ to $z$. Since we only care about $(y|z)$ up to a uniform additive constant, we ignore
this issue. One common normalization, adopted by Gromov, is to use the formula 
$(y|z): = \frac 1 2( d_X(x,y) + d_X(x,z) - d_X(y,z) )$. These definitions are interchangeable
for our purposes, as the ambiguity can always be absorbed into some unspecified constant.
\end{remark}

A group $G$ acting by homeomorphisms on a compact metrizable space $M$ is said to be
a {\em convergence action} if the induced
action on the space $M^3-\Delta$ of distinct ordered triples is properly discontinuous.

\begin{lemma}\label{action_convergence}
The action of $G$ on $\partial_\infty G$ is a convergence action. Moreover, the action
on the space of distinct triples is cocompact.
\end{lemma}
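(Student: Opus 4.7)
The plan is to construct a coarsely $G$-equivariant ``barycenter'' map
\[
\beta \colon (\partial_\infty G)^3 - \Delta \longrightarrow G
\]
which assigns to each distinct ordered triple a vertex of $C_S(G)$ in the ``thick part'' of the ideal triangle spanned by the three points. Both conclusions of the lemma will then be reduced to the (obvious) fact that $G$ acts on itself properly discontinuously and cocompactly.

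First I would construct $\beta$. Given distinct $\xi_1,\xi_2,\xi_3\in\partial_\infty G$, form the three bi-infinite geodesics $\gamma_{ij}$ joining $\xi_i$ to $\xi_j$. By the $\delta'$-thinness of ideal geodesic triangles (the extension of $\delta$-thinness to triangles with endpoints at infinity, noted earlier in the text), for each side $\gamma_{ij}$ there is a point where $\gamma_{ij}$ is simultaneously within $\delta'$ of the other two sides, and any two such ``incenter'' points lie within bounded diameter $D=D(\delta)$ of each other. Let $\beta(\xi_1,\xi_2,\xi_3)$ be any vertex of $C_S(G)$ within distance $D+1$ of this incenter region. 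The choice is ambiguous by a uniformly bounded amount, but $\beta$ is $G$-equivariant up to this bounded error.

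Next I would verify the key geometric property: preimages of bounded sets under $\beta$ are compact in $(\partial_\infty G)^3 - \Delta$. If $K$ is a compact subset of distinct triples, then by compactness the pairwise visual distances $d^a_X(\xi_i,\xi_j)$ are bounded below on $K$, so by Lemma~\ref{a_comparison_lemma} the Gromov products $(\xi_i|\xi_j)$ are bounded above by some constant $C$; equivalently each geodesic $\gamma_{ij}$ passes within $C$ of $\id$. The incenter of the triangle therefore lies within $C+O(\delta)$ of $\id$, so $\beta(K)$ is bounded. Conversely, if $\beta(\xi_1,\xi_2,\xi_3)$ lies in a fixed ball $B_R(\id)$, then $(\xi_i|\xi_j)\le R+O(\delta)$, so $d^a_X(\xi_i,\xi_j)\ge\lambda^{-1}a^{-R-O(\delta)}$, and the set of such triples is closed in $(\partial_\infty G)^3$ and bounded away from the diagonal, hence compact.

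With $\beta$ in hand both conclusions are immediate. For proper discontinuity, if $gK\cap K\ne\emptyset$ for a compact set $K$ of distinct triples, then $g$ sends some point in the bounded set $\beta(K)$ to within bounded distance of $\beta(K)$; since $G$ acts properly discontinuously on $C_S(G)$, only finitely many $g$ can do this. For cocompactness on triples, given any distinct triple $(\xi_1,\xi_2,\xi_3)$, set $g=\beta(\xi_1,\xi_2,\xi_3)^{-1}$; then $g\cdot(\xi_1,\xi_2,\xi_3)$ has barycenter within bounded distance of $\id$, so the set of triples with barycenter in $B_R(\id)$, for $R$ chosen larger than this bound, is a compact set whose $G$-translates cover $(\partial_\infty G)^3-\Delta$. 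The main technical obstacle is the first step: verifying that an ideal triangle in a $\delta$-hyperbolic space really does have a well-defined coarse incenter of uniformly bounded diameter, which is a standard but slightly delicate exercise in chasing thin-triangle inequalities through the various cases of how the three sides interact.
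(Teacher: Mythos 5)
Your proposal is correct and follows essentially the same route as the paper: both define a coarse, coarsely equivariant barycenter map from distinct triples to $G$ (a point uniformly close to all three sides of the ideal triangle, well-defined up to bounded ambiguity) and then transfer proper discontinuity and cocompactness from the action of $G$ on itself. Your write-up is somewhat more detailed on the properness of the barycenter map (via the Gromov product and Lemma~\ref{a_comparison_lemma}), which the paper leaves implicit, but the underlying argument is the same.
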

\begin{proof}
If $x,y,z$ is a distinct triple of points in $\partial_\infty G$, there is a 
point $p$ within distance $\delta$ of all three geodesics $xy,yz,zx$; moreover, the set of
such points has uniformly bounded diameter in $G$. This defines an approximate map from
distinct triples to points in $G$. Since the action of $G$ on itself is cocompact, the same
is true for the action on the space of distinct triples. Similarly, if the action of $G$
on the space of distinct triples were not properly discontinuous, we could find two bounded
regions in $G$ and infinitely many $g_i$ in $G$ taking some point in one bounded region to some
point in the other, which is absurd.
\end{proof}

The converse is a famous theorem of Bowditch:

\begin{theorem}[Bowditch's convergence theorem \cite{Bowditch} Thm.~0.1]\label{bowditch_theorem}
Let $G$ act faithfully, properly discontinuously and cocompactly on the space of 
distinct triples of some perfect compact metrizable space $M$. Then $G$ is hyperbolic 
and $M$ is $G$-equivariantly homeomorphic to $\partial_\infty G$.
\end{theorem}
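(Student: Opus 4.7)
The plan is to follow Bowditch's strategy of extracting a hyperbolic geometry on $G$ from the dynamical data and then identifying its Gromov boundary with $M$. I would organize the argument into four steps.

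\textbf{Step 1: dynamical classification of elements.} Apply the convergence hypothesis to the powers $g^n$ of an infinite-order element $g \in G$: any sequence of distinct powers has a subsequence converging uniformly off a single ``source'' to a single ``sink'' in $M$. This forces $g$ to have exactly two fixed points on $M$ with source--sink dynamics (i.e.\ $g$ is loxodromic). Cocompactness of the action on $M^3 - \Delta$ rules out parabolic behaviour, since an element with a unique fixed point on $M$ would drive orbits of nearby triples out of every compact subset. Since $M$ is perfect, $M$ has at least three points and the only finite-order elements have bounded order.

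\textbf{Step 2: transverse loxodromics.} Using perfectness of $M$ together with cocompactness on triples, I would show that loxodromic fixed-point pairs are dense in $M^2 \setminus \Delta$. A ping-pong argument along the lines of Example~\ref{free_from_pingpong} then produces pairs of loxodromics with pairwise disjoint fixed-point sets generating nonabelian free subgroups. This supplies the abundant ``transverse'' dynamics needed to feed into the geometric construction.

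\textbf{Step 3: construction of a hyperbolic graph.} Following Bowditch, fix a $G$-invariant family of \emph{annuli} in $M$ --- pairs $(A^-,A^+)$ of disjoint closed subsets with nonempty complement --- chosen so that finitely many $G$-orbits suffice and so that the ``crossing'' relations among them reflect the convergence dynamics (cocompactness on triples is what makes such a finite family exist). Build a graph $\Gamma$ with vertex set $G$ whose edges record adjacency with respect to this annulus system, and prove by a combinatorial argument that any three long chains of mutually crossing annuli share a common ``centre''; this translates into uniform thinness of geodesic triangles in $\Gamma$.

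\textbf{Step 4: boundary identification and conclusion.} By construction $G$ acts cocompactly and properly discontinuously on the proper graph $\Gamma$, so Lemma~\ref{Schwarz_lemma} gives a quasi-isometry $G \to \Gamma$ and Corollary~\ref{delta_hyperbolic_QI} then makes $G$ hyperbolic. To identify $\partial_\infty G$ with $M$ equivariantly, send a sequence of group elements diverging in $\Gamma$ to its unique accumulation point in $M$ (unique by the convergence property); check this is well defined on asymptotic classes, continuous, and $G$-equivariant, and then invoke minimality of both actions (Lemma~\ref{action_minimal} on one side, cocompactness on triples on the other) to upgrade it to a $G$-equivariant homeomorphism.

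\textbf{Main obstacle.} Step 3 is the heart of the theorem and by far the hardest part: one has to manufacture a genuinely $\delta$-hyperbolic metric geometry out of purely topological dynamics. The delicate points are the choice of annulus system (which must be $G$-finite yet sensitive enough to the dynamics to detect a ``coarse geodesic'' between any two points of $M$) and the combinatorial verification that triangles of annulus chains admit a common centre. Everything else, including the boundary identification in Step 4, is relatively formal once the graph $\Gamma$ has been built and shown to be hyperbolic.
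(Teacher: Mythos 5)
This theorem is not proved in the paper at all: it is quoted as a deep result of Bowditch (\cite{Bowditch}, Thm.~0.1), so there is no in-paper argument to compare yours against; the only meaningful comparison is with Bowditch's published proof, whose broad strategy (annulus systems, a crossratio/quasimetric built from crossing combinatorics, a Schwarz-lemma argument to transfer hyperbolicity to $G$, then identification of $\partial_\infty G$ with $M$) your four steps do mirror correctly at the level of a table of contents.

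However, as a proof your proposal has a genuine gap, and you in fact name it yourself: Step~3 is not an implementation detail but the entire mathematical content of the theorem, and it is only asserted. Producing a $G$-finite annulus system whose crossing relations are rich enough to separate every pair of distinct points of $M$, defining the associated crossratio, and verifying the hyperbolicity axioms for it (your ``three long chains of mutually crossing annuli share a common centre'') occupies essentially all of Bowditch's paper; nothing in the present notes (Lemma~\ref{Schwarz_lemma}, Corollary~\ref{delta_hyperbolic_QI}, Example~\ref{free_from_pingpong}) supplies it. Two further points in Step~1 are also understated: excluding parabolics from a uniform convergence action --- equivalently, showing every point of $M$ is a conical limit point --- is a substantive argument in its own right, not a one-line consequence of cocompactness on triples; and the claim that torsion has bounded order is something one obtains \emph{after} $G$ is known to be hyperbolic, so it cannot be invoked a priori. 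Step~4 is indeed comparatively formal once Step~3 is in hand, but until the annulus construction and the hyperbolicity verification are actually carried out, what you have is an accurate roadmap of Bowditch's proof rather than a proof.
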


\subsection{Patterson--Sullivan measure}\label{Patterson_Sullivan_subsection}

The results in this section are due to Coornaert \cite{Coornaert}, although because of
our more narrow focus we are able to give somewhat different and shorter proofs.
However by and large our proofs, like Coornaert's, are obtained by directly generalizing 
ideas of Sullivan \cite{Sullivan} in the context of Kleinian groups.

Let $G$ be a hyperbolic group, and let $G_{\le n}$ denote the set of elements of
(word) length $\le n$, with respect to some fixed generating set. The {\em critical exponent}
$h(G)$ (also called the {\em volume entropy} of $G$) is the quantity
$$h(G):=\limsup_{n\to \infty} \frac 1 n \log{|G_{\le n}|}$$
in other words, the exponential growth rate of $G$.
Since every nonelementary hyperbolic group contains many free groups 
(Example~\ref{free_from_pingpong}), $h(G)=0$ if and only if $G$ is elementary.

Define the (Poincar\'e) {\em zeta function} by the formula
$$\zeta_G(s):= \sum_{g \in G} e^{-s|g|}$$
Then $\zeta_G(s)$ diverges if $s<h(G)$ and converges if $s>h(G)$.

\begin{lemma}\label{zeta_diverges}
The zeta function diverges at $s=h(G)$.
\end{lemma}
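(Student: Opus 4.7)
The plan is to establish a purely exponential lower bound on the ball counts $b_n := |G_{\le n}|$, namely $b_n \ge e^{hn}$ where $h = h(G)$, and then convert this into divergence of $\zeta_G(h)$ by summation by parts.

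First, I would establish submultiplicativity $b_{n+m} \le b_n \cdot b_m$: any $g$ with $|g| \le n+m$ splits along a geodesic word representative as $g = g_1 g_2$ with $|g_1| \le n$ and $|g_2| \le m$, so the multiplication map $G_{\le n} \times G_{\le m} \to G_{\le n+m}$ is surjective. Applying Fekete's lemma to the subadditive sequence $\log b_n$ then shows that $n^{-1}\log b_n$ actually converges (so the $\limsup$ in the definition of $h(G)$ is a genuine limit), and the limit equals $\inf_n n^{-1}\log b_n$. In particular $b_n \ge e^{hn}$ for every $n \ge 1$.

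Finally, grouping $\zeta_G(s) = \sum_{n \ge 0} \sigma_n e^{-sn}$ by word length, where $\sigma_n := b_n - b_{n-1}$ counts elements of length exactly $n$ (with $b_{-1}:=0$), an Abel summation gives
\begin{equation*}
\sum_{n=0}^N \sigma_n e^{-hn} = b_N e^{-hN} + (1 - e^{-h}) \sum_{n=0}^{N-1} b_n e^{-hn}.
\end{equation*}
When $h > 0$ (equivalently $G$ nonelementary), every term $b_n e^{-hn}$ on the right is at least $1$, so the sum is bounded below by $N(1 - e^{-h})$, which tends to infinity. In the remaining case of $G$ infinite and elementary, $G$ is virtually $\Z$ and $\zeta_G(0) = |G| = \infty$ is immediate.

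The only step requiring any real idea is the passage from the defining $\limsup$ of $h(G)$ to an actual infimum, which is exactly where the submultiplicativity $b_{n+m} \le b_n b_m$ enters via Fekete; the rest is bookkeeping. Notably, no use of hyperbolicity is needed here — the pointwise bound $b_n \ge e^{hn}$, and hence the divergence of $\zeta_G$ on the critical line, is a feature of arbitrary finitely generated groups.
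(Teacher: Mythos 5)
Your argument is correct, and it is genuinely different from the one in the paper. The paper proves this lemma by invoking Cannon's theorem (proved later, in \S 3): there is a regular language of geodesics bijecting with $G$, the generating function of a regular language is rational, and hence $|G_{\le n}|$ is comparable to $e^{hn}n^k$; divergence at $s=h$ follows. That route requires a forward reference (which the paper must then check is non-circular) and the full strength of hyperbolicity via finiteness of cone types, but it yields a much stronger two-sided asymptotic for $|G_{\le n}|$ that is reused elsewhere (e.g.\ in Corollary~\ref{Coornaert_growth_rate}). Your route --- exact submultiplicativity $b_{n+m}\le b_n b_m$ from splitting a geodesic word, Fekete to convert the $\limsup$ into $\inf_n n^{-1}\log b_n$, hence the pointwise bound $b_n\ge e^{hn}$, and Abel summation --- is self-contained, elementary, and, as you note, valid for every finitely generated group; it is exactly the reason word metrics are always of ``divergence type,'' in contrast to orbital counting for Kleinian groups, where the splitting step fails and convergence type can occur. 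All the individual steps check out: the surjectivity of $G_{\le n}\times G_{\le m}\to G_{\le n+m}$, the summation-by-parts identity (with $b_{-1}=0$), and the lower bound $b_ne^{-hn}\ge 1$ for all $n\ge 0$. The one caveat, shared with the paper, is that the lemma is vacuously false for finite $G$ (where $\zeta_G(0)=|G|<\infty$); your closing remark correctly disposes of the infinite elementary case, and the finite case is irrelevant to the Patterson--Sullivan construction the lemma serves.
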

\begin{proof}
We will show in \S~\ref{combing_section} (Theorem~\ref{Cannon_theorem})
that for any hyperbolic group $G$ and any
generating set $S$ there is a {\em regular language}
$L\subset S^*$ consisting of geodesics, which evaluates bijectively to $G$. 
In particular, $|G_{\le n}| = |L_{\le n}|$ for any $n$.
In any regular language $L$ the generating function $\sum |L_{\le n}|t^n$ is
{\em rational} (Theorem~\ref{regular_language_generating_function}); 
i.e.\/ it is the power series expansion of $p(t)/q(t)$ for some integral
polynomials $p,q$, and consequently $C^{-1}(e^{hn}n^k)\le|L_{\le n}|\le C(e^{hn}n^k)$ 
for some real $h$ and non-negative integer $k$, and constant $C$. 
Evidently, for $L$ as above, $h=h(G)$ and the zeta function diverges at $h$.
\end{proof}

For $s>h(G)$ construct a probability measure $\nu_s$ on $\overline{G}$ 
(i.e.\/ on $G\cup \partial_\infty G$) supported in $G$, by putting a Dirac 
mass of size $e^{-s|g|}/\zeta_G(s)$ at each $g\in G$. As $s$ converges to $h$ 
from above, this sequence of probability measures contains a subsequence which converges
to a limit $\nu$. Since the zeta function diverges at $h$, the limit $\nu$ is
supported on $\partial G$. This measure is called a {\em Patterson--Sullivan} measure,
by analogy with the work of Patterson and Sullivan \cite{Patterson,Sullivan} on Kleinian groups.

For any $g$, the pushforward of measure $g_*\nu_s$ is defined
by $g_*\nu_s(A) = \nu_s(g^{-1}A)$, and similarly for $g_*\nu$.
For any $g,g'$ there is an inequality $|g'|-|g| \le |gg'| \le |g'|+|g|$. From
the definition on $\nu_s$, this implies that $g_*\nu_s$ is absolutely continuous
with respect to $\nu_s$, and its Radon--Nikodym derivative satisfies
$e^{-s|g|} \le d(g_*\nu_s)/d\nu_s \le e^{s|g|}$.
Passing to a limit we deduce that $e^{-h|g|} \le d(g_*\nu)/d\nu \le e^{h|g|}$. 

The most important property of the measure $\nu$ is a refinement of this inequality, which
can be expressed by saying that it is a so-called {\em quasiconformal
measure of dimension $h$}. The ``conformal'' structure on $\partial_\infty X$ is defined
using the $a$-distance for some fixed $a>1$ (recall Definition~\ref{a_distance}).

\begin{definition}[Coornaert]
For $g\in G$ define $j_g:\partial_\infty X \to \R$ by 
$$j_g(y)=a^{b_y(\id) - b_y(g)}$$
for some horofunction $b_y$ centered at $y$.
A probability measure $\nu$ on $\partial_\infty X$ is a {\em quasiconformal measure of dimension $D$}
if $g_*\nu$ is absolutely continuous with respect to $\nu$ for every $g\in G$, and there
is some constant $C$ independent of $g$ so that
$$C^{-1}j_g(y)^D \le d(g_*\nu)/d\nu \le Cj_g(y)^D$$
\end{definition}
Notice that the ambiguity in the choice of horofunction $b_y$ is absorbed into the
definition of $j_g$ (which only depends on $b_y$ mod constant functions) 
and the constant $C$. The support of any quasiconformal measure is evidently
closed and $G$-invariant, so by Lemma~\ref{action_minimal}, it is all of $\partial_\infty G$.

From the definition of the Radon--Nikodym derivative,
$\nu$ is a quasiconformal measure of dimension $D$ if there is a constant $C$
so that for all $y$ we can find a neighborhood $V$ of $y$ in $\overline{X}$ for which
$$C^{-1}j_g(y)^D\nu(A) \le \nu(g^{-1}A) \le Cj_g(y)^D\nu(A)$$
for all $A\subset V$.

\begin{remark}
For some reason, Coornaert chooses to work with {\em pullbacks} 
of measure $g^*\nu:=g^{-1}_*\nu$ instead of pushforward. Therefore the roles
of $g$ and $g^{-1}$ are generally interchanged between our discussion and
Coornaert's.
\end{remark}

\begin{theorem}[Coornaert \cite{Coornaert} Thm.~5.4]\label{measure_is_conformal}
The measure $\nu$ is a quasiconformal measure of dimension $D$ where $D=h/\log{a}$.
\end{theorem}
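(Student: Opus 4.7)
The plan is to compute the Radon--Nikodym derivative of $g_*\nu_s$ with respect to $\nu_s$ exactly at finite $s$, recognize its pointwise values on large elements of $G$ as approximating a power of $j_g(y)$, and then pass to the limit $s \to h^+$. By the definition of $\nu_s$,
\[
g_*\nu_s(A) \;=\; \nu_s(g^{-1}A) \;=\; \frac{1}{\zeta_G(s)}\sum_{h \in A} e^{-s|g^{-1}h|},
\]
so that, pointwise on $G$,
\[
\frac{d(g_*\nu_s)}{d\nu_s}(h) \;=\; e^{-s(|g^{-1}h| - |h|)}.
\]

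The geometric heart of the argument is to establish a constant $C_0 = C_0(\delta)$, independent of $g$, such that for every $y \in \partial_\infty G$ and every $g \in G$, any $h \in G$ with Gromov product $(y|h) > |g| + C_0$ satisfies
\[
\bigl|\,(|h| - |g^{-1}h|) \;-\; (b_y(\id) - b_y(g))\,\bigr| \;\le\; C_0.
\]
By $\delta$-thinness of the ideal triangle with vertices $\id, h, y$, the geodesic $[\id, h]$ fellow-travels the ray $\gamma_y$ out to distance $\gtrsim (y|h)$ from $\id$, so one may pick a point $p$ in this fellow-travel region past $g$. Quadrilateral thinness (Example~\ref{exa:quadrilateral}) then gives $d(g,h) = d(g,p) + d(p,h) + O(\delta)$ and, for large $t$, $d(g, \gamma_y(t)) = d(g,p) + (t - d(\id,p)) + O(\delta)$. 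Subtracting $d(\id, h) = d(\id, p) + d(p, h)$ and $d(\id, \gamma_y(t)) = t$ and letting $t \to \infty$ reduces both quantities to $d(\id,p) - d(g,p)$ modulo an $O(\delta)$ error, identifying them with one another. Since $j_g(y) = a^{b_y(\id) - b_y(g)}$, this translates into
\[
e^{-sC_0}\, j_g(y)^{s/\log a} \;\le\; \frac{d(g_*\nu_s)}{d\nu_s}(h) \;\le\; e^{sC_0}\, j_g(y)^{s/\log a}
\]
for every $h \in V_y \cap G$ with $|h|$ sufficiently large, where $V_y$ is a suitable neighborhood of $y$ in $\overline{G}$.

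Summing over $h \in A \cap G$ for Borel $A \subset V_y$ gives
\[
e^{-sC_0}\, j_g(y)^{s/\log a}\, \nu_s(A) - \eta_s \;\le\; g_*\nu_s(A) \;\le\; e^{sC_0}\, j_g(y)^{s/\log a}\, \nu_s(A) + \eta_s,
\]
where $\eta_s$ is the contribution of the finitely many excluded $h$ of bounded word length; this vanishes as $s \to h^+$ because $\zeta_G(s) \to \infty$ by Lemma~\ref{zeta_diverges}. Restricting to sets $A$ whose boundary is $\nu$-null and passing to the subsequence $s_n \to h^+$ defining $\nu$ gives the quasiconformal inequality with $D = h/\log a$ and $C = e^{hC_0}$. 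Standard outer-regularity extends the bound to all Borel subsets of $V_y$, and varying $y$ over $\partial_\infty G$ yields the global statement.

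The main obstacle is ensuring that the additive error $C_0$ depends only on $\delta$ and not on $|g|$: this is what ultimately makes $C$ in the quasiconformal inequality uniform in $g$. The point $p$ must be chosen past $g$ (hence the requirement $(y|h) > |g| + C_0$), but once $p$ is sufficiently far past $g$, the nearest-point projection estimate of Lemma~\ref{nearest_point_projection} combined with quadrilateral thinness does the rest. This is the $\delta$-hyperbolic analogue of the conformal-distortion estimate Sullivan used in the Kleinian setting.
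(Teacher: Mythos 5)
Your proposal is correct and follows the same route as the paper's (much terser) proof: compute the exact Radon--Nikodym derivative $e^{-s(|g^{-1}h|-|h|)}$ of $g_*\nu_s$ at finite $s$, use $\delta$-thinness to identify $|h|-|g^{-1}h|$ with $b_y(\id)-b_y(g)$ up to an additive constant depending only on $\delta$ for $h$ in a neighborhood of $y$, and let $s\to h^+$. The extra care you take with the uniformity of $C_0$ in $g$, the vanishing error term $\eta_s$, and the weak-* limit of measures fills in details the paper leaves implicit.
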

\begin{proof}
Evidently the support of $\nu$ is $G$-invariant, and is therefore equal to all of
$\partial_\infty G$. Let $y\in \partial_\infty X$, let $b_y$ be a horofunction centered
at $y$, and let $g\in G$. 

By $\delta$-thinness and the definition of a horofunction,
$ d(g,z)-d(\id,z)$ is close to $b_y(g) - b_y(\id)$ for $z$ sufficiently close to $y$.
In particular, there is a neighborhood $V$ of $y$ in $\overline{X}$ so that
$$|g^{-1}z| - |z| - C \le b_y(g) - b_y(\id) \le |g^{-1}z| - |z| + C$$
for some $C$, and for all $z$ in $V$.

For each $s>h$ we have $g_*\nu_s(z)/\nu_s(z) = \nu_s(g^{-1}z)/\nu_s(z) = e^{-s(|g^{-1}z|-|z|)}$.
Taking the limit as $s\to h$ and defining $D$ by $a^D=e^h$ proves the theorem.
\end{proof}

To make use of this observation, we introduce the idea of a {\em shadow},
following Sullivan.

\begin{definition}
For $g\in G$ and $R$ a positive real number, the {\em shadow} $S(g,R)$ is the
set of $y\in \partial_\infty G$ such that every geodesic ray from $\id$ to $y$
comes within distance $R$ of $g$.
\end{definition}

Said another way, $y$ is in $S(g,R)$ if $g$ comes within distance $R$
of any geodesic from $\id$ to $y$. Given $R>2\delta$, 
for any fixed $n$ the shadows $S(g,R)$ with $|g|=n$ cover $\partial_\infty G$
efficiently:

\begin{lemma}\label{uniform_covering_by_shadows}
Fix $R$. Then there is a constant $N$ so that for any $y\in \partial_\infty G$ and
any $n$ there is at least $1$ and there are at most $N$ elements $g$ with
$|g|=n$ and $y \in S(g,R)$.
\end{lemma}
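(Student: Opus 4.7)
The plan is to handle the two assertions separately, with existence being essentially immediate from the Gromov-boundary setup and the upper bound following from a counting argument in a ball of bounded radius around a single vertex on a reference geodesic.

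For existence, I would fix any geodesic ray $\gamma$ from $\id$ to $y$ and take $g$ to be the vertex $\gamma(n)$, which has $|g|=n$. Any other geodesic ray $\gamma'$ from $\id$ to $y$ has the same (finite and ideal) endpoints as $\gamma$, so by the uniqueness-up-to-$2\delta$ statement for geodesics with common endpoints (established earlier in the discussion of geodesic triangles with vertices at infinity), $\gamma'$ lies in the $2\delta$-neighborhood of $\gamma$, and in particular comes within $2\delta<R$ of $g$. Hence $y\in S(g,R)$.

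For the upper bound, suppose $g_1,\ldots,g_k$ all satisfy $|g_i|=n$ and $y\in S(g_i,R)$. Fix one reference geodesic ray $\gamma$ from $\id$ to $y$. By definition each $g_i$ lies within $R$ of some geodesic ray from $\id$ to $y$, and each such ray lies in the $2\delta$-neighborhood of $\gamma$; hence every $g_i$ lies within distance $R+2\delta$ of $\gamma$. Let $p_i\in\gamma$ be a closest point to $g_i$. The triangle inequality gives $|d(\id,p_i)-n|=|d(\id,p_i)-|g_i||\le d(p_i,g_i)\le R+2\delta$, so $p_i$ lies within arclength $R+2\delta$ of $\gamma(n)$ along the geodesic. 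Consequently
\[ d(g_i,\gamma(n))\le d(g_i,p_i)+d(p_i,\gamma(n))\le 2(R+2\delta). \]

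So every such $g_i$ is a vertex of $C_S(G)$ lying in the metric ball of radius $2(R+2\delta)$ about the point $\gamma(n)$. The number of vertices in any ball of fixed radius in $C_S(G)$ is uniformly bounded --- for instance by $(2|S|)^{\lceil 2(R+2\delta)\rceil+1}$ --- so the number $k$ is bounded by a constant $N$ depending only on $R$, $\delta$, and $|S|$. No single step looks difficult; the only mild subtlety is remembering that one must pass from ``close to some geodesic from $\id$ to $y$'' to ``close to a single fixed reference ray,'' which is exactly what the $2\delta$-Hausdorff-closeness of rays with common endpoints provides.
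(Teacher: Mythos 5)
Your proof is correct and follows essentially the same route as the paper's (much terser) argument: existence by taking $g$ on a geodesic from $\id$ to $y$ and using that geodesics with the same endpoints are Hausdorff distance at most $2\delta$ apart (which is why the standing assumption $R>2\delta$ is needed), and the upper bound by confining all candidates with $|g|=n$ to a ball of radius depending only on $R$ and $\delta$ and counting vertices. One pedantic remark: for existence you need that $\gamma$ lies in the $2\delta$-neighborhood of $\gamma'$ (so that the chosen $g$ is close to every other ray $\gamma'$), not the containment you literally state, but since the cited closeness is a symmetric Hausdorff bound this is only a slip of wording.
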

\begin{proof}
If $R>2\delta$, if $\gamma$ is any geodesic from $\id$ to $y$, and if $g$ is any
point on $\gamma$, then $y\in S(g,R)$. Conversely, if $g$ and $h$ are any two
elements with $|g|=|h|$ and $y\in S(g,R)\cap S(h,R)$ then $d(g,h)\le 2R$.
\end{proof}

Sullivan's fundamental observation is that the action of $g^{-1}$ on $S(g,R)$ is
uniformly close to being {\em linear}, in the sense that the derivative
$d(g_*\nu)/d\nu$ varies by a bounded multiplicative constant on $S(g,R)$:

\begin{lemma}\label{linear_on_shadow}
Fix $R$. Then there is a constant $C$ so that for any $y\in S(g,R)$ 
there is an inequality $$C^{-1}a^{|g|} \le j_g(y) \le Ca^{|g|}$$
\end{lemma}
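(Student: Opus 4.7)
The plan is to translate the shadow condition, which is a geometric closeness statement, into an additive estimate on the Busemann function and hence into the multiplicative estimate on $j_g(y)$.

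First I would unpack the definitions. Given $y \in S(g,R)$, by definition there is a geodesic ray $\gamma$ from $\id$ to $y$ and some parameter value $s \ge 0$ with $d_X(g,\gamma(s)) \le R$. Since $\gamma$ is parameterized by arclength and starts at $\id$, we have $d_X(\id,\gamma(s)) = s$, so the triangle inequality forces $\bigl||g|-s\bigr| \le R$. I would then choose to compute $j_g(y)$ using this particular ray, i.e.\ take $b_y := b_\gamma$; since $\gamma$ starts at $\id$, $b_\gamma(\id) = 0$.

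Next I would estimate $b_\gamma(g) = \lim_{t\to\infty} \bigl(d_X(g,\gamma(t))-t\bigr)$ from above and below. For $t \ge s$, the triangle inequality gives $d_X(g,\gamma(t)) \le R + (t-s)$, so $b_\gamma(g) \le R-s$; conversely $d_X(g,\gamma(t)) \ge (t-s) - R$, so $b_\gamma(g) \ge -s - R$. Combined with $|{|g|-s}| \le R$ this yields $b_\gamma(g) \in [-|g|-2R,\, 2R-|g|]$. Hence $b_y(\id) - b_y(g) = |g| + O(R)$, and exponentiating gives
\[
 a^{-2R}\, a^{|g|} \;\le\; j_g(y) \;\le\; a^{2R}\, a^{|g|}.
\]

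Finally I would address the only subtlety, namely that $j_g$ is defined using \emph{some} horofunction centered at $y$, and a priori another choice $b_y'$ could differ from $b_\gamma$. But any two horofunctions associated to asymptotic rays satisfy $\bigl|(b_\gamma - b_{\gamma'}) - C\bigr| \le 2\delta$ for some constant $C$, so $(b_y - b_y')(\id) - (b_y - b_y')(g)$ is at most $4\delta$ in absolute value, changing $j_g(y)$ by at most a factor $a^{4\delta}$. Absorbing this into the constant, the lemma follows with $C = a^{2R+4\delta}$. There is no serious obstacle here; the only bookkeeping step that warrants care is tracking the horofunction ambiguity, which the author has already built into the definition of $j_g$ and the constant $C$.
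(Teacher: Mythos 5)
Your proposal is correct and is essentially the paper's argument: the paper's one-line proof ("by $\delta$-thinness and the definition of a shadow, $|g|-C' \le b_y(\id)-b_y(g) \le |g|+C'$") is exactly what you fill in, via triangle-inequality estimates along a ray passing within $R$ of $g$ and the $2\delta$-comparison of horofunctions for asymptotic rays to absorb the choice of $b_y$. The bookkeeping, including the explicit constant $C=a^{2R+4\delta}$, is sound.
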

\begin{proof}
Recall $j_g(y) = a^{b_y(\id) - b_y(g)}$ for some horofunction $b_y$.
But by $\delta$-thinness and the definition of a shadow, there is a constant $C'$
so that 
$$|g|-C' \le b_y(\id) - b_y(g) \le |g|+C'$$
for any $y$ in $S(g,R)$.
\end{proof}

From this one readily obtains a uniform estimate on the measure of a shadow:

\begin{lemma}\label{size_of_shadow}
Fix $R$. Then there is a constant $C$ so that for any $g \in G$ there is an inequality
$$C^{-1} a^{-|g|D} \le \nu(S(g,R)) \le C a^{-|g|D}$$
\end{lemma}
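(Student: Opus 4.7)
The plan is to combine the quasiconformal property of $\nu$ (Theorem~\ref{measure_is_conformal}) with the fact that $j_g$ is uniformly comparable to $a^{|g|}$ on $S(g,R)$ (Lemma~\ref{linear_on_shadow}). Integrating the pointwise Radon--Nikodym bound $d(g_*\nu)/d\nu \asymp j_g^D$ over $S(g,R)$ yields the key comparison
$$\nu(g^{-1}S(g,R)) \;=\; \int_{S(g,R)} \frac{d(g_*\nu)}{d\nu}\,d\nu \;\asymp\; a^{|g|D}\,\nu(S(g,R)),$$
so the lemma is equivalent to showing that $\nu(g^{-1}S(g,R))$ is bounded between two universal positive constants independent of $g$.

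The upper bound $\nu(g^{-1}S(g,R)) \le \nu(\partial_\infty G) = 1$ is immediate. For the lower bound, I would unpack $g^{-1}S(g,R) = \{y' \in \partial_\infty G : [g^{-1},y'] \text{ passes within } R \text{ of } \id\}$ and argue that its complement lies in the shadow $S(g^{-1}, R')$ for some $R'$ depending only on $R$ and $\delta$: a boundary point $y'$ whose geodesic from $g^{-1}$ avoids the $R$-ball at $\id$ must be asymptotic in the direction of the extension of $[\id, g^{-1}]$ past $g^{-1}$, which forces $[\id, y']$ itself to pass uniformly close to $g^{-1}$. Granting this inclusion, the already-established upper bound applied to $S(g^{-1}, R')$ gives $\nu(\partial_\infty G \setminus g^{-1}S(g,R)) \le C a^{-|g|D}$, so $\nu(g^{-1}S(g,R)) \ge 1/2$ whenever $|g|$ is sufficiently large. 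For the finitely many remaining $g$ of bounded length, $g^{-1}S(g,R)$ contains an open set (a visual neighborhood of the endpoint of the extension of $[g^{-1},\id]$ past $\id$) and so has positive $\nu$-measure by the minimality of the action (Lemma~\ref{action_minimal}); a uniform lower bound over this finite exceptional set completes the argument.

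The main obstacle is establishing the inclusion $\partial_\infty G \setminus g^{-1}S(g,R) \subseteq S(g^{-1}, R')$ with $R'$ independent of $|g|$. A naive tripod comparison on the ideal triangle $\id, g^{-1}, y'$ only yields $R' = |g| - R + O(\delta)$, which is useless. The sharp uniform statement needs a Busemann-function analysis: one shows that if $[g^{-1}, y']$ stays at distance $>R$ from $\id$, then $b_{y'}(\id) - b_{y'}(g^{-1})$ must be within $O(R+\delta)$ of $+|g|$, which via Lemma~\ref{a_comparison_lemma} and geodesic stability pins $y'$ into a visual neighborhood of bounded $a$-radius around the extension of $[\id, g^{-1}]$ past $g^{-1}$. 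This Busemann estimate is the technical heart of the argument.
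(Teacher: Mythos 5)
Your reduction is exactly the one the paper uses: writing $\nu(g^{-1}S(g,R))=\int_{S(g,R)}\frac{d(g_*\nu)}{d\nu}\,d\nu$ and combining Theorem~\ref{measure_is_conformal} with Lemma~\ref{linear_on_shadow} to get $\nu(g^{-1}S(g,R))\asymp a^{|g|D}\nu(S(g,R))$, so that everything comes down to a two-sided bound on $\nu(g^{-1}S(g,R))$ uniform in $g$. The upper bound by $1$ is fine, and it already yields the upper bound of the lemma. The problem is your lower bound. The inclusion $\partial_\infty G\setminus g^{-1}S(g,R)\subseteq S(g^{-1},R')$ with $R'$ depending only on $R$ and $\delta$ is false, and no Busemann analysis can rescue it. Test it in a tree: let $y'$ be a boundary point whose ray from $\id$ branches off $[\id,g^{-1}]$ at the point $q$ at distance $R+1$ from $\id$. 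Then $[g^{-1},y']$ passes through $q$ and never enters the $R$-ball about $\id$, so $y'\notin g^{-1}S(g,R)$; but $[\id,y']$ comes no closer than $|g|-R-1$ to $g^{-1}$, and $b_{y'}(\id)-b_{y'}(g^{-1})=2(R+1)-|g|$, which is near $-|g|$, not $+|g|$. The geodesic $[g^{-1},y']$ can avoid the $R$-ball about $\id$ by branching off $[g^{-1},\id]$ anywhere along its length, not only near $g^{-1}$, so such $y'$ need not lie near the extension of $[\id,g^{-1}]$ past $g^{-1}$ at all.

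What is true --- and what the paper exploits --- is that $\partial_\infty G\setminus g^{-1}S(g,R)$ is, up to $O(\delta)$, the set of $y'$ with $(g^{-1}|y')\gtrsim R$, i.e.\ essentially the shadow of the point at distance about $R$ along $[\id,g^{-1}]$. Its visual diameter is $O(a^{-R})$ uniformly in $g$, but it does \emph{not} shrink as $|g|\to\infty$ for fixed $R$; its measure is of order $a^{-RD}$, a constant. The paper therefore takes $R_0$ large enough that (by compactness of $\partial_\infty G$ and the fact that $\nu$ has no atom of full mass) every set of that diameter has measure at most some $m<1$, whence $\nu(g^{-1}S(g,R))\ge 1-m$ uniformly in $g$. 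If you want to keep your ``apply the upper bound to a shadow'' strategy, the correct target is $S(w,2\delta)$ for $w$ on $[\id,g^{-1}]$ at distance about $R$ from $\id$, which gives $\nu(\partial_\infty G\setminus g^{-1}S(g,R))\le Ca^{-RD}$ and hence the desired lower bound once $R$ is chosen large; the decay in $|g|$ you claimed is unavailable, and also unnecessary.
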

\begin{proof}
Let $m_0<1$ be the measure of the biggest atom of $\nu$, and fix $m_0<m<1$. By compactness
of $\partial_\infty G$ there is some $\epsilon$ so that every ball in $\partial_\infty G$
of diameter $\le \epsilon$ has mass at most $m$. Now,
$g^{-1}S(g,R)$ is the set of $y \in \partial_\infty G$ for which every
geodesic ray from $g^{-1}$ to $y$ comes within distance $R$ of $\id$. As $R\to \infty$, the
diameter of $\partial_\infty G - g^{-1}S(g,R)$ goes to zero uniformly in $g$ (this
follows from the quasi-equivalence of $d_X^a(y,z)$ and $a^{-(y|z)}$; see
Lemma~\ref{a_comparison_lemma}). Consequently there is some $R_0$ so that
for all $R\ge R_0$ the measure $\nu(g^{-1}S(g,R))$ is between $1-m$ and $1$, independent of $g$.

Now, by Lemma~\ref{linear_on_shadow} and the definition of a quasiconformal measure,
there is a constant $C_1$ so that
$$C_1^{-1} a^{|g|D} \le \nu(g^{-1}S(g,R))/\nu(S(g,R)) \le C_1 a^{|g|D}$$
Taking reciprocals, and using the fact that $1-m \le \nu(g^{-1}S(g,R)) \le 1$
completes the proof (at the cost of adjusting constants).
\end{proof}

Note that the argument shows that $\nu$ has no atoms, since any $y\in \partial_\infty G$
is contained in some shadow of measure $\le C a^{-Dn}$ for any $n$.
We deduce the following corollary.

\begin{corollary}[Coornaert \cite{Coornaert} Thm.~7.2]\label{Coornaert_growth_rate}
Let $G$ be a hyperbolic group. Then there is a constant $C$ so that
$$C^{-1}e^{hn} \le |G_{\le n}| \le Ce^{hn}$$ for all $n$.
\end{corollary}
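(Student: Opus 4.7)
The plan is to directly combine Lemma~\ref{size_of_shadow} with the covering estimate in Lemma~\ref{uniform_covering_by_shadows}. Fix $R>2\delta$ and let $S_n:=\{g\in G \; : \; |g|=n\}$. By Lemma~\ref{uniform_covering_by_shadows}, the collection of shadows $\{S(g,R)\}_{g\in S_n}$ covers $\partial_\infty G$ and each point of $\partial_\infty G$ lies in at least $1$ and at most $N$ of them. Integrating $\nu$ against the counting function of this cover, and using $\nu(\partial_\infty G)=1$,
$$1 \le \sum_{g\in S_n} \nu(S(g,R)) \le N.$$

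Next I would plug in the two-sided estimate from Lemma~\ref{size_of_shadow}: there is a constant $C_1$, independent of $g$ and $n$, with $C_1^{-1} a^{-|g|D} \le \nu(S(g,R)) \le C_1 a^{-|g|D}$. Since $a^D = e^h$, the summed inequality gives
$$C_1^{-1} |S_n| e^{-hn} \le \sum_{g\in S_n} \nu(S(g,R)) \le C_1 |S_n| e^{-hn}.$$
Combining the two displays yields $C_1^{-1} e^{hn} \le |S_n| \le N C_1 e^{hn}$, which is the sphere version of the desired estimate.

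Finally I would sum over spheres to pass to balls. The upper bound follows from the geometric series
$$|G_{\le n}| = \sum_{k=0}^n |S_k| \le NC_1 \sum_{k=0}^n e^{hk} = NC_1 \frac{e^{h(n+1)}-1}{e^h-1} \le C' e^{hn},$$
where $C'$ depends only on $N$, $C_1$, and $h$ (note $h>0$ since $G$ is assumed nonelementary; the elementary case is trivial). For the lower bound we only need the crude $|G_{\le n}| \ge |S_n| \ge C_1^{-1} e^{hn}$. Adjusting the constant gives the two-sided bound.

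I do not anticipate a serious obstacle here, since all the substantive work has already been done in Theorem~\ref{measure_is_conformal} and Lemmas~\ref{uniform_covering_by_shadows} and \ref{size_of_shadow}; the only care needed is to make sure the shadow radius $R$ is chosen large enough that both lemmas apply simultaneously (they do, as soon as $R>\max(2\delta,R_0)$ with $R_0$ as in the proof of Lemma~\ref{size_of_shadow}), and that the sphere-to-ball passage uses $h>0$, which is guaranteed in the nonelementary case where the statement has content.
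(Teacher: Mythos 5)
Your proof is correct, and for the upper bound it is identical to the paper's: lower-bound each shadow's measure by $C_1^{-1}e^{-hn}$, use the multiplicity-at-most-$N$ half of Lemma~\ref{uniform_covering_by_shadows} to bound $\sum_{|g|=n}\nu(S(g,R))$ by $N$, and conclude $|G_n|\le NC_1e^{hn}$. Where you genuinely diverge is the lower bound. The paper does not prove it here at all; it defers to \S~\ref{combing_section}, where the lower bound $|G_{\le n}|\ge C^{-1}e^{hn}$ falls out of the rationality of the generating function of Cannon's regular language (the estimate $C^{-1}e^{hn}n^k\le|L_{\le n}|$ already appears in the proof of Lemma~\ref{zeta_diverges}). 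You instead extract the lower bound from the same shadow machinery: the "at least $1$" half of Lemma~\ref{uniform_covering_by_shadows} makes the sphere shadows a cover, so $1=\nu(\partial_\infty G)\le\sum_{|g|=n}\nu(S(g,R))\le C_1|G_n|e^{-hn}$, giving $|G_n|\ge C_1^{-1}e^{hn}$. This is Coornaert's original symmetric argument, and it has the virtue of making the corollary self-contained within the Patterson--Sullivan section (modulo the fact that the construction of $\nu$ itself needs Lemma~\ref{zeta_diverges}, which in this paper is proved via the combing anyway); the paper's route costs nothing extra given that the generating-function estimate is needed elsewhere. Your bookkeeping in passing from spheres to balls, and your remarks about choosing $R>\max(2\delta,R_0)$ and about needing $h>0$ (i.e.\ $G$ nonelementary, which is implicit throughout this section of the paper) are all appropriate.
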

\begin{proof}
The lower bound is proved in \S~\ref{combing_section}, so we just need to prove the upper
bound. 

For each $g$ with $|g|=n$ Lemma~\ref{size_of_shadow} says
$e^{-hn} = a^{-Dn} \le C_1\nu(S(g,R))$. On the other hand, Lemma~\ref{uniform_covering_by_shadows}
says that every point $y \in\partial_\infty G$
is in at most $N$ sets $S(g,R)$ with $|g|=n$, so
$$|G_n|e^{-hn}C_1^{-1} \le \sum_{|g|=n} \nu(S(g,R)) \le N\nu(\cup_{|g|=n} S(g,R))=N$$
\end{proof}

Corollary~\ref{Coornaert_growth_rate} has important consequences that we will explore in
\S~\ref{combing_section}.

\medskip

A second corollary gives very precise metric and dynamical control 
over $\partial_\infty G$. An action of a group $G$ on a space $X$ is said to be
{\em ergodic} for some measure $\nu$ on $X$ if for any two subsets $A$, $B$
of $X$ with $\nu(A),\nu(B)>0$ there is some $g\in G$ with $\nu(g(A)\cap B)>0$.

\begin{corollary}[Coornaert \cite{Coornaert} Cor.~7.5 and Thm.~7.7]\label{ergodic_at_infinity}
Let $\nu$ be a quasiconformal measure on $\partial_\infty G$ of dimension $D$. Then
$\nu$ is quasi-equivalent to $D$-dimensional Hausdorff measure; i.e.\/ there is a constant $C$
so that $C^{-1}\Haus^D(A) \le \nu(A) \le C\Haus^D(A)$ for any $A$. In particular,
the space $\partial_\infty G$ has Hausdorff dimension $D$, and its $D$-dimensional Hausdorff
measure is finite and positive. Moreover, the action of $G$ on $\partial G$ 
is ergodic for $\nu$.
\end{corollary}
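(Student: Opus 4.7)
The plan is to establish Ahlfors $D$-regularity of $\nu$, deduce comparability with $\Haus^D$, and then run a density-point argument using quasiconformality to get ergodicity.

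First I would show that shadows and metric balls in the $a$-metric are comparable. Given $y \in \partial_\infty G$ and small $r > 0$, choose $n$ with $a^{-n}$ comparable to $r$ and let $g$ be a vertex on a geodesic from $\id$ to $y$ with $|g| = n$. By $\delta$-thinness, any geodesic from $\id$ to a point $z \in S(g, R)$ shares its initial segment of length $n$ with the geodesic to $y$ up to additive error, so $(y|z) \ge n - O(\delta + R)$ and Lemma~\ref{a_comparison_lemma} gives $d_X^a(y,z) \le C a^{-n}$; conversely, if $z \in B(y, r)$ then the geodesic from $\id$ to $z$ stays close to that to $y$ up to distance $n$, hence enters a bounded neighborhood of $g$. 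This yields $B(y, cr) \subseteq S(g, R) \subseteq B(y, Cr)$, which combined with Lemma~\ref{size_of_shadow} produces the Ahlfors regularity
\[ C^{-1} r^D \le \nu(B(y, r)) \le C r^D \]
uniformly in $y$ and in $r < r_0$.

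Ahlfors regularity at once implies the asserted quasi-equivalence with $\Haus^D$. The upper bound $\nu(A) \le C_1 \Haus^D(A)$ follows by covering $A$ with sets of small diameter $r_i$, bounding each by $C r_i^D$ via the Ahlfors estimate, and taking infima over covers. The reverse inequality $\Haus^D(A) \le C_2 \nu(A)$ uses a Vitali $5r$-covering lemma to extract a disjoint subcover, so that $\sum r_i^D$ is controlled by $\sum \nu(B(y_i, r_i)) \le \nu(N_\epsilon(A))$, and one lets $\epsilon \to 0$. Since $\nu$ is a probability measure on the compact space $\partial_\infty G$, finiteness and positivity of $\Haus^D(\partial_\infty G)$ and the identification of Hausdorff dimension both follow.

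For ergodicity, note that Ahlfors regularity makes $\nu$ doubling, so the Lebesgue differentiation theorem applies. Suppose $A \subseteq \partial_\infty G$ is $G$-invariant with $\nu(A) > 0$; fix a density point $y$ of $A$, and let $g_n$ lie on a geodesic from $\id$ to $y$ with $|g_n| = n$. By the first step $S(g_n, R) \subseteq B(y, C a^{-n})$ and both have $\nu$-mass of order $a^{-nD}$, so
\[ \frac{\nu(A^c \cap S(g_n, R))}{\nu(S(g_n, R))} \longrightarrow 0 \quad \text{as } n \to \infty \]
for any fixed $R$. By Lemma~\ref{linear_on_shadow} the derivative $j_{g_n}^D$ varies by a bounded factor on $S(g_n, R)$, so the quasiconformal bound together with $G$-invariance $g_n^{-1}(A^c \cap S(g_n, R)) = A^c \cap g_n^{-1} S(g_n, R)$ yields
\[ \frac{\nu(A^c \cap g_n^{-1} S(g_n, R))}{\nu(g_n^{-1} S(g_n, R))} \longrightarrow 0. \]
Since $\nu$ has no atoms, the argument in Lemma~\ref{size_of_shadow} lets us make $\nu(\partial_\infty G \setminus g_n^{-1}S(g_n, R))$ as small as desired by choosing $R$ large. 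Letting $n \to \infty$ and then $R \to \infty$ forces $\nu(A^c) = 0$.

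The main obstacle is this final step: one has to juggle three limits simultaneously — the density approaching $1$ on shrinking balls at $y$, the action of $g_n^{-1}$ spreading $S(g_n, R)$ out to cover nearly all of $\partial_\infty G$, and the auxiliary parameter $R$ controlling the complement — and the estimates from Lemma~\ref{linear_on_shadow} are exactly what is needed to transfer the microscopic density information at $y$ into a global statement about $A$.
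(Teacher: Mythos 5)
Your proposal is correct, and its first half (Ahlfors regularity of $\nu$ via sandwiching metric balls between shadows, then the two comparison inequalities with $\Haus^D$) is essentially the paper's argument; the only cosmetic difference is that you invoke the Vitali $5r$-covering lemma where the paper uses a greedy cover whose half-radius balls are disjoint, and in either version one still needs the small regularity step (compact $K\subset A\subset U$ open with $\nu(U-K)$ and $\Haus^D(U-K)$ small) to pass from $\nu$ of an $\epsilon$-neighborhood of $A$ back to $\nu(A)$. The genuine divergence is in the ergodicity step. The paper disposes of it in one line: if $A$ is $G$-invariant with $\nu(A)>0$, then the restriction $\nu|_A$ is itself a quasiconformal measure of dimension $D$ (the Radon--Nikodym bounds restrict to an invariant set), hence by the first part it is quasi-equivalent to $\Haus^D$ and therefore to $\nu$; since $\nu|_A$ vanishes on $A^c$, so does $\nu$, and $A$ has full measure. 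Your density-point argument --- transporting the local density of $A$ at a Lebesgue point $y$ out to all of $\partial_\infty G$ via $g_n^{-1}$, using Lemma~\ref{linear_on_shadow} to control the Jacobian on $S(g_n,R)$, and using non-atomicity to make $\nu(\partial_\infty G\setminus g_n^{-1}S(g_n,R))$ small for large $R$ --- is the classical Sullivan-style proof and is also valid (Lebesgue differentiation applies because Ahlfors regularity implies the doubling property). It is longer and requires juggling the three limits you describe, but it does not need the observation that restrictions of quasiconformal measures to invariant sets remain quasiconformal, and it adapts to settings where one has conformality estimates but no comparison with Hausdorff measure.
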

\begin{proof}
Evidently, the second and third claims follow from the first (if $A$ is a $G$-invariant subset
of $\partial_\infty G$ of positive $\nu$-measure, 
the restriction of $\nu$ to $A$ is a quasiconformal measure of dimension $D$, and is therefore
quasi-equivalent to $\Haus^D$ and thence to $\nu$. In particular, $A$ has full measure). So it
suffices to show that $\nu$ and $\Haus^D$ are quasi-equivalent.

Since $C_1^{-1}a^{-(y|z)} \le d_X^a(y,z)\le C_1a^{-(y|z)}$ it follows that
every metric ball $B(y,r)$ in $\partial_\infty G$ can be sandwiched between
two shadows $S(g_1,R) \subset B(y,r) \subset S(g_2,R)$ with $a^{-|g_1|} \ge r/C_2$ and $a^{-|g_2|}\le rC_2$. 
From Lemma~\ref{size_of_shadow} we obtain
$C_2^{-1}r^D \le \nu(B(y,r)) \le C_2r^D$.
From this and the definition of Hausdorff measure, we will obtain the theorem.

If $A$ is any measurable set, cover $A$ by balls $U_i$ of radius $\epsilon_i\le \epsilon$. Then 
$$\nu(A) \le \nu(\cup_i U_i) \le \sum_i \nu(U_i) \le C_2 \sum \epsilon_i^D$$
so letting $\epsilon \to 0$ we get $\nu(A) \le C_2\Haus^D(A)$.

The following proof of the reverse inequality was suggested to us by Curt McMullen. 
For any $\delta$ let $K$ be compact and $U$ open so that $K \subset A \subset U$
and both $\nu(U-K)$ and $\Haus^D(U-K)$ are less than $\delta$. By compactness, there is
an $\epsilon$ so that every ball of radius $\le \epsilon$ centered at a point in $K$ is
contained in $U$. Now inductively cover $K$ by balls $U_1,U_2,\cdots$
of non-increasing radius $\epsilon_i\le \epsilon$ in such a way that the center of each $U_i$ is not in
$U_j$ for and $j<i$. Then the balls with the same centers and half the radii are disjoint, so
$$\sum \epsilon_i^D = 2^D \sum (\epsilon_i/2)^D \le C_3\nu(U)$$
and therefore $\Haus^D(K) \le C_3\nu(U)$. Taking $\delta\to 0$ gives $\Haus^D(A)\le C_3\nu(A)$
and we are done.
\end{proof}

\begin{remark}
Coornaert only gives the proof of the inequality $\nu(A) \le CH^D(A)$ in his paper,
referring the reader to Sullivan \cite{Sullivan} for the proof of $C^{-1}H^D(A) \le \nu(A)$.
However, there is a gap in Sullivan's proof of the reverse inequality, of which the
reader should be warned.
\end{remark}

\begin{remark}
The approximate linearity of $g^{-1}$ on $S(g,R)$ has many other applications.
For example, see the proof of Theorem 1 in \cite{Sullivan_ergodic}.
\end{remark}

\section{Combings}\label{combing_section}

On a Riemannian manifold, a ``geodesic'' is just a smooth path that locally minimizes
length (really, energy). A sufficiently long geodesic is typically not globally length
minimizing, and the entire subject of Morse theory is devoted to the
difference. By contrast, one of the most important qualitative features of negative curvature 
is that (quasi)-geodesity is a {\em local} property (i.e.\/ Lemma~\ref{k_local_geodesics}).

This localness translates into an important combinatorial property, known technically as
{\em finiteness of cone types}. This is the basis of Cannon's theory of hyperbolic groups, and
for the more general theory of automatic groups and structures (see \cite{Epstein_et_al} for more details).

\subsection{Regular languages}
Let $S$ be a finite set, and let $S^*$ denote the set of finite words in the alphabet $S$.
An {\em automaton} is a finite directed graph $\Gamma$ with a distinguished initial vertex, and edges
labeled by elements of $S$ in such a way that each vertex has at most one outgoing edge with
a given label. Some subset of the vertices of $\Gamma$ are called {\em accept states}. A word $w$
is $S^*$ determines a simplicial path in $\Gamma$, by starting at the initial vertex, by
reading the letters of $w$ (from left to right) one by one, and by moving along the corresponding
edge of $\Gamma$ if it exists, or halting if not. Associated to $\Gamma$ there is a subset 
$L \subset S^*$ consisting of precisely those words that can be read in their entirety without
halting, and for which the terminal vertex of the associated path ends at an accept state. One
says that $L$ is {\em parameterized} by (paths in) $\Gamma$.

\begin{definition}
A subset $L \subset S^*$ is a {\em regular language} if there is a finite directed graph
$\Gamma$ as above that parameterizes $L$.
\end{definition}

Note that $\Gamma$ is not part of the data of a regular language, and for any given regular
language there will be many graphs that parameterize it. A language is {\em prefix-closed}
if, whenever $w \in L$, every prefix of $w$ is also in $L$ (the empty word is a prefix of every
word).

\begin{lemma}
If $L$ is prefix-closed and regular, there is a $\Gamma$ parameterizing $L$ for which every vertex
is an accept state.
\end{lemma}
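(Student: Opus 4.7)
The plan is to start with any automaton $\Gamma_0$ parameterizing $L$ and modify it in two steps: first prune the ``dead'' vertices, then upgrade every remaining vertex to an accept state. The prefix-closedness of $L$ will ensure that the second step does not enlarge the parameterized language.

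First, call a vertex $v$ of $\Gamma_0$ \emph{live} if there is a directed path in $\Gamma_0$ from $v$ to some accept state (the trivial path counts, so every accept state is live). Let $\Gamma$ be the subgraph of $\Gamma_0$ spanned by the live vertices, together with the same labeling, and keep the same initial vertex (if $L=\emptyset$ the statement is trivial with $\Gamma$ having no vertices; otherwise $\epsilon\in L$ by prefix-closedness, so the initial vertex is itself an accept state and in particular live). The defining property of an automaton — at most one outgoing edge with each label — is inherited by any subgraph, so $\Gamma$ is a legitimate automaton.

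Next, I verify that $\Gamma$ still parameterizes $L$. Any word read by a path in $\Gamma$ is by definition read by a path in $\Gamma_0$, so the language parameterized by $\Gamma$ is contained in $L$. Conversely, suppose $w\in L$ and let $v_0,v_1,\ldots,v_n$ be the path in $\Gamma_0$ reading $w$, with $v_n$ an accept state. For each $i$, the suffix-path $v_i,\ldots,v_n$ witnesses that $v_i$ is live, so every $v_i$ lies in $\Gamma$, and the path survives. Thus $\Gamma$ parameterizes exactly $L$.

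Now I upgrade. Let $v$ be any vertex of $\Gamma$ and suppose $w$ is a word whose path in $\Gamma$ starts at the initial vertex and ends at $v$. Because $v$ is live, there exists a word $w'$ labeling a path in $\Gamma$ from $v$ to some accept state, and concatenating these paths shows $ww'\in L$. Since $L$ is prefix-closed, $w\in L$. Thus every word reaching any vertex of $\Gamma$ already lies in $L$, and declaring every vertex of $\Gamma$ to be an accept state enlarges the parameterized language by nothing. The resulting automaton satisfies the conclusion of the lemma.

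The only step requiring any care is the observation that prefix-closedness plus liveness forces every word reaching $v$ to lie in $L$; the remainder is bookkeeping, and there is no serious obstacle.
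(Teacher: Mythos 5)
Your proof is correct, but it is not the paper's argument. The paper's proof is a one-liner: delete every non-accept vertex (and its incident edges). What makes that work is the dual observation to yours: by prefix-closedness and determinism of the automaton, the accepting path of any $w\in L$ passes \emph{only through accept states} (each prefix of $w$ is in $L$, and its path is the corresponding initial segment of the path of $w$, so each intermediate vertex must already be an accept state), so nothing of $L$ is lost; and since every surviving vertex was already an accept state, nothing is gained. You instead perform the standard ``trimming'' of automata theory --- delete the vertices from which no accept state is reachable --- and then promote all survivors to accept states, invoking prefix-closedness at the promotion step: any word reaching a live vertex extends to a word of $L$, hence already lies in $L$. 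Both arguments are sound; the paper's uses prefix-closedness to show the paths of $L$ avoid non-accept states, while yours uses it to show that declaring new accept states is harmless. Yours is a bit longer but arguably more transparent about why determinism and prefix-closedness each enter; the paper's is more economical. (The only quibble with your write-up is the degenerate case $L=\emptyset$, where an automaton with no vertices has no initial vertex; this never arises in the paper, where $L$ bijects with a group.)
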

\begin{proof}
If $\Gamma$ is any graph that parameterizes $L$, remove all non-accept vertices and the edges
into and out of them.
\end{proof}

\begin{theorem}[Generating function]\label{regular_language_generating_function}
Let $L$ be a regular language, and for each $n$,
let $L_n$ denote the set of elements of length $n$, and $L_{\le n}$ the set of
elements of length $\le n$. Let $s(t):=\sum |L_n|t^n$ and $b(t):=\sum |L_{\le n}|t^n$ be
(formal) generating functions for $|L_n|$ and $|L_{\le n}|$ respectively. Then $s(t)$ and
$b(t)$ are {\em rational}; i.e.\/ they agree as power series expansions with some ratio of
integral polynomials in $t$.
\end{theorem}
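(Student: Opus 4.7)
The plan is to realize the coefficient sequence $|L_n|$ as the $n$th power of a transfer matrix and then read off rationality from the identity $\sum_n t^n M^n = (I - tM)^{-1}$. First I would fix a finite directed labeled graph $\Gamma$ parameterizing $L$, with vertex set $V$, initial vertex $v_0$, and accept set $A\subseteq V$. Form the $|V|\times|V|$ nonnegative integer matrix $M$ whose $(u,v)$-entry is the number of directed edges from $u$ to $v$ in $\Gamma$ (ignoring labels). The defining condition on $\Gamma$ --- that at each vertex the outgoing edges carry distinct labels --- says exactly that a word in $S^*$ determines at most one path starting from $v_0$; conversely, by the definition of ``parameterizes,'' every word of $L_n$ arises from a unique length-$n$ path from $v_0$ to some vertex in $A$. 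So counting accepted words of length $n$ is the same as counting such paths.

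Second, standard linear algebra gives the number of length-$n$ paths in $\Gamma$ from $u$ to $v$ as the $(u,v)$-entry of $M^n$, so
\[
|L_n| = e_{v_0}^{T} M^n \1_A,
\]
where $e_{v_0}$ is the standard basis vector at $v_0$ and $\1_A$ is the indicator vector of $A$. Summing the formal power series yields
\[
s(t) = e_{v_0}^{T}\Bigl(\sum_{n=0}^\infty t^n M^n\Bigr)\1_A = e_{v_0}^{T}(I - tM)^{-1}\1_A.
\]
By Cramer's rule the entries of $(I - tM)^{-1}$ are quotients of integral polynomials in $t$ with common denominator $\det(I - tM)$, so $s(t)$ is a rational function with integer coefficients.

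For $b(t)$ I would use that $|L_{\le n}|$ is the $n$th partial sum of the sequence $|L_k|$, which at the level of generating functions is multiplication by $1/(1-t)$; thus $b(t) = s(t)/(1-t)$, which is again rational.

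There is no real obstacle here: everything reduces to the standard transfer-matrix formalism. The only point that requires care is the verification that words in $L_n$ are in bijection with length-$n$ accepting paths from $v_0$, so that $|L_n|$ is genuinely counted by $e_{v_0}^{T} M^n \1_A$; and this bijection is forced by the hypothesis that outgoing edges at each vertex have distinct labels.
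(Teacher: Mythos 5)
Your proposal is correct and takes essentially the same route as the paper: both encode $|L_n|$ as $v_0^{T}M^n v_a$ for the adjacency matrix $M$ of a parameterizing automaton, and both obtain $b(t)=s(t)/(1-t)$. The only cosmetic difference is in how rationality is extracted at the end --- you expand $(I-tM)^{-1}$ and invoke Cramer's rule, whereas the paper applies the Cayley--Hamilton theorem to produce a linear recurrence for $|L_n|$; the two finishes are interchangeable.
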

\begin{proof}
Note that $b(t)=s(t)/(1-t)$ so it suffices to prove the theorem for $s(t)$. 
Let $\Gamma$ parameterize $L$, and let $M$ be the adjacency matrix of $\Gamma$; 
i.e.\/ $M_{ij}$ is equal to the
number of edges from vertex $i$ to vertex $j$. Let $v_0$ be the vector with a $1$ in the initial
state, and $0$ elsewhere, and let $v_a$ be the vector with a $1$ in every accept state, and $0$
elsewhere. Then $|L_n| = v_0^TM^nv_a$.

A formal power series $A(t):=\sum a_nt^n$
is rational if and only if its coefficients satisfy a linear recurrence; i.e.\/ if there
are constants $c_0,\cdots,c_d$ (not all zero) so that $c_0a_n + c_1a_{n-1} + \cdots + c_da_{n-d}=0$
for all $n\ge d$. For, $A(t)(c_0+c_1t+\cdots + c_dt^d)$ vanishes in degree $\ge d$,
and is therefore a polynomial (reversing this argument proves the converse).

If $p(t)=\sum p_it^{d-i}$ is the characteristic polynomial of $M$, 
then $p(M)=0$, and
$$0 = v_0^TM^{n-d}p(M)v_a= p_0|L_n| + p_1|L_{n-1}| + \cdots + p_d|L_{n-d}|$$
proving the theorem.
\end{proof}

Another way of expressing $s(t)$, more useful in some ways, is as follows.

\begin{proposition}
Let $L$ be a regular language. Then there is an integer $D$ so that for each
value of $n$ mod $D$ either $|L_n|$ is eventually zero, or
there are finitely many constants $\lambda_i$ 
and polynomials $p_i$ so that
$|L_n|=p_1(n)\lambda_1^n + \cdots + p_k(n)\lambda_k^n$
for all sufficiently large $n$.
\end{proposition}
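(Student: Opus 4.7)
My plan is to start from the identity $|L_n| = v_0^T M^n v_a$ established in the proof of Theorem~\ref{regular_language_generating_function}, which reduces the proposition to analyzing the powers of $M$. The first step is to put $M$ in Jordan normal form over $\C$, writing $M = PJP^{-1}$. A single Jordan block of size $k$ with eigenvalue $\lambda$ has $n$-th power with $(i,j)$-entry $\binom{n}{j-i}\lambda^{n-(j-i)}$, a polynomial in $n$ of degree less than $k$ multiplied by $\lambda^n$. Hence every entry of $M^n$, and therefore $|L_n|$, admits an expression
$$|L_n| = \sum_\lambda q_\lambda(n)\lambda^n \qquad \text{for all } n\ge 0,$$
as $\lambda$ ranges over the distinct (generally complex) eigenvalues of $M$, with polynomial coefficients $q_\lambda$ whose degrees are bounded by the sizes of the corresponding Jordan blocks.

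For the refinement by arithmetic progressions, I would let $D$ be the least common multiple of the \emph{periods} of the strongly connected components of the parameterizing graph $\Gamma$, where the period of an SCC is the gcd of lengths of closed walks in it. Perron--Frobenius, applied to the restriction of $M$ to each SCC, ensures that its eigenvalues of maximum modulus take the form $\rho\zeta$ with $\rho>0$ real and $\zeta$ a root of unity whose order divides $D$. Fixing a residue $r$ modulo $D$ and writing $n = mD+r$, one has $\lambda^n = \lambda^r(\lambda^D)^m$, so the complex peripheral eigenvalues $\rho\zeta$ collapse to the real positive base $\rho^D$. After collecting terms and re-expressing in $n$, one obtains $|L_n| = \sum_i p_i(n)\lambda_i^n$ on this residue class with constants drawn from $\{\lambda^D : \lambda\in\mathrm{Spec}(M)\}$; if all these polynomials vanish identically we are in the ``eventually zero'' alternative, as happens for instance with the $3$-cycle automaton accepting only lengths divisible by $3$, where the contributions from the two primitive cube roots of unity cancel on the non-trivial residue classes.

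The main obstacle I expect is the bookkeeping required to show that a single $D$ suffices uniformly, keeping track of paths that traverse several SCCs of $\Gamma$ in sequence and reconciling their differing periods with the residue class of $n$. The Perron--Frobenius input is what makes the whole scheme work: it confines the peripheral oscillations of $M^n$ to a finite cyclic group of roots of unity, so a finite $D$ exists, and it ensures that the dichotomy between ``eventually zero'' and the clean polynomial--exponential form is the only one that arises within a residue class.
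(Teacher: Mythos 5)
The paper does not actually prove this proposition---it defers to Flajolet--Sedgewick, Thm.~V.3---and your outline (Jordan form for $M^n$, decomposition of $\Gamma$ into strongly connected components, $D$ equal to the lcm of their periods, Perron--Frobenius for the peripheral spectrum) is the skeleton of the standard proof of that result. But there is a genuine gap, and it sits exactly at the step you defer as ``bookkeeping.'' First, observe that if the $\lambda_i$ in the statement may be complex, your opening paragraph already proves everything with $D=1$ and the rest of the argument is idle. The actual content of the proposition---the content the paper uses in the very next sentence to get $C^{-1}n^k\lambda^n\le |L_{\le n}|\le Cn^k\lambda^n$, and again in Lemma~\ref{zeta_diverges} and Lemma~\ref{maximal_components_parallel}---is that on each residue class which is not eventually zero the dominant term is $c\,n^k\rho^n$ with $\rho>0$ real and $c>0$. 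Your mechanism for this is to raise eigenvalues to the $D$-th power, but Perron--Frobenius only controls the eigenvalues of \emph{maximal} modulus of each irreducible block: the subdominant eigenvalues of an irreducible non-negative integer matrix can be complex with argument an irrational multiple of $\pi$, and no power of them is real. So after restricting to a residue class you still have genuinely complex $\lambda_i$, and the residue-class decomposition has not bought what you need it to buy.

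Second, and more seriously, you have not ruled out cancellation at top order: on a fixed residue class $r$, the constants $\zeta^r$ contributed by the peripheral eigenvalues $\rho\zeta$ of the maximal components could a priori sum to zero against the coefficients coming from $v_0$, $v_a$ and the Jordan basis, leaving a non-real conjugate pair dominant even though $|L_n|$ is not eventually zero on that class. Excluding this requires a positivity input beyond linear algebra---for instance, a pumping argument (an accepted word of length $n_0\equiv r$ passing through a maximal component can be extended by closed walks, whose lengths realize every sufficiently large multiple of $D$ since the period of the component divides $D$, yielding a matching lower bound of the form $c'n^k\rho^n$ on that class), or an appeal to Pringsheim's theorem applied to the generating function of the regular language $L$ intersected with the words of length $\equiv r \bmod D$. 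Either would close the gap; without one of them the dichotomy in the statement is not established. The composition of several components in series does contribute to the polynomial factors $p_i$ and must be tracked, but that part really is bookkeeping once the dominant-eigenvalue issue is settled.
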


For a proof see e.g.\/ \cite{Flajolet_Sedgewick} Thm.~V.3. In particular,
either $|L_{\le n}|$ has polynomial growth, or $C^{-1}(n^k\lambda^n)\le |L_{\le n}|\le C(n^k\lambda^n)$
for some real $\lambda$ and integer $k$, and constant $C$.

\subsection{Cannon's theorem}

Let $S$ be a set. A total order $\prec$ on $S$ extends to a unique
lexicographic (or dictionary) order on $S^*$ as follows:
\begin{enumerate}
\item{the empty word precedes everything;}
\item{if $u$ and $v$ are both nonempty and start with different letters $s,t\in S$ then
$u \prec v$ if and only if $s \prec t$; and}
\item{if $u \prec v$ and $w$ is arbitrary, then $wu \prec wv$.}
\end{enumerate}
If $G$ is a group and $S$ is a generating set for $G$, there are finitely many geodesic words representing
any given element; the lexicographically first geodesic is therefore a canonical representative
for each element of $g$, and determines a language $L \subset S^*$ that bijects with $G$
under evaluation. We denote evaluation by overline, so if $u\in S^*$, we denote the
corresponding element of $G$ by $\overline{u}$. We similarly denote length of an element of
$S^*$ by $|\cdot|$. So we always have $|\overline{u}|\le |u|$ with
equality if and only if $u$ is geodesic.

Given $g\in G$ the {\em cone type} of $g$, denoted $\cone(g)$, is the set of $h\in G$
for which some geodesic from $\id$ to $gh$ passes through $g$. 
For any $n$, the {\em $n$-level} of $g$ is the set of $h$ in the ball $B_n(\id)$ 
such that $|gh|<|g|$. Cannon showed that the $n$ level (for $n$ sufficiently large)
determines the cone type, and therefore that there are only {\em finitely many
cone types}.

\begin{lemma}[Cannon \cite{Cannon} Lem.~7.1 p.~139]\label{bounded_level_determines_cone_type}
The $2\delta+1$ level of an element determines its cone type.
\end{lemma}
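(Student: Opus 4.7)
My plan is to argue by induction on $|h|$ that $\cone(g_1) \subseteq \cone(g_2)$, from which $\cone(g_1) = \cone(g_2)$ will follow by the symmetry of the hypothesis. The base case $h = \id$ is immediate since $\id$ lies in every cone type.

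For the inductive step I would take $h \in \cone(g_1)$ with $|h| = m \ge 1$, fix a geodesic word $s_1 \cdots s_m$ for $h$, and set $h' = s_1 \cdots s_{m-1}$. A short triangle-inequality argument gives $h' \in \cone(g_1)$, so the inductive hypothesis yields $h' \in \cone(g_2)$, i.e.\ $|g_2 h'| = |g_2| + m - 1$. Assuming for contradiction that $h \notin \cone(g_2)$, and noting that $|g_2 h|$ differs from $|g_2 h'|$ by at most $1$, I conclude $|g_2 h| \in \{|g_2| + m - 2,\ |g_2| + m - 1\}$; in particular $|g_2 h| \ge |g_2| + m - 2$.

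The heart of the argument is to produce a short element $w$ with $|w| \le 2\delta + 1$ that shortens $g_2$ (i.e.\ $|g_2 w| < |g_2|$); the equality of $(2\delta+1)$-levels will then force $|g_1 w| < |g_1|$. To build $w$, I would examine the geodesic triangle $\id,\ g_2,\ g_2 h$ with side lengths $|g_2|,\ m,\ |g_2 h|$ together with a chosen geodesic $\sigma$ from $\id$ to $g_2 h$. The Gromov product at $\id$ equals $\ell = (|g_2| + |g_2 h| - m)/2 \ge |g_2| - 1$, so the fellow-travelling consequence of $\delta$-thinness (a tripod/insize computation) will place the vertex $x := \sigma(|g_2| - 1)$ within distance $2\delta + 1$ of $g_2$. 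Setting $w := g_2^{-1} x$ then gives $|w| \le 2\delta + 1$ and $|g_2 w| = |x| = |g_2| - 1 < |g_2|$, so $w$ lies in the $(2\delta+1)$-level of $g_2$, and hence also of $g_1$.

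To close, since $x$ lies on the geodesic $\sigma$, I get $|w^{-1} h| = d(g_2 w,\ g_2 h) = |g_2 h| - |x| \le m$, and therefore
\[
|g_1 h| \le |g_1 w| + |w^{-1} h| \le (|g_1| - 1) + m = |g_1| + m - 1,
\]
contradicting $h \in \cone(g_1)$. The main obstacle I anticipate is pinning down the constant exactly in the bound $d(x, g_2) \le 2\delta + 1$: this rests on a careful insize/tripod estimate using the specific side lengths $|g_2|,\ m,\ |g_2 h|$ with $|g_2 h| \ge |g_2| + m - 2$, and the precise constant is sensitive to the hyperbolicity convention in use.
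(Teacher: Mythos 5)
Your proof is correct and follows essentially the same route as the paper's: induct on the length of the tail, locate the point $x$ at distance $|g_2|-1$ from the identity on a geodesic to $g_2h$, show $x$ lies within $2\delta+1$ of $g_2$ by thinness of the triangle $\id,\,g_2,\,g_2h$, and transfer the resulting shortcut to $g_1$ via the shared $(2\delta+1)$-level. The estimate $d(x,g_2)\le 2\delta+1$ that you flag as delicate does work out exactly with the paper's slim-triangle convention (this is in fact the one geometric step the paper's own proof leaves implicit), so there is no gap.
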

\begin{proof}
Let $g$ and $h$ have the same $2\delta+1$ level, and let $u,v$ be geodesics with
$\overline{u}=g$ and $\overline{v}=h$. Only $\id$ has an empty $2\delta+1$ level, so
we may assume $u,v$ both have length $\ge 1$. We prove the lemma by induction.
Suppose $uw$, $vw$ and $uws$ are geodesics, where $s\in S$. We must show that $vws$ is
a geodesic. Suppose to the contrary that there is some $\overline{w_1w_2} = \overline{vws}$
where $|w_1|=|v|-1$ and $|w_2|\le |w|+1$. Then $h^{-1}\overline{w}_1$ is in the
$2\delta+1$ level of $h$, which agrees with the $2\delta+1$ level of $g$, and therefore
$|gh^{-1}\overline{w}_1|<|g|$. But then concatenating a geodesic representative of
$gh^{-1}\overline{w}_1$ with $w_2$ gives a shorter path to $\overline{uws}$, certifying
that $uws$ is not geodesic, contrary to assumption.
\end{proof}

\begin{figure}[ht]
\centering
\labellist
\pinlabel $g$ at 70 107
\pinlabel $h$ at 303 82
\pinlabel $w_2$ at 250 160
\pinlabel $w_2$ at 14 185
\pinlabel $\overline{vws}$ at 343 245
\pinlabel $\overline{uws}$ at 107 270
\small\hair2pt
\endlabellist
\includegraphics[scale=0.5]{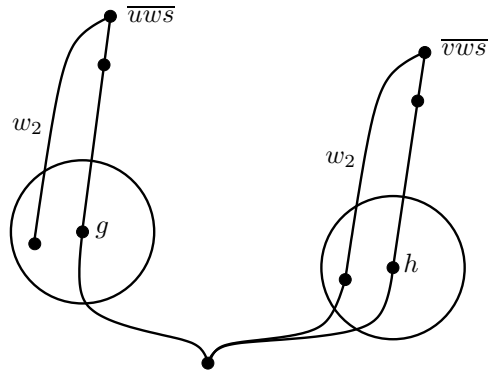}
\caption{A shortcut $w_2$ from the $2\delta+1$ level of $h$ to $\overline{vws}$ gives
a shortcut from the $2\delta+1$ level of $g$ to $\overline{uws}$. This
figure is adapted from \cite{Epstein_et_al}.}
\label{fig:shortcut}
\end{figure}
\medskip

The following theorem is implicit in \cite{Cannon}, though expressed there in somewhat different
language. 

\begin{theorem}[Cannon \cite{Cannon}]\label{Cannon_theorem}
Let $G$ be a hyperbolic group, and $S$ a symmetric generating set. 
Fix a total order $\prec$ on $S$.
Then the language of lexicographically first geodesics is prefix-closed and regular.
\end{theorem}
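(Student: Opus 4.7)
Prefix-closedness is a direct consequence of the lex-order axioms. Suppose $u = u'u''$ is the lex-first geodesic for $\overline u$ but $u'$ is not the lex-first geodesic for $\overline{u'}$, so that some $v' \prec u'$ of the same length satisfies $\overline{v'} = \overline{u'}$. Then $v'u''$ evaluates to $\overline u$ and has length $|u|$; applying axiom (3) with the maximal common prefix of $u'$ and $v'$ and then axiom (2) at the first differing letters yields $v'u'' \prec u'u'' = u$, contradicting the minimality of $u$.

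For regularity I would proceed in two stages. First, the language $\mathrm{Geod}\subset S^*$ of all geodesic words is regular: take states to be the finitely many cone types (Lemma~\ref{bounded_level_determines_cone_type}) and include an edge from $C$ to $C'$ labeled $s$ whenever $|gs|=|g|+1$ and $\cone(gs)=C'$ for some (equivalently, every) $g$ with $\cone(g)=C$. Well-definedness uses Cannon's lemma: both whether $s$ geodesically extends $g$ and the $(2\delta+1)$-level of $gs$ can be recovered from the cone type of $g$ together with $s$.

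To cut $\mathrm{Geod}$ down to the lex-first geodesics I would detect, via a second automaton, the geodesics that admit a strictly lex-earlier geodesic of the same evaluation. Consider the set $P\subset (S\times S)^*$ of pairs $(u,v)$ with $u,v$ both geodesic, $|u|=|v|$, $\overline u=\overline v$, and $v\prec u$. The key geometric input is the $2\delta$ fellow-traveler property: any two geodesics $u,v$ from $\id$ to a common $g$ satisfy $d(\overline{u(k)},\overline{v(k)})\le 2\delta$ for every $k\le|u|$. (Apply the thin-triangle condition to the degenerate triangle on vertices $\id, g, g$ with sides $u$, the reverse of $v$, and the constant path at $g$: $u(k)$ lies within $\delta$ of some $v(k')$, and comparison of word lengths gives $|k-k'|\le\delta$.) Consequently $\overline{u(k)}^{-1}\overline{v(k)}$ lies in the finite set $B_{2\delta}(\id)$, and one builds a finite automaton whose state records (a) the cone types of $\overline{u(k)}$ and $\overline{v(k)}$, (b) the element $\overline{u(k)}^{-1}\overline{v(k)}$, and (c) a ternary flag registering whether $u$ and $v$ have matched so far, or $v$ has already slipped strictly below $u$, or strictly above. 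The machine accepts when (b) is $\id$ and (c) says ``$v$ strictly below''.

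Projections of regular languages are regular (the standard nondeterministic-automaton simulation), so $\pi_1(P)\subset S^*$ is regular; this is exactly the set of non-lex-first geodesics, and the lex-first language is therefore $\mathrm{Geod}\setminus\pi_1(P)$, regular by closure under complement and intersection. The only step above that is more than bookkeeping inside a finite state machine is the $2\delta$ fellow-traveler bound---this is where hyperbolicity genuinely enters, and it is the pivot of the plan, though it is itself just a short consequence of $\delta$-thinness applied to a degenerate triangle.
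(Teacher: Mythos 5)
Your proof is correct, and it rests on exactly the same two geometric inputs as the paper's: finiteness of cone types (to recognize geodesity letter by letter) and the $2\delta$ synchronous fellow-traveler property (to confine the relevant word differences to the finite ball $B_{2\delta}(\id)$). Your fellow-traveler argument via the degenerate triangle is fine, and your prefix-closedness argument, which the paper dismisses as obvious, is spelled out correctly. Where you diverge is in the automaton-theoretic packaging. You build a synchronous two-tape machine recognizing the pairs $(u,v)$ of equal-length geodesics with $\overline{u}=\overline{v}$ and $v\prec u$, then invoke closure of regular languages under projection, complement and intersection --- the standard route in the theory of automatic groups \cite{Epstein_et_al}. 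The paper instead constructs the deterministic accepting automaton in one step: a state is the ``list'' $L(g)$ consisting of $\cone(g)$ together with the set of \emph{competitors} $h\in B_{2\delta}(\id)$ (elements $gh$ admitting a lex-earlier fellow-traveling geodesic) and their cone types. That list is precisely what the subset construction applied to your projected nondeterministic machine would record, so the two proofs are essentially determinizations of one another. Your version buys modularity and makes transparent that the fellow-traveler property is the only geometric content; the paper's buys an explicit description of the states (used implicitly later when $L$-cone types are introduced) and avoids the complementation step, which is worth something since complementation requires determinizing first and the paper wants a concrete parameterizing graph with all states accepting for the prefix-closed language. One last small remark: the paper actually needs a slightly stronger fellow-traveler statement (for geodesics $u_g$, $u_h$ whose endpoints satisfy $gs=hs'$, hence are distance at most $2$ apart, rather than equal), but your construction genuinely only compares geodesics with identical endpoints, so the same-endpoint version you prove suffices for your argument.
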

\begin{proof}
That this language is prefix-closed is obvious. We show it is regular by describing
an explicit parameterizing graph. 

As a warm up, we show first that the language of {\em all} geodesics is regular.
A parameterizing graph can be taken as follows. The vertices (all accept states)
are precisely the set of
cone types, and there is an edge labeled $s$ 
from a cone type of the form $\cone(g)$ to one of the form $\cone(gs)$ 
whenever $|gs| = |g| + 1$. By the definition of cone types, this is well-defined.
By Lemma~\ref{bounded_level_determines_cone_type}, the number of cone types is
finite, so this is a finite graph. By construction, this graph exactly
parameterizes the language of all geodesics.

Now fix a total order $\prec$ on $S$. For each $g\in G$, 
let $u_g$ be the lexicographically
first geodesic from $\id$ to $g$.
For each $g\in G$ a {\em competitor} of $g$
is some $h$ with $|h|=|g|$, with $u_h \prec u_g$, and for which 
$d(\overline{u_h|_{\le i}},\overline{u_g|_{\le i}})\le 2\delta$ for all $i$, where
$u_g|_{\le i}$ denotes the prefix of $u_g$ of length $i$, and similarly for
$u_h|_{\le i}$ (this is described by saying that {\em $u_h$ synchronously fellow-travels
$u_g$}). 

If there is some $g'$ with $|g'| = |g| + d(g,g')$ and
$|g'| = |h| + d(h,g')$ then by $\delta$-thinness and the definition of
geodesics, $u_h$ synchronously fellow-travels $u_g$.
It follows that for all $g\in G$ and $s\in S$ we have $u_{gs}=u_gs$ if and only if
$u_gs$ is a geodesic, and there is no competitor $h$ of $g$ and $s'\in S$
so that $hs'=gs$.

\medskip

Given $g\in G$ define $C(g) \subset B_{2\delta}(\id)$ to be the set of
$h$ for which $gh$ is a competitor of $g$. Associated to $g$ is the list $L(g)$
of pairs $(h\in C(g),\cone(gh))$ together with the cone type of $g$ itself.
Note that the set of possible lists $L(g)$ is {\em finite}. We can now define
a parameterizing graph by taking the vertices (all accept states) to be the
possible lists $L(g)$, and there is an edge labeled $s$ from a list of the
form $L(g)$ to a list of the form $L(gs)$ if and only if
$|gs| = |g| + 1$, and there is no $h\in C(g)$ and $s'\in S$ with $ghs'=gs$.
This is evidently a finite directed graph, which parameterizes the $u_g$; we must
show it is well-defined.

First of all, $h \in C(gs)$ if and only if one of the two following possibilities occurs:
\begin{enumerate}
\item{there is some $h'\in C(g)$ and $s'\in S\cap \cone(gh')$ with $gh's'=gsh$; or}
\item{there is some $s'\prec s$ in $S\cap \cone(g)$ with $gs'=gsh$.}
\end{enumerate}
Both of these possibilities depend only on $C(g)$, $\cone(g)$ or $\cone(gh')$ for some
$h' \in C(g)$, and not on $g$ itself.
Second of all, if $h\in C(gs)$, then $\cone(gsh)$ depends only on $\cone(gh')$ and $s'$
in the first case, and on $\cone(g)$ and $s'$ in the second case. This shows the
graph is well-defined, and completes the proof of the theorem.
\end{proof}

\begin{remark}
This completes the proof of Lemma~\ref{zeta_diverges}, and the subsequent results in
\S~\ref{Patterson_Sullivan_subsection}. The reader will note that the results in this
section do not depend on Lemma~\ref{zeta_diverges}, so our reasoning has not been circular.
\end{remark}

\subsection{Combings and combable functions}

\begin{definition}
Let $G$ be a group, and $S$ a generating set. A {\em combing} for $G$ (with
respect to $S$) is
a prefix-closed regular language $L\subset S^*$ which bijects with $G$ under
evaluation, and satisfies $|\overline{u}|=|u|$ for all $u\in L$ (i.e.\/ $L$ is
a language of geodesics).
\end{definition}

Theorem~\ref{Cannon_theorem} says that every hyperbolic group admits a combing.
If $L$ is a combing with respect to $S$, the {\em $L$-cone type} of $g$,
denoted $\cone_L(g)$, is the set of $h\in G$ for which the $L$-geodesic
evaluating to $gh$ contains a prefix (which is also an $L$-geodesic) evaluating
to $g$. There is a graph $\Gamma$ parameterizing $L$ with one vertex
for each $L$-cone type, and an edge from $\cone_L(g)$ to $\cone_L(gs)$ 
labeled $s$ whenever $s\in\cone_L(g)$.

\begin{remark}
The reader should be warned that many competing definitions of combing 
exist in the literature. 
\end{remark}

Suppose $L$ is a combing of $G$, and $\Gamma$ is a graph parameterizing
$L$, so that there is a (length-preserving) bijection between directed
paths in $\Gamma$ starting at the initial vertex, and words of $L$,
by reading the edge labels of the path. If $u\in L$, we let $\gamma(u)$ denote
the corresponding path in $\Gamma$, and $\gamma(u)_i$ the successive vertices
in $\Gamma$ visited by $\gamma(u)$.

\begin{definition}
A function $\phi:G \to \Z$ is {\em weakly combable} with respect
to a combing $L$ if there is a graph $\Gamma$ parameterizing $L$ and
a function $d\phi$ from the vertices of $\Gamma$ to $\Z$ so that
$\phi(\overline{u}) = \sum_i d\phi(\gamma(u)_i)$ for all $u\in L$.

A function $\phi$ is {\em combable} if it is weakly combable with respect
to some combing $L$, and if there is a constant $C$ so that 
$|\phi(gs)-\phi(g)|\le C$ for all $g\in G$ and $s\in S$; and it is
{\em bicombable} if it is combable, and further satisfies
$|\phi(sg) - \phi(g)|\le C$.
\end{definition}

\begin{remark}
It might be more natural to define a function $d\phi$ on the
{\em edges} of $\Gamma$ instead of its vertices; however, associated to
any directed graph $\Gamma$ there is another graph --- the {\em line graph}
of $\Gamma$ --- whose vertices are the edges of $\Gamma$, and whose edges
are the composable pairs of edges of $\Gamma$, and the line graph of
$\Gamma$ parameterizes $L$ if $\Gamma$ does.
\end{remark}

\begin{lemma}[Calegari--Fujiwara \cite{Calegari_Fujiwara} Lem.~3.8]
The property of being combable or bicombable does not depend on the choice
of a generating set or a combing.
\end{lemma}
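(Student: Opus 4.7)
The plan is to separate the invariance into two parts: the Lipschitz side-conditions $|\phi(gs)-\phi(g)|\le C$ and (for the bicombable case) $|\phi(sg)-\phi(g)|\le C$ on the one hand, and weak combability on the other. For the Lipschitz conditions, invariance under change of generating set is immediate: any new generator $s'\in S'$ is a bounded-length word in $S$ and vice versa, so the triangle inequality transfers uniform bounds on consecutive differences from one generating set to the other, both for left and for right multiplication. This incidentally shows that a combable $\phi$ is Lipschitz with respect to any word metric on $G$.

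The harder part is invariance of weak combability. I would first reduce to the case of a fixed generating set: given combings $L$ over $S$ and $L''$ over $S'$, apply Theorem~\ref{Cannon_theorem} to produce an auxiliary combing $L'$ over $S'$. Then I would show (i) that weak combability with respect to $L$ transfers to weak combability with respect to some combing over $S'$, by a substitution argument: replace each $s\in S$ appearing in $L$-words by a fixed word $w_s\in (S')^*$ representing $s$, blow up $\Gamma$ by inserting a path of length $|w_s|$ for each edge labeled $s$, extend $d\phi$ by putting its value on one vertex of each inserted path and zero elsewhere, and project to obtain a parametrizing graph of a combing over $S'$. Step (ii) is then that weak combability with respect to one combing over $S'$ transfers to any other, which reduces everything to the essential step.

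For the essential step (fixed generating set $S$, two combings $L$ and $L'$), let $\Gamma$ parametrize $L$ and $\Gamma'$ parametrize $L'$. I would build a synchronous product automaton $\Gamma''$ whose vertices are the pairs $(v,v')\in V(\Gamma)\times V(\Gamma')$ realized as $(\gamma(u_g)_{|g|},\gamma'(u'_g)_{|g|})$ for some $g\in G$, and whose $s$-labeled edges go from the pair for $g$ to the pair for $gs$ precisely when $u'_g\cdot s\in L'$. Setting $d\phi''((v,v')):=d\phi(v)$, the sum $\sum_i d\phi''(\gamma''(u')_i)$ collects the $L$-side contribution at each step and telescopes along the $\Gamma$-coordinate to recover $\phi(\overline{u'})=\sum_j d\phi(\gamma(u_{\overline{u'}})_j)$.

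The main obstacle I foresee is verifying that $\Gamma''$ is a genuine deterministic finite-state automaton parametrizing $L'$ --- namely, that the transition from $(v,v')$ under $s\in S$ is determined by $(v,v')$ and $s$ rather than by the particular $g$ realizing $(v,v')$. This fails naively, because $u_{gs}$ is usually not $u_g\cdot s$, so the new $\Gamma$-component is not the $s$-successor of $v$. Overcoming it exploits the $\delta$-hyperbolicity of $G$: two geodesic words representing the same element synchronously fellow-travel to within $O(\delta)$, so the evolution of the $\Gamma$-state along $L$-representatives of $\overline{u'|_{\le i}}$ becomes finite-state once $\Gamma$ is refined to a parametrizing graph whose vertices remember a bounded amount of history (say the last $O(\delta)$ letters) of the path --- a standard finite refinement. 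With this refinement in place the synchronous product is well-defined, and the telescoping then completes the proof.
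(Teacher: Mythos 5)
Your overall strategy (fellow-travelling plus a product automaton with extra finite bookkeeping) is the right one and is in the spirit of the cited proof, but two of your key steps do not work as stated. First, the telescoping claim is false. With $d\phi''((v,v'))=d\phi(v)$ and $v$ the \emph{final} $\Gamma$-state of $u_{g_i}$ (where $g_i=\overline{u'|_{\le i}}$), the sum $\sum_i d\phi(\gamma(u_{g_i})_{i})$ runs along the ``diagonal'' of a triangle of states, whereas $\phi(\overline{u'})=\sum_j d\phi(\gamma(u_{g_n})_j)$ is its last row; these agree only when $u_{g_{i-1}}$ is a prefix of $u_{g_i}$ for every $i$, which is exactly the failure you yourself flag. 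The correct move is to define the new derivative at step $i$ to be the increment $\phi(g_i)-\phi(g_{i-1})$ (which telescopes by definition) and then to prove that this increment is a function of the state of a finite automaton. Your refinement ``remember the last $O(\delta)$ letters'' does not achieve this: $u_{g_{i-1}}$ and $u_{g_i}$ fellow-travel in $G$, but their paths in $\Gamma$ may disagree from the very first letter, so neither the new $\Gamma$-state nor the difference of the two $d\phi$-sums is computable from one $\Gamma$-state plus bounded history. One must instead track, as part of the state, the $\Gamma$-states at time $i$ of \emph{all} $L$-words fellow-travelling $u'|_{\le i}$ (the ``collection of automata'' in the sketch above), together with the relative values $\phi(\overline{u_h|_{\le i}})-\phi(\overline{u_{h'}|_{\le i}})$ of the partial sums, which are genuine values of $\phi$ by prefix-closure of $L$.

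Second, and relatedly, your decomposition into ``Lipschitz conditions (easy, separate)'' plus ``weak combability (hard, independent)'' is not available. The boundedness of the relative values above --- which is what makes the new state space finite --- comes precisely from the hypothesis $|\phi(gs)-\phi(g)|\le C$ combined with fellow-travelling; weak combability alone gives no control on $\phi(g)-\phi(h)$ for nearby $g,h$ lying on different combing paths, and the transfer argument genuinely breaks without it (weak combability by itself is not asserted to be invariant). A smaller issue: your substitution step (i) replaces letters of $S$ by words in $S'$ and so produces a language of non-geodesic words, which is not a combing in the sense defined here; in the actual proof the change of generating set and the change of combing are handled in one stroke, using that $L'$-words are uniform quasigeodesics for $S$ and hence synchronously fellow-travel the $L$-words with the same evaluation.
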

The proof proceeds along the same lines as Theorem~\ref{Cannon_theorem}.
The key point is that words in $L$ are (uniformly) quasigeodesic with
respect to $S'$, and therefore stay within a bounded distance of words in $L'$
with the same evaluation. Therefore an automaton reading the letters
of an $L'$-word can keep track of the states of
a collection of automata simultaneously reading nearby $L$-words, and
keeping track of how $\phi$ changes as one goes along.
See \cite{Calegari_Fujiwara} for details.

\begin{example}\label{word_length_example}
Word length in any generating set is bicombable. In fact, if $S$ is a
(possibly unsymmetric) set which generates $G$ as a semigroup, word length
in $S$ is bicombable. One can generalize word length by giving different generators
(and corresponding edges in the Cayley graph) different lengths; 
providing the lengths are all integral and positive, the resulting (geodesic)
word length is bicombable.
\end{example}

\begin{example}\label{sum_example}
The sum or difference of two (bi)combable functions is (bi)combable.
\end{example}

\begin{example}
The following definition is due to Epstein--Fujiwara \cite{Epstein_Fujiwara} (also
see \cite{Brooks}).
Let $\sigma$ be a path in $C_S(G)$. A {\em copy} of $\sigma$ is a translate
$g\sigma$ for some $g\in G$. Given a path $\gamma$ in $c_S(G)$, define
$c_\sigma(\gamma)$ to be the maximal number of disjoint
copies of $\sigma$ in $\gamma$, and
for $g\in G$ define the {\em small counting function} $c_\sigma:G \to \Z$ by
the formula
$$c_\sigma(g)=|g| - \inf_\gamma(|\gamma|-c_\sigma(\gamma))$$

Counting functions are bicombable. In fact, we can add $\sigma$ to $S$
as a (semigroup) generator, but insist that the (directed) edges labeled
$\sigma$ have length $|\sigma|-1$ instead of $1$; this defines a new distance
function $|\cdot|_\sigma$ which is bicombable (by Example~\ref{word_length_example}), 
and therefore so is the difference $|\cdot| - |\cdot|_\sigma = c_\sigma$
(by Example~\ref{sum_example}).
\end{example}

Many variations on this idea are possible; for instance, the ``big'' counting
functions $C_\sigma$ which count {\em all} copies of $\sigma$ in $\gamma$,
not just the maximal number of disjoint copies.

\subsection{Markov chains}\label{Markov_chain_subsection}

A directed graph $\Gamma$ is sometimes called a {\em topological Markov chain}. A topological
Markov chain can be promoted to a genuine (stationary) Markov chain by assigning
probabilities to each edge in such a way that the probabilities on the
edges leaving each vertex sum to $1$. Recall that we write the adjacency matrix
as $M$; we think of this as an endomorphism of the vector space $V$ spanned by the
states of $\Gamma$. Let $\1$ denote the vector with all components equal to $1$, and
let $\iota$ denote the vector corresponding to the initial state.

Two states in a topological Markov chain are 
said to be {\em communicating} if there is a directed
path from each to the other. The property of being communicating is an equivalence relation.
We write $C_1 \to C_2$ for equivalence classes $C_1$ if there is a directed path from some (any)
vertex of $C_1$ to some (any) vertex of $C_2$; observe that $\to$ is a partial order.
We call each equivalence class a {\em component}. 

The induced (directed) subgraph associated to a component $C$ is itself a topological
Markov chain. Its adjacency matrix $M_C$ has the property that for any $i$ and $j$ there is an
$n$ (in fact, infinitely many $n$) so that $(M_C^n)_{ij}$ is positive; one says such a
Markov chain is {\em irreducible}. If there is a fixed $n$ so that $(M_C^n)_{ij}$ is 
positive for all $i$, $j$ we say the Markov chain is {\em aperiodic}; this holds exactly
when the gcd of the lengths of all loops in $C$ is $1$. A Markov chain (on a finite
state space) which is both irreducible and aperiodic is {\em ergodic}.

\begin{lemma}[Perron--Frobenius]\label{Perron_Frobenius}
Let $M$ be a real matrix with positive entries. 
Then there is a unique eigenvalue $\lambda$ of biggest
absolute value, and this eigenvalue is real and positive. Moreover, $\lambda$ is
a simple root of the characteristic polynomial, and it has a right (left) eigenvector
with all components positive, unique up to scale. 
Finally, any other non-negative right (left) eigenvector is a multiple of the $\lambda$
eigenvector.
\end{lemma}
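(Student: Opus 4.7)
The plan is to combine a Brouwer fixed-point argument for existence with a pairing argument (using both a right and a left positive eigenvector, and strict positivity of the entries of $M$) for everything else.

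First I would produce a strictly positive right eigenvector. The map $v \mapsto Mv/\sum_i (Mv)_i$ is a continuous self-map of the simplex $\Delta = \{v \in \R^n : v_i \ge 0,\; \sum_i v_i = 1\}$, well-defined because positivity of the entries of $M$ forces $Mv > 0$ coordinatewise for every $v \in \Delta$. Brouwer's theorem provides a fixed point $v_*$, giving $Mv_* = \lambda v_*$ with $\lambda = \sum_i (Mv_*)_i > 0$ and $v_* = \lambda^{-1}Mv_* > 0$. Applying the same construction to $M^T$ yields a strictly positive left eigenvector $w_*^T$ with some positive eigenvalue $\lambda'$; pairing $\lambda' w_*^T v_* = w_*^T M v_* = \lambda w_*^T v_*$ and using $w_*^T v_* > 0$ gives $\lambda' = \lambda$.

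Next I would use $w_*$ to dominate all other eigenvalues. If $Mu = \mu u$ for complex $\mu$ and $u \ne 0$, the componentwise triangle inequality gives $|\mu|\,|u| \le M|u|$; pairing with $w_*$ (noting $w_*^T|u| > 0$ since $w_* > 0$ and $|u|$ is nonzero non-negative) yields $|\mu| \le \lambda$. If moreover $|\mu| = \lambda$, then $w_*^T(M|u| - \lambda|u|) = 0$ with $M|u| - \lambda|u| \ge 0$ and $w_* > 0$ forces $M|u| = \lambda|u|$ coordinatewise, so every componentwise triangle inequality $\bigl|\sum_j M_{ij} u_j\bigr| \le \sum_j M_{ij}|u_j|$ is an equality; since every $M_{ij}$ is strictly positive, this forces all entries $u_j$ to share a common complex phase $e^{i\theta}$. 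Writing $u = e^{i\theta}|u|$ and substituting into $Mu = \mu u$ gives $M|u| = \mu|u|$, and since $M|u| > 0$ and $|u| > 0$ are strictly positive real vectors, $\mu$ must be a positive real; combined with $|\mu| = \lambda$ this yields $\mu = \lambda$.

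The preceding paragraph also pins down the eigenspace: every complex $\lambda$-eigenvector is $e^{i\theta}$ times a strictly positive real eigenvector. For uniqueness of positive real $\lambda$-eigenvectors (hence geometric simplicity), if $p, q > 0$ both satisfy $Mp = \lambda p$ and $Mq = \lambda q$, pick $c > 0$ so that $p - cq$ has a zero coordinate but is non-negative; then $p - cq$ is a non-negative $\lambda$-eigenvector, and any non-negative eigenvector $x$ with $Mx = \mu x$ satisfies $\mu x = Mx > 0$, hence is either zero or strictly positive, forcing $p = cq$. This same observation also yields the non-negative eigenvector claim: any non-negative right eigenvector is automatically strictly positive, its eigenvalue equals $\lambda$ by pairing with $w_*$, and it is therefore proportional to $v_*$. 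For algebraic simplicity I would rule out a Jordan block by contradiction: if $(M - \lambda I)\xi = v_*$ for some $\xi$, then $0 = w_*^T(M - \lambda I)\xi = w_*^T v_* > 0$. The left-eigenvector versions of all statements follow by applying the argument to $M^T$. The main technical obstacle is the phase argument upgrading $|\mu| = \lambda$ to $\mu = \lambda$, which relies crucially on strict positivity of \emph{every} entry of $M$ to force equality in each componentwise triangle inequality; this is exactly what fails for merely non-negative matrices, whose peripheral spectrum may contain a full set of roots of unity times $\lambda$.
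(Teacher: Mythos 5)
Your proof is correct and complete, but it takes a genuinely different route from the paper. The paper argues dynamically: since $M$ maps the positive orthant strictly into itself, the induced projective map sends the standard simplex strictly into its interior, yielding a (claimed unique) attracting fixed point, i.e.\ the positive eigenvector; simplicity and maximality of $\lambda$ are then deduced by restricting the projective action to planes through this eigenvector ($\RP^1$'s) and noting that a rotation or a second fixed point would contradict the attracting behaviour. You instead give the classical Wielandt-style argument: Brouwer for existence of a positive right eigenvector and a positive left eigenvector $w_*$, then the pairing $u\mapsto w_*^Tu$ as a strictly positive linear functional to dominate all other eigenvalues, the equality case of the componentwise triangle inequality (using strict positivity of \emph{every} entry) to force a common phase and hence exclude peripheral eigenvalues other than $\lambda$, the ``subtract until a coordinate vanishes'' trick for uniqueness of the positive eigenvector, and $w_*^Tv_*>0$ to kill any Jordan block. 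Your version is more self-contained and airtight on exactly the points the paper leaves implicit --- the paper's assertion that the attracting fixed point is \emph{unique} really needs a contraction argument (e.g.\ in the Hilbert projective metric) or something like your subtraction argument, and its $\RP^1$ discussion of simplicity is terse --- and your phase analysis makes transparent why the conclusion degrades for merely non-negative irreducible matrices, which is relevant to the paper's subsequent remarks. The paper's approach, in exchange, is shorter and gives a geometric picture (contraction of the simplex) that motivates the later use of the projections $\rho$ and $\ell$ onto the Perron eigenspaces.
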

\begin{proof}
Since the entries of $M$ are positive, $M$ takes the positive orthant
strictly inside itself. The projectivization of the positive orthant is a simplex,
and therefore $M$ takes this simplex strictly into its interior. It follows that
$M$ has a unique attracting fixed point in the interior this simplex; 
this fixed point corresponds to the unique eigenvector $v$ (up to scale) with non-negative entries, 
and its entries are evidently all positive, and its associated eigenvalue $\lambda$
is real and positive.

If $\pi$ is any plane containing this unique positive eigenvector, the projectivization
of $\pi$ is an $\RP^1$; since the eigenvector becomes an attracting fixed point in this
$\RP^1$, it is not the only fixed point. This shows that $\lambda$ is a simple eigenvalue;
a similar argument shows that $-\lambda$ is not an eigenvalue.

Let $\mu$ be any other eigenvalue. If $\mu$ is real, then $|\mu|<\lambda$. Suppose
$\mu$ is complex, acting as composition of a dilation with a rotation on some plane
$\pi$. If $|\mu|=\lambda$ then the restriction of $M$ to $\pi \oplus \langle v \rangle$
acts projectively like a rotation; but this contradicts the fact that $v$ is 
a projective attracting fixed point. This proves the theorem.
\end{proof}

If $M$ is non-negative, there is still a non-negative real eigenvector $v$
with a real positive eigenvalue $\lambda$, and every other eigenvalue $\mu$ satisfies
$|\mu|\le \lambda$. In this generality, $\lambda$ might have multiplicity $>1$, and the Jordan
block associated to $\lambda$ might not be diagonal. However if $M$ is {\em irreducible},
then $\lambda$ has multiplicity $1$, the eigenvector $v$ is strictly positive, 
and every other eigenvalue with absolute value $\lambda$ is simple and
of the form $e^{2\pi i/k}\lambda$. These facts can be proved similarly to the 
proof of Lemma~\ref{Perron_Frobenius}

\medskip 

Now let $G$ be a hyperbolic group, $L$ a combing with respect to some generating set,
and $\Gamma$ a graph parameterizing $L$. Let $\Gamma_C$
be the quotient directed graph whose vertices are the components of $\Gamma$.
Note that $\Gamma_C$ contains no directed loops.
Associated to each vertex of $\Gamma_C$ is an adjacency matrix $M_C$ which
has a unique maximal real eigenvalue $\lambda(C)$ of multiplicity $1$. We let
$\lambda = \max_C \lambda(C)$, and we call a component {\em maximal} if $\lambda(C)=\lambda$.

The next lemma is crucial to what follows, and depends on Coornaert's estimate
of the growth function (i.e.\/ Corollary~\ref{Coornaert_growth_rate}).

\begin{lemma}\label{maximal_components_parallel}
The maximal components do not occur in parallel; that is, there is no directed
path from any maximal component to a distinct maximal component.
\end{lemma}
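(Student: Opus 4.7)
The plan is to argue by contradiction, using Coornaert's growth estimate (Corollary~\ref{Coornaert_growth_rate}) as the essential input. First I would observe that we may assume every vertex of $\Gamma$ is reachable from the initial vertex (pruning otherwise does not affect $L$), and identify $\lambda=e^h$. Indeed, $|L_n|=|G_n|$ by the bijection, and the Proposition following Theorem~\ref{regular_language_generating_function} expresses $|L_n|$ as a finite sum $\sum_i p_i(n)\lambda_i^n$ on each residue class mod $D$; comparing with the two-sided bound $C^{-1}e^{hn}\le |G_{\le n}|\le Ce^{hn}$ from Corollary~\ref{Coornaert_growth_rate} forces $\lambda = e^h$ to be the spectral radius of $M$, and forces the polynomial in the leading term to have degree zero on at least one residue class.

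Now suppose for contradiction that $C_1,C_2$ are distinct maximal components with a directed path from $C_1$ to $C_2$. Since every vertex is reachable from $v_0$, we can fix a directed path $\pi_0$ from $v_0$ to some vertex $w_0\in C_1$, and a directed path $\pi_1$ from some vertex $w_1\in C_1$ to some vertex $w_2\in C_2$. Let $c_0=|\pi_0|+|\pi_1|$. For any $k$ and any $n$, walks in $\Gamma$ of length $n$ of the form ``$\pi_0$, then a walk of length $k$ in $C_1$ from $w_0$ to $w_1$, then $\pi_1$, then a walk of length $n-c_0-k$ in $C_2$ starting at $w_2$'' contribute to $|L_n|$, and different $k$ produce distinct walks.

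The next step is to lower-bound the internal walk counts. Because $M_{C_i}$ is irreducible with spectral radius $\lambda$, Perron--Frobenius (in the form stated just after Lemma~\ref{Perron_Frobenius}) together with the spectral structure of irreducible non-negative matrices gives periods $d_i$ and constants $c_i>0$ such that the number of walks of length $m$ from $w_0$ to $w_1$ in $C_1$ is $\ge c_1\lambda^m$ whenever $m$ lies in an appropriate residue class mod $d_1$, and similarly for walks in $C_2$ of length $m'$ starting at $w_2$ (summed over endpoints, using that the Perron right eigenvector is positive). Summing over those $k$ in $[0,n-c_0]$ for which both $k$ and $n-c_0-k$ lie in the good residue classes yields
\[
|L_n| \;\ge\; \sum_k c_1\lambda^k \cdot c_2\lambda^{n-c_0-k} \;\ge\; c_3\, n\, \lambda^n
\]
for all sufficiently large $n$ in some arithmetic progression. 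This contradicts $|L_n|\le |G_{\le n}|\le C\lambda^n$ from Corollary~\ref{Coornaert_growth_rate}, completing the proof.

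The main obstacle is purely bookkeeping: guaranteeing that the lower bounds on walks inside $C_1$ and $C_2$ hold simultaneously for enough values of $k$. This is handled by restricting $n$ to a suitable arithmetic progression determined by the periods $d_1,d_2$ of the two irreducible blocks and by the lengths $|\pi_0|,|\pi_1|$; since there are only finitely many residue classes and each contributes at least $\Theta(n\lambda^n)$ walks, the contradiction with the upper bound $C\lambda^n$ is robust to these combinatorial adjustments.
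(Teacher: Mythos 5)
Your proposal is correct and follows essentially the same route as the paper: both derive the contradiction from Coornaert's growth estimate (Corollary~\ref{Coornaert_growth_rate}) by showing that a directed path between two distinct maximal components would force $|L_n|\gtrsim n\lambda^n$ along a subsequence, which is incompatible with $|G_{\le n}|\le Ce^{hn}$ once one knows $\lambda=e^h$. The only difference is that you establish the lower bound by an explicit Perron--Frobenius path-counting argument, whereas the paper simply invokes the standard asymptotic form $p(n)\lambda^n+O(q(n)\xi^n)$ with $\deg p$ equal to one less than the longest chain of maximal components.
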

\begin{proof}
Since there is a directed path from the initial vertex to every other vertex,
the number of paths of length $n$ is of the form $p(n)\lambda^n+O(q(n)\xi^n)$
for polynomials $p$, $q$ and $\xi<\lambda$, where $\lambda$ is as
above. Moreover, the degree of $p$ is one less
than the length of the biggest sequence of maximal components 
$C_0 \to C_1 \to \cdots \to C_{\text{deg}(p)}$.
The number of paths of length $n$ is equal to the number of elements of $G$ of
length $n$, so Corollary~\ref{Coornaert_growth_rate} implies that
the degree of $p$ is zero.
\end{proof}

It follows that all but exponentially few paths $\gamma$ of length $n$ in $\gamma$
are entirely contained in one of the maximal components of $\Gamma$, except for a
prefix and a suffix of length $O(\log(n))$. Consequently, the properties of
a ``typical'' path in $\Gamma$ can be inferred from the properties of a ``typical''
path conditioned to lie in a single component.

For any vector $v$, the limits 
$$\rho(v):=\lim_{n\to\infty} n^{-1} \sum_{i=0}^{n-1}
\lambda^{-i}M^iv, \quad \ell(v):=\lim_{n\to\infty} n^{-1} \sum_{i=0}^{n-1} \lambda^{-i}(M^T)^iv$$
exist, and are the projections onto the left and right $\lambda$-eigenspaces respectively. 
Heuristically, $\ell(v)$ is the distribution of endpoints of long
paths that start with distribution $v$, and $\rho(v)$ is the distribution of
starting points of long paths that end with distribution $v$.

Recall that $\iota$ denotes the vector with a $1$ in the coordinate corresponding
to the initial vertex and $0$s elsewhere, 
and $\1$ denotes the vector with all coordinates equal to $1$.
Define a measure $\mu'$ on the vertices of $\Gamma$ by $\mu'_i = \ell(\iota)_i\rho(\1)_i$, 
and scale $\mu'$ to a probability measure $\mu$.
Define a matrix $N$ by $N_{ij}=M_{ij}\rho(\1)_j/\lambda\rho(\1)_i$ if
$\rho(\1)_i\ne 0$, and define $N_{ij}=\delta_{ij}$ otherwise.

\begin{lemma}\label{N_preserves_mu}
The matrix $N$ is a stochastic matrix (i.e.\/ it is non-negative, and the rows
sum to $1$) and preserves the measure $\mu$.
\end{lemma}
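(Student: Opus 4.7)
The plan is to verify both claims by direct matrix computation, using that $\rho(\1)$ is a right $\lambda$-eigenvector of $M$ and $\ell(\iota)$ is a left $\lambda$-eigenvector. These eigenvector properties will come from the Ces\`aro definitions: a telescoping gives
$$M\rho(v) - \lambda\rho(v) = \lim_n n^{-1}\lambda\bigl(\lambda^{-n}M^n v - v\bigr),$$
so the equality $M\rho(v) = \lambda \rho(v)$ follows as soon as $\|\lambda^{-n}M^n v\|$ is bounded. I would note in passing that this boundedness relies on $\lambda$ being semisimple as an eigenvalue of $M$, which is precisely what Lemma~\ref{maximal_components_parallel} guarantees (a pair of parallel maximal components would create a nontrivial Jordan block and the Ces\`aro averages would diverge polynomially). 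The same computation applied to $M^T$ gives $\ell(\iota)^T M = \lambda \ell(\iota)^T$. Both $\rho(\1)$ and $\ell(\iota)$ are nonnegative as Ces\`aro limits of nonnegative vectors, which will be used below.

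\textbf{Stochasticity.} For $i$ with $\rho(\1)_i \ne 0$, I would compute
$$\sum_j N_{ij} = \frac{1}{\lambda\rho(\1)_i}\sum_j M_{ij}\rho(\1)_j = \frac{(M\rho(\1))_i}{\lambda\rho(\1)_i} = 1$$
directly from the eigenvector equation. For the remaining rows, with $\rho(\1)_i = 0$, the row of $N$ is the $i$th standard basis vector by fiat, so the row sum is trivially $1$.

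\textbf{Invariance of $\mu$.} Let $Z$ denote the normalizing constant, so $\mu_i = \ell(\iota)_i\rho(\1)_i/Z$. Since $\mu_i = 0$ at every index with $\rho(\1)_i = 0$, the ad hoc definition of $N$ on those rows contributes nothing, and
$$\sum_i \mu_i N_{ij} = \frac{\rho(\1)_j}{\lambda Z}\sum_{i:\rho(\1)_i\ne 0}\ell(\iota)_i M_{ij}.$$
The target is $\mu_j = \ell(\iota)_j\rho(\1)_j/Z$. When $\rho(\1)_j = 0$ both sides vanish. When $\rho(\1)_j > 0$ the task reduces to showing that the restricted sum coincides with the full sum $\sum_i \ell(\iota)_i M_{ij} = \lambda\ell(\iota)_j$. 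This is the only genuinely subtle step, and the main obstacle that the degenerate definition of $N$ forces one to address: one must check that each $i$ with $\rho(\1)_i = 0$ satisfies $M_{ij} = 0$. I would deduce this from the right eigenvector equation applied at such an $i$, namely $\sum_k M_{ik}\rho(\1)_k = \lambda\rho(\1)_i = 0$; by nonnegativity every term vanishes, so in particular $M_{ij}\rho(\1)_j = 0$, and since $\rho(\1)_j > 0$ we conclude $M_{ij} = 0$. Hence the restricted and full sums agree, and the computation closes to $\sum_i\mu_i N_{ij} = \ell(\iota)_j\rho(\1)_j/Z = \mu_j$.
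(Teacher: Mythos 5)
Your proof is correct and follows the same direct computation as the paper: the row sums are handled via the right eigenvector equation $M\rho(\1)=\lambda\rho(\1)$, and invariance of $\mu$ via the left eigenvector equation for $\ell(\iota)$. You are in fact somewhat more careful than the paper, whose displayed invariance computation silently cancels $\rho(\1)_i$ over all $i$; your observation that $M_{ij}=0$ whenever $\rho(\1)_i=0$ and $\rho(\1)_j>0$ (forced by nonnegativity of the terms in $(M\rho(\1))_i=0$) is exactly what justifies restricting to the nondegenerate rows, and your remark that Lemma~\ref{maximal_components_parallel} is what guarantees the Ces\`aro limits exist (semisimplicity of $\lambda$) is also apt.
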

\begin{proof}
If $\rho(\1)_i=0$ then $\sum_j N_{ij}=1$ by fiat. Otherwise
$$\sum_j N_{ij} = \sum_j \frac {M_{ij}\rho(\1)_j} {\lambda\rho(\1)_i} = \frac{(M\rho(\1))_i}
{\lambda\rho(\1)_i}=1$$.
To see that $N$ preserves $\mu'$ (and therefore $\mu$), we calculate
$$\sum_i \mu'_i N_{ij} =\sum_i \ell(\iota)_i\rho(\1)_i \frac{M_{ij}\rho(\1)_j}{\lambda\rho(\1)_i}
= \sum_i \frac{\ell(\iota)_iM_{ij}}{\lambda} \rho(\1)_j = \ell(\iota)_j\rho(\1)_j=\mu'_j$$
\end{proof}

In words, $\mu_i$ is the probability that a point on a path will be in state $i$, conditioned
on having originated at the initial vertex in the distant past, and conditioned on having
a distant future.

\subsection{Shift space}

For each $n$ let $Y_n$ denote the set of paths in $\Gamma$ of length $n$
starting at the initial vertex, and let $X_n$ denote the set of {\em all} paths
in $\Gamma$ of length $n$. We can naturally identify $X_0$ with the vertices of
$\Gamma$.

Restricting to an initial subpath defines an inverse system 
$\cdots \to X_n \to \cdots \to X_1 \to X_0$, and the inverse limit $X_\infty$ is
the space of (right) infinite paths. Similarly define $Y_\infty \subset X_\infty$.
If we give each $X_n$ and $Y_n$ the discrete topology, then $X_\infty$ and $Y_\infty$
are Cantor sets.

If $x:=x_0,x_1,\cdots$ and $x':=x_0',x_1'\cdots$ are two elements of $X_\infty$, 
we define $(x|x')$ to be the first index at which $x$ and $x'$ differ, 
and define a metric on $X_\infty$ by setting $d(x,x')=a^{-(x|x')}$ for some $a>1$
(the notation $(\cdot|\cdot)$ is deliberately intended to suggest a resemblance
to the Gromov product).
If we like, we can define $\overline{X}=\cup_i X_i \cup X_\infty$
and metrize it (as a compact space, in which each $X_n$ sits as a discrete subset)
in the same way. Similarly, give $Y_\infty$ the induced metric, and define
$\overline{Y}=\cup_i Y_i \cup Y_\infty$ likewise.

The shift operator $T:X_\infty \to X_\infty$ is defined by $(Tx)_i=x_{i+1}$.
We define a probability measure $\mu$ on each $X_n$ by 
$\mu(x_0\cdots x_n)=\mu_{x_0}N_{x_0x_1}N_{x_1x_2}\cdots N_{x_{n-1}x_n}$ where
$\mu$ and $N$ are the measure and stochastic matrix whose properties are
given in Lemma~\ref{N_preserves_mu}. 
By the definition of an inverse limit, there is a map $X_\infty \to X_n$ for each $n$
which takes an infinite path to its initial subpath of length $n$; the preimages of
subsets of the $X_n$ under such maps are a basis for the topology on $X_\infty$, called
{\em cylinder sets}. The measures $\mu$ as above let us define a Borel probability measure
$\mu$ on $X_\infty$ by first defining it on cylinder sets (note that the definitions of $\mu$
on different $X_n$ are compatible) and extending it to all Borel sets in the standard way;
Lemma~\ref{N_preserves_mu} implies that $\mu$ is $T$-invariant (i.e.\/ $\mu(A)=\mu(T^{-1}(A))$
for all measurable $A\subset X_\infty$).

There is a bijection between $Y_n$ and $L_n$, and by evaluation with $G_n$.
This map extends continuously to a map $E:\overline{Y} \to \overline{G}$, by sending
$Y_\infty \to \partial_\infty G$. 

\begin{lemma}
The map $E:\overline{Y} \to \overline{G}$ is surjective, Lipschitz in the $a$-metric,
and bounded-to-one.
\end{lemma}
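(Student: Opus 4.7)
The plan is to verify the three conclusions in order: Lipschitz first (which will supply continuity for free), then surjectivity via compactness, and finally bounded-to-one by an inverse-limit pigeonhole argument.

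For the Lipschitz bound I would take $y,y'\in\overline Y$ with $(y|y')=k$; the corresponding paths in $\Gamma$ share an initial segment of length $\sim k$, reading a common $L$-prefix that evaluates to a single element $g\in G$ with $|g|\ge k-O(1)$. Both $E(y)$ and $E(y')$ are endpoints of $L$-words extending this prefix, and since $L$ consists of geodesics, $g$ lies on a geodesic (ray) from $\id$ to each. Applying $\delta$-thinness to the (possibly ideal) triangle with vertices $\id,E(y),E(y')$ forces the third side to pass within $O(\delta)$ of $g$, so the Gromov product satisfies $(E(y)|E(y'))_{\id}\ge k-C$ for some $C=C(\delta)$. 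Lemma~\ref{a_comparison_lemma} then converts this into $d_G^a(E(y),E(y'))\le \lambda a^{C}\cdot a^{-k} = (\lambda a^C)\,d_Y(y,y')$, which is the desired Lipschitz estimate.

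For surjectivity, the bijection $L\leftrightarrow G$ immediately gives $G\subset E(\overline Y)$. Given $x\in\partial_\infty G$, I would pick $g_n\to x$ with $|g_n|\to\infty$; the combing paths $y^{(n)}\in Y_{|g_n|}\subset\overline Y$ satisfy $E(y^{(n)})=g_n$, and compactness of $\overline Y$ produces a subsequential limit $y$, necessarily in $Y_\infty$ since $|g_n|\to\infty$. Continuity of $E$, now available from the Lipschitz bound, yields $E(y)=x$.

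The main obstacle is bounded-to-one. Fibres over $G$ are singletons by the combing bijection. For $x\in\partial_\infty G$, any two $y,y'\in E^{-1}(x)$ determine geodesic rays from $\id$ both asymptotic to $x$, so a standard thin-triangle argument delivers synchronous fellow-traveling: if $g_n,g_n'\in G_n$ denote the evaluations of the length-$n$ prefixes, then $d(g_n,g_n')\le 2\delta$ for every $n$. Setting $K:=|B_{2\delta}(\id)|$ and exploiting the $Y_n\leftrightarrow G_n$ bijection, the set $A_n\subset Y_n$ of length-$n$ prefixes of paths in $E^{-1}(x)$ satisfies $|A_n|\le K$. The restriction maps embed $E^{-1}(x)$ into $\varprojlim A_n$, and I claim the inverse limit has at most $K$ elements: given $K+1$ distinct sequences, choose $N$ exceeding every level at which a pair first differs, and note that their level-$N$ prefixes must be pairwise distinct, since two coinciding prefixes at level $N$ force coincidence at every earlier level. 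This contradicts $|A_N|\le K$. The subtle point is that uniformly bounded level sets do not a priori bound the inverse limit of an arbitrary directed system; what rescues the argument here is that the transition maps are literal restriction, so that level-$N$ equality propagates downwards, making the pigeonhole work.
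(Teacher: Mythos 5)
Your proof is correct and takes essentially the same route as the paper: the Lipschitz bound comes from comparing Gromov products, $(E(y)|E(y'))\ge (y|y')-O(\delta)$, using that combing words are geodesics through the common prefix; surjectivity follows from compactness of $\overline{Y}$, continuity of $E$, and density of the image; and the fiber bound is by the cardinality of an $O(\delta)$-ball in $G$. The only cosmetic difference is in the last step, where you pigeonhole prefixes at a single large level $N$ via the bijection $Y_N\leftrightarrow G_N$ rather than using the paper's observation that two combing rays never meet again after diverging (and the synchronous fellow-traveling constant for two geodesic rays from $\id$ asymptotic to the same ideal point is more safely $4\delta$ than $2\delta$, which does not affect the conclusion).
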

\begin{proof}
That the map is Lipschitz follows immediately from the definition, and the
observation that $(E(y)|E(y'))\le (y|y')-\delta$ for $y,y'\in \overline{Y}$.
The restrictions $E:Y_n \to G_n$ are all bijections, so we just need to check that
$Y_\infty \to \partial_\infty G$ is surjective and bounded-to-one.

Since $E$ is continuous, $\overline{Y}$ is compact and $\overline{G}$ is Hausdorff, 
the image is compact. Since the image is dense (because it contains $G_n$ for all $n$), it is
surjective. 

Finally, observe that if $y$ and $y'$ are any two points in $Y_\infty$, and
$\gamma,\gamma'$ are the associated infinite geodesics in $G$, then $\gamma \cap \gamma'$
is a compact initial segment, since after they diverge they never meet again (by the
definition of a combing). Fix $x\in \partial_\infty G$, let $y_i$ be a finite
subset of $E^{-1}(x)$, and let $\gamma_i$ be the
geodesic rays in $G$ corresponding to the $y_i$. For all but finitely many
points $p$ on any $\gamma_i$, each $\gamma_j$ 
intersects the ball $B_\delta(p)$ {\em disjointly} from the others. In particular,
the number of points in the preimage of any point in $\partial_\infty G$ is bounded
by the cardinality of a ball (in $G$) of radius $\delta$.
\end{proof}

Recall that in \S~\ref{Patterson_Sullivan_subsection} we defined probability
measures $\nu_s$ on $G$ for each $s>h(G)$. Note that $e^{h(G)}=\lambda$ where
$\lambda$ is as above. For each $n$ we define a probability
measure on $Y_n$ (which, by abuse of notation, we call $\nu_s$) by
$\nu_s(y) = \nu_s(E(y)\cone_L(E(y)))$ for $y\in Y_n$, and observe that the limit
as $s \to h(G)$ from above (which we denote $\nu(y)$) exists and depends only on the cone
type $\cone_L(E(y))$. Since the Patterson--Sullivan measure
$\nu$ is supported in $\partial_\infty G$, the measures $\nu$ on each $Y_n$ are compatible,
thinking of each $Y_n$ as a collection of cylinder sets in $Y_\infty$, and define a unique probability
measure $\nu$ on $Y_\infty$ which pushes forward under $E$ to $\nu$ on $\partial_\infty G$.

\begin{lemma}\label{nu_to_mu}
The measure $\mu$ on $X_\infty$ is the limit $\mu = \lim_{n\to\infty} \frac 1 n \sum_{i=0}^{n-1} T^i_*\nu$.
\end{lemma}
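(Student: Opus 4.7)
The plan is to establish Cesàro convergence by testing on cylinder sets, since these generate the Borel $\sigma$-algebra of $X_\infty$. Thus it suffices to prove, for every cylinder $[z_0,z_1,\ldots,z_k]\subset X_\infty$, that
$$\frac{1}{n}\sum_{i=0}^{n-1} T^i_*\nu([z_0,\ldots,z_k]) \;\longrightarrow\; \mu([z_0,\ldots,z_k]).$$

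The first step is to pin down $\nu$ on cylinders of $Y_\infty$. Writing $\nu_s(y)=e^{-s|E(y)|}Z_{v_m}(s)/\zeta_G(s)$ for $y\in Y_m$ ending at state $v_m$, where the cone zeta-function $Z_v(s)=\sum_n(M^n\1)_v\,e^{-sn}$ counts path extensions from $v$, both $Z_{v_m}$ and $\zeta_G$ have a simple pole at $s=h$ with residues proportional to $\rho(\1)_{v_m}$ and $\rho(\1)_\iota$ respectively. Here Lemma~\ref{maximal_components_parallel} is needed to ensure no parallel maximal components inflate the pole order. Passing to the limit $s\to h^+$ yields
$$\nu([v_0,v_1,\ldots,v_m]) \;=\; \frac{\rho(\1)_{v_m}}{\lambda^m\,\rho(\1)_\iota}.$$
Compatibility across levels follows immediately from $M\rho(\1)=\lambda\rho(\1)$, and the total mass $1$ on each $Y_m$ follows from $\iota^T M^m\rho(\1)=\lambda^m\rho(\1)_\iota$.

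Next compute $T^i_*\nu([z_0,\ldots,z_k])$. Its preimage in $Y_\infty$ under $T^i$ is the disjoint union of cylinders $[\iota,v_1,\ldots,v_{i-1},z_0,z_1,\ldots,z_k]$, one for each of the $(M^i)_{\iota,z_0}$ valid paths $\iota\to v_1\to\cdots\to v_{i-1}\to z_0$ in $\Gamma$. Each such cylinder carries mass $\rho(\1)_{z_k}/(\lambda^{i+k}\rho(\1)_\iota)$, so
$$T^i_*\nu([z_0,\ldots,z_k]) \;=\; \frac{\rho(\1)_{z_k}\,(M^i)_{\iota,z_0}}{\lambda^{i+k}\,\rho(\1)_\iota}.$$
Writing $(M^i)_{\iota,z_0}=\bigl((M^T)^i\iota\bigr)_{z_0}$ and invoking the defining Cesàro formula for $\ell(\iota)$ gives
$$\frac{1}{n}\sum_{i=0}^{n-1}T^i_*\nu([z_0,\ldots,z_k]) \;\longrightarrow\; \frac{\rho(\1)_{z_k}\,\ell(\iota)_{z_0}}{\lambda^k\,\rho(\1)_\iota}.$$

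To conclude, compare with $\mu$. Along a valid path the product $\prod_{j=1}^{k}N_{z_{j-1},z_j}$ telescopes to $\rho(\1)_{z_k}/(\lambda^k\rho(\1)_{z_0})$, so by the definitions of $\mu$ and $N$ in Lemma~\ref{N_preserves_mu},
$$\mu([z_0,\ldots,z_k]) \;=\; \frac{\ell(\iota)_{z_0}\rho(\1)_{z_0}}{Z}\cdot\frac{\rho(\1)_{z_k}}{\lambda^k\rho(\1)_{z_0}} \;=\; \frac{\ell(\iota)_{z_0}\,\rho(\1)_{z_k}}{Z\,\lambda^k},$$
where $Z=\sum_i\ell(\iota)_i\rho(\1)_i$ is the normalizing constant. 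Pairing the defining Cesàro formula for $\ell(\iota)$ against $\rho(\1)$ and using $M\rho(\1)=\lambda\rho(\1)$ gives $Z=\rho(\1)_\iota$, reconciling the two expressions. The main obstacle is the first step---the exact identification of $\nu$ on cylinders---since in the presence of cyclic (periodic) spectral behaviour of $M$ the Perron--Frobenius asymptotics hold only in a Cesàro-averaged sense, and one must combine this with Lemma~\ref{maximal_components_parallel} to extract the clean formula displayed above.
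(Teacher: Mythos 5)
Your proposal is correct and follows essentially the same route as the paper's (sketched) proof: identify $\nu$ on cylinders via the $s\to h^+$ residue/Ces\`aro asymptotics of the suffix-counting functions $|L^y_n|=(M^n\1)_{v_m}$ (using Lemma~\ref{maximal_components_parallel} to control the pole order), then match the Ces\`aro average of the pushforwards against the explicit formulas for $\ell$, $\rho$, $N$ and $\mu$. Your write-up is in fact more explicit than the paper's, which defers the final bookkeeping to \cite{Calegari_Fujiwara}; the identity $\sum_i\ell(\iota)_i\rho(\1)_i=\rho(\1)_\iota$ reconciling the normalizations is verified correctly.
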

\begin{proof}
We give the sketch of a proof. For any $y$, let $L^y$ be the (regular)
language of suffixes of words in $L$ with $y$ as a prefix, and let $L^y_n$
be the subset of $L^y$ of length $n$. Then
$\nu_s(y) = \zeta_G^{-1}(s)\sum_n e^{-s(|y|+n)}|L^y_n|$.

If there is no path from
the final state $y_n$ to a maximal component, the growth rate of $L^y$
is strictly less than that of $L$, and $\nu_s(y) \to 0$. Otherwise both
growth functions are eventually of the form $C\lambda^n$ plus something
exponentially small compared to $\lambda^n$. Define measures $\nu_m$
on $Y_n$ by $\nu_m(y) = \frac 1 m \sum_{i=1}^m \lambda^{-(|y|+i)}|L^y_i|$.
Then by considering the form of the growth functions of $L$ and $L^y$, we see that
there is a constant $C$ (not depending on $y$ or $n$)
so that $\lim_{m \to \infty} \nu_m(y) = C\nu(y)$. Scaling $\nu_m$ to be
a probability measure, we can set $C=1$.

The proof now follows from the definition of $\mu,N$; 
see \cite{Calegari_Fujiwara} Lem.~4.19 for details. 
\end{proof}

\subsection{Limit theorems}

Let $\xi_1,\xi_2,\cdots$ be a (stationary) irreducible Markov chain on a finite
state space, with stationary measure $\mu$,
and let $f$ be a real-valued function on the state space (since this
space is finite, there are no additional assumptions on $f$; in general we require
$f$ to be integrable, and have finite variance). Define $F_n:=\sum_{i=1}^n f(\xi_i)$,
and $A=\int fd\mu$.

\begin{theorem}[Markov's central limit theorem]\label{Markov_clt}
With notation as above, there is some 
$\sigma\ge 0$ so that for any $r\le s$,
$$\lim_{n\to\infty} \Pr\left(r\le \frac {F_n-nA}{\sigma\sqrt{n}} \le s\right) = \frac 1 {\sqrt{2\pi}}\int_r^s e^{-x^2/2} dx$$
\end{theorem}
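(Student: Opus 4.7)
The plan is to apply the Nagaev--Guivarc'h perturbation-of-eigenvalues method, which in our finite-state setting reduces to a completely elementary piece of finite-dimensional spectral perturbation. Let $P$ denote the stochastic transition matrix of the chain and $\mu$ the stationary distribution (as a row vector); for $t\in\R$ introduce the family of twisted matrices $P(t)$ with entries $P(t)_{ij}:=P_{ij}e^{itf(j)}$. A direct expansion of the joint distribution of $(\xi_1,\ldots,\xi_n)$ shows that
\[
\Ex\bigl[e^{itF_n}\bigr] = \mu\,D(t)\,P(t)^{n-1}\,\1,
\]
where $D(t)$ is the diagonal matrix with entries $e^{itf(i)}$, so the entire problem reduces to understanding the $n$-th powers of $P(t)$ uniformly as $n\to\infty$ and $t\to 0$.

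By Perron--Frobenius (Lemma~\ref{Perron_Frobenius}), irreducibility of $P$ forces $1$ to be a simple eigenvalue of $P=P(0)$, with the only other eigenvalues of modulus $1$ being simple $d$-th roots of unity (where $d$ is the period of the chain). I first treat the aperiodic case $d=1$, so that all non-unit eigenvalues of $P$ lie in a disc of radius $<1$. Since $P(t)$ is analytic (in fact polynomial) in $t$, standard finite-dimensional analytic perturbation theory --- applied, e.g., via the Dunford integral $\Pi(t)=(2\pi i)^{-1}\oint (zI-P(t))^{-1}\,dz$ over a small circle around $1$ --- yields, for $t$ near $0$, a unique simple eigenvalue $\lambda(t)$ of $P(t)$ near $1$, with $\lambda(t)$ and its rank-one spectral projector $\Pi(t)$ analytic in $t$, and the rest of the spectrum of $P(t)$ confined uniformly to a disc of some radius $\rho<1$. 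Consequently
\[
\Ex\bigl[e^{itF_n}\bigr] = \lambda(t)^n\,c(t) + O(\rho^n),
\]
for a function $c$ analytic near $0$ with $c(0)=1$. Differentiating the eigenvalue equation $P(t)v(t)=\lambda(t)v(t)$ twice at $t=0$ and pairing with $\mu$ yields $\lambda'(0)=iA$ and $\lambda''(0)=-\sigma^2-A^2$, where $\sigma^2\ge 0$ is the usual asymptotic variance $\lim_{n\to\infty} n^{-1}\mathrm{Var}(F_n)=\sum_{k\in\Z}\mathrm{Cov}_\mu(f(\xi_0),f(\xi_k))$; this series converges absolutely thanks to the spectral gap of $P$. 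Hence $\log\lambda(t)=iAt-\tfrac12\sigma^2 t^2+O(t^3)$ near $0$, and substituting $t\mapsto t/(\sigma\sqrt{n})$ and raising to the $n$-th power shows that the characteristic function of $(F_n-nA)/(\sigma\sqrt{n})$ tends to $e^{-t^2/2}$; L\'evy's continuity theorem then yields the Gaussian limit in distribution claimed in the theorem.

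Two caveats remain to be addressed. If the chain has period $d>1$, the same perturbation argument applies separately to each of the $d$ simple unit-modulus eigenvalues of $P$, producing additional analytic branches $\lambda_k(t)$ with $\lambda_k(0)$ a non-trivial $d$-th root of unity; for small $t\ne 0$ these contribute terms oscillating in $n$ whose magnitude is strictly less than $|\lambda(t)|^n$ for small $t\ne 0$, so they wash out under the Gaussian normalization. (Alternatively, one passes to the subchain $(\xi_{dk})$, which is aperiodic on each periodic class, and deduces the CLT for $F_n$ from that for $F_{dn}$.) The degenerate case $\sigma=0$ occurs precisely when $f-A$ is a coboundary $u-Pu$ of some function $u$ on the state space, in which case $F_n-nA$ is almost surely bounded and the statement of the theorem is vacuous. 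The main technical ingredient is the persistence of a simple eigenvalue under analytic perturbation; since the state space is finite this is a standard and elementary fact (provable via the implicit function theorem applied to the characteristic polynomial, or via the Dunford contour integral indicated above), and all the remaining work is the explicit computation of the first two Taylor coefficients of $\lambda(t)$ at $t=0$.
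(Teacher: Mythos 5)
The paper does not actually prove Theorem~\ref{Markov_clt}: it is quoted as a classical result (Markov's CLT for finite-state chains) and used as a black box, so your proposal is supplying a proof the paper omits. The route you take --- the Nagaev--Guivarc'h perturbation argument, twisting the transition matrix to $P(t)_{ij}=P_{ij}e^{itf(j)}$, isolating the simple Perron eigenvalue $\lambda(t)$ by finite-dimensional analytic perturbation theory, expanding $\log\lambda(t)=iAt-\tfrac12\sigma^2t^2+O(t^3)$ and finishing with L\'evy continuity --- is the standard one, and in the aperiodic case your argument is complete and correct (the identification $\lambda''(0)=-\sigma^2-A^2$ is asserted rather than computed, but that is a routine second-order perturbation calculation). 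It is also very much in the spirit of the thermodynamic-formalism discussion later in the paper, where the same derivative-of-leading-eigenvalue computation appears for Ruelle transfer operators.

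The one genuine misstep is in your treatment of the periodic case. The claim that the branches $\lambda_k(t)$ emanating from the non-trivial $d$-th roots of unity have modulus strictly smaller than $|\lambda(t)|$ for small $t\neq0$ is false: if $c(i)$ denotes the cyclic class of state $i$ and $U$ is the diagonal matrix with entries $\omega^{c(i)}$, $\omega=e^{2\pi i/d}$, then $U^{-1}P(t)U=\omega P(t)$, so the spectrum of $P(t)$ is invariant under multiplication by $\omega$ and all $d$ branches satisfy $|\lambda_k(t)|=|\lambda_0(t)|$ exactly. The oscillatory terms nevertheless wash out, but for a different reason: since the chain is stationary, the expectation is paired on the left with $\mu$, and $\mu\Pi_k(0)=\mu\Pi_0(0)\Pi_k(0)=0$ for $k\neq0$, so the coefficients $c_k(t)=\mu D(t)\Pi_k(t)\1$ vanish at $t=0$ and are $O(|t|)$ by analyticity; after the substitution $t\mapsto t/(\sigma\sqrt{n})$ these contributions are $O(n^{-1/2})e^{-t^2/2}$ and disappear in the limit. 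Alternatively, your parenthetical reduction to the $d$-step subchain (grouping increments into blocks of length $d$, which form an additive functional of an aperiodic chain) is a standard and correct fix. With either repair the proof goes through; the degenerate case $\sigma=0$ is handled as you say, matching the paper's convention that $N(0,0)$ is the Dirac mass at $0$.
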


Equivalently, there is convergence in probability
$n^{-1/2}(F_n-nA) \to N(0,\sigma)$ where $N(0,\sigma)$ denote the normal distribution
with mean $0$ and standard deviation $\sigma$ (in case $\sigma=0$, we let $N(0,\sigma)$
denote a Dirac mass centered at $0$).

\medskip

Now, each maximal component $C$ as above is a stationary irreducible Markov chain,
with stationary measure the conditional measure $\mu|C$. The measure $\mu$ on
$X_\infty$ decomposes measurably into the union of (shift-invariant) 
subspaces $X_\infty(C)$, the subspace of (right) infinite
sequences contained in the component $C$. Consequently, if $\phi$ is
a combable function on $G$, then for each maximal component $C$, there are
constants $A_C = \int_C d\phi/\mu(C) d\mu$ and $\sigma_C$, so that for $\mu$-a.e.
$x \in X_\infty(C)$, the random variable $n^{-1/2}(\sum_{i=0}^{n-1} d\phi(x_i) - nA_C)$
converges in probability to $N(0,\sigma_C)$.

By Lemma~\ref{nu_to_mu}, for $\nu$-a.e. $y\in Y_\infty$ there
is a unique $C$ so that $T^ny \in X_\infty(C)$ for sufficiently big $n$;
we say that $y$ is {\em associated} to the component $C$. Let $Y_\infty(C)$
be the set of $y$ associated to a fixed $C$. For $\nu$-a.e. $y\in Y_\infty(C)$
we have convergence in probability
$n^{1/2}(\sum_{i=0}^{n-1} d\phi(y_i) - nA_C) \to N(0,\sigma_C)$
(one way to see this is to observe that this is a shift-invariant tail property of $y$,
and use Lemma~\ref{nu_to_mu}).

For combable functions, this is the end of the story. It is certainly possible for the
constants $A_C,\sigma_C$ to vary from component to component. But for
bicombable $\phi$ we have the following key lemma:

\begin{lemma}\label{common_variance}
Let $\phi$ be bicombable. Then there are constants $A,\sigma$ so that $A_C=A$ and
$\sigma_C=\sigma$ for all maximal components $C$.
\end{lemma}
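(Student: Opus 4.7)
The plan is to transport CLT data between any two maximal components by combining the ergodicity of the $G$-action on $(\partial_\infty G, \nu)$ with bicombability. The key input is that for fixed $h \in G$, left-translation by $h$ changes $\phi$ by at most $K|h| = O(1)$, so any measurable coupling between a positive-measure piece of $Y_\infty(C)$ and $Y_\infty(C')$ realized by left multiplication by $h$ will force the statistical parameters on the two components to coincide.

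First I would check that for each maximal component $C$ the set $Y_\infty(C)$ carries positive $\nu$-measure. This follows from the construction of $\mu$ in \S\ref{Markov_chain_subsection} via left and right $\lambda$-eigenvectors: $\mu$ assigns positive mass to every vertex of every maximal component, and Lemma~\ref{nu_to_mu} transports this to $\nu$ on $Y_\infty$. Since $E$ is bounded-to-one and $E_*\nu$ is the Patterson--Sullivan measure, $E(Y_\infty(C))$ has positive $\nu$-measure in $\partial_\infty G$. By Corollary~\ref{ergodic_at_infinity}, for any other maximal $C'$ there exists $h \in G$ with $\nu(h \cdot E(Y_\infty(C)) \cap E(Y_\infty(C'))) > 0$. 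Setting $B := \{y \in Y_\infty(C) : hE(y) \in E(Y_\infty(C'))\}$ and choosing a measurable selection $\Psi : B \to Y_\infty(C')$ with $E \circ \Psi = h \cdot E$ (possible because $E$ is bounded-to-one) produces the desired coupling.

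Next I would derive the pathwise estimate $|\phi(g_n) - \phi(g_n')| = O(1)$ uniformly in $n$ and $y \in B$, where $g_n := \overline{y_0 \cdots y_{n-1}}$ and $g_n' := \overline{\Psi(y)_0 \cdots \Psi(y)_{n-1}}$. The geodesic rays $\{hg_k\}_{k \ge 0}$ (based at $h$) and $\{g_k'\}_{k \ge 0}$ (based at $\id$) are both asymptotic to the common boundary point $hE(y) = E(\Psi(y))$; by $\delta$-thinness of ideal triangles they are within Hausdorff distance $O(\delta)$, and their natural arclength parameterizations differ by a bounded additive offset $O(|h|)$, giving $d_S(hg_n, g_n') = O(|h|)$ uniformly in $n$ and $y$. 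Bicombability now yields $|\phi(hg_n) - \phi(g_n)| \le K|h|$, and combability iterated along a bounded-length word from $g_n'$ to $hg_n$ yields $|\phi(hg_n) - \phi(g_n')| = O(|h|)$; combining gives the claim.

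Finally I would combine the pathwise estimate with the CLT on each component. Dividing by $n$ and using the LLN on $Y_\infty(C)$ and $Y_\infty(C')$ gives $A_C = A_{C'} =: A$. For the CLT step, the Markov-chain CLTs on the two components assert that under $\nu$ the normalized sums $(\phi(g_n) - nA)/\sqrt{n}$ and $(\phi(g_n') - nA)/\sqrt{n}$ converge in distribution to $N(0, \sigma_C)$ and $N(0, \sigma_{C'})$ respectively; applying the second CLT to $\Psi(y)$ with $y$ sampled from $\nu|B$ (the pushforward $\Psi_*(\nu|B)$ is absolutely continuous with respect to $\nu|Y_\infty(C')$ with bounded density, by the quasiconformality of $\nu$) and using that $|\phi(g_n) - \phi(g_n')|/\sqrt{n} \to 0$ pointwise on $B$ forces the two limiting normal distributions to coincide, so $\sigma_C = \sigma_{C'}$. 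The hard part will be the uniform control $d_S(hg_n, g_n') = O(|h|)$ in step three --- requiring a quantitative statement about asymptotic geodesic rays synchronously fellow-traveling --- together with establishing the measure-theoretic compatibility needed to transfer the CLT across the coupling $\Psi$.
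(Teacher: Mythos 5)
Your overall strategy is the one the paper uses: show $E(Y_\infty(C))$ has positive Patterson--Sullivan measure, invoke the ergodicity of the $G$-action on $(\partial_\infty G,\nu)$ from Corollary~\ref{ergodic_at_infinity} to connect two maximal components by a single group element, use $\delta$-thinness of ideal triangles to see that the two combing rays asymptotic to $y$ and to $hy$ fellow-travel up to a bounded offset, and then use bicombability (left-invariance for the translation by $h$, right-invariance for the bounded discrepancy between the rays) to conclude that $\phi$ differs by $O(1)$ along the two rays, which is invisible after dividing by $\sqrt{n}$. The fellow-traveling estimate you single out as "the hard part" is exactly the paper's one-line appeal to $\delta$-thinness, and it does work as you describe.

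There is, however, a genuine gap in your final step, precisely at the point you wave at with "measure-theoretic compatibility." You transfer the CLT by restricting a convergence-in-distribution statement from $\nu|Y_\infty(C')$ to the measure $\Psi_*(\nu|B)$, justified only by absolute continuity with bounded density. Convergence in distribution is \emph{not} preserved under absolutely continuous changes of measure or under restriction to positive-measure subsets: if $X_n\to N(0,\sigma)$ in law under $P$, the law of $X_n$ under $P|_B$ can converge to something else entirely (take $X_n$ independent of $n$ with standard normal law and $B$ a half-space of values). The same objection applies to your restriction of the component-$C$ CLT to $B\subset Y_\infty(C)$. To repair this you would need the Markov-chain CLT in its \emph{mixing} (Rényi/stable) form, which does hold for irreducible aperiodic chains but must be invoked explicitly. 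The paper sidesteps the issue by formulating "typicality" as a pointwise almost-sure property of an individual $y\in Y_\infty$ (with well-defined constants $A_y,\sigma_y$), so that it restricts for free to any positive-measure set and to any single point; ergodicity then only needs to produce \emph{one} typical pair $y\in Y_\infty(C)$, $y'$ over $gE(y)$ in $Y_\infty(C')$, and the bounded-difference estimate forces $A_y=A_{y'}$ and $\sigma_y=\sigma_{y'}$. If you rephrase your last paragraph in that pointwise form (or explicitly invoke stable convergence), the proof is complete.
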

\begin{proof}
Call $y\in Y_\infty$ {\em typical} if there are constants $A_y$ and $\sigma_y$
(necessarily unique) so that 
$n^{1/2}(\sum_{i=0}^{n-1} d\phi(y_i) - nA_y) \to N(0,\sigma_y)$. For each $C$ we
have seen that $\nu$-a.e. $y\in Y_\infty(C)$ is typical with $A_y=A_C$ and $\sigma_y=\sigma_C$.

The map $E:Y_\infty \to \partial_\infty G$ is finite-to-one, and takes the measure
$\nu$ on $Y_\infty$ to the Patterson--Sullivan measure $\nu$ on $\partial_\infty G$.
Hence $E(Y_\infty(C))$ has positive measure for each $C$. Let $y\in \partial_\infty G$ 
be typical, and let $\id,g_1,g_2,\cdots$ be the associated geodesic
sequence of elements in $G$ converging to $E(y)$. Now let $g$ be
arbitrary, let $y'$ be any element of $Y_\infty$ with $E(y')=gE(y)$, and let
$\id,g_1',g_2',\cdots$ be the geodesic sequence of elements in $G$ associated to $y'$.
By $\delta$-thinness, $d(g_i',gg_i)$ is eventually approximately constant, and therefore
bounded. Since $\phi$ is bicombable, $y'$ is typical, with $A_{y'}=A_y$ and
$\sigma_{y'}=\sigma_y$. But the action of $G$ on $\nu$ is ergodic for 
$\nu$, by Corollary~\ref{ergodic_at_infinity}, and therefore for any $C,C'$ there
are typical $y\in Y_\infty(C)$, $y'\in Y_\infty(C')$ with $A_y=A_C,\sigma_y=\sigma_C$
and $A_{y'}=A_{C'},\sigma_{y'}=\sigma_{C'}$, and with $y'=gy$ for some $g$. This
completes the proof.
\end{proof}

\begin{corollary}[Calegari--Fujiwara \cite{Calegari_Fujiwara}]
Let $G$ be hyperbolic, and let $\phi$ be bicombable. Then there are constants $A,\sigma$
so that if $g_n$ denotes a random
element of $G_n$ (in the $\nu$ measure), there is convergence in probability
$n^{-1/2}(\phi(g_n)-nA) \to N(0,\sigma)$.
\end{corollary}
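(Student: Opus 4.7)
The plan is to transport Markov's central limit theorem (Theorem~\ref{Markov_clt}) from the symbolic shift space $X_\infty$ to the group via the bijection $E\colon Y_n \to G_n$ and the scale-compatibility of the measures $\nu$. The essential work has already been done in Lemma~\ref{common_variance}, which supplies constants $A,\sigma$ independent of the maximal component; what remains is to verify that $\nu$-random elements $g_n\in G_n$ inherit the CLT behavior of $\nu$-random infinite paths in $Y_\infty$.

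First I would identify $\phi(g_n)$ with $\sum_{i=0}^{n-1} d\phi(y_i)$, where $y\in Y_\infty$ is $\nu$-random and $y_i$ is its $i$-th state; this identification is legitimate because the measures $\nu$ on $Y_n$ are, by construction, cylinder-compatible with the measure $\nu$ on $Y_\infty$, and because $\phi$ is weakly combable with respect to $L$. By Lemma~\ref{maximal_components_parallel}, $\nu$-a.e.\ $y$ enters and remains in a unique maximal component $C=C(y)$ after some (random but almost surely finite) time $\tau(y)$, giving a $\nu$-measurable decomposition $Y_\infty=\bigsqcup_C Y_\infty(C)$ into finitely many positive-measure pieces.

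Next, on each component $C$ I apply Theorem~\ref{Markov_clt} to the irreducible aperiodic Markov chain on $C$ with stationary measure (the normalization of) $\mu|_C$ and observable $d\phi$, producing the convergence in distribution $n^{-1/2}\bigl(\sum_{i<n} d\phi(x_i)-nA_C\bigr)\to N(0,\sigma_C)$ for $\mu$-a.e.\ $x\in X_\infty(C)$. Because this convergence is a shift-tail property, Lemma~\ref{nu_to_mu}---which expresses $\mu$ as a Ces\`aro limit of forward shifts of $\nu$---upgrades the almost-sure statement from $\mu$ to $\nu$ on each $Y_\infty(C)$. Lemma~\ref{common_variance} then collapses the constants to $A_C\equiv A$, $\sigma_C\equiv \sigma$, and patching the component-wise CLTs over the decomposition yields the stated CLT for a $\nu$-random $g_n\in G_n$.

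The main obstacle I foresee is controlling the transient prefix of $y$ before it settles into its maximal component. Lemma~\ref{maximal_components_parallel} implies that the fraction of length-$n$ paths remaining outside every maximal component for time $>t$ decays exponentially in $t$, so $\tau(y)=O(\log n)$ with overwhelming $\nu$-probability; since $|d\phi|$ is bounded by combability of $\phi$, this prefix contributes only $O(\log n)$ to $\phi(g_n)$ and is absorbed into the $\sqrt{n}$ scaling by a Slutsky-type argument. A secondary subtlety is that the distribution of the post-$\tau$ portion of $y$ within $X_\infty(C)$ is not exactly the stationary measure $\mu|_C$, but Ces\`aro-equivalence (Lemma~\ref{nu_to_mu}) is precisely what is needed because the CLT conclusion is tail-invariant.
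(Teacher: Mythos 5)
Your proposal is correct and follows essentially the same route as the paper: the paper's argument is exactly the decomposition of $\mu$ over the maximal components, Markov's central limit theorem (Theorem~\ref{Markov_clt}) applied on each component, transfer from $\mu$ to $\nu$ via tail-invariance and Lemma~\ref{nu_to_mu}, and then Lemma~\ref{common_variance} to identify the constants $A_C,\sigma_C$ across components. Your extra remark controlling the $O(\log n)$ transient prefix (via Lemma~\ref{maximal_components_parallel} and boundedness of $d\phi$, which holds automatically since $d\phi$ is defined on the finitely many vertices of $\Gamma$) just makes explicit a point the paper leaves implicit.
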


Note that $A$ and $\sigma$ as above are {\em algebraic}, and one can estimate from
above the degree of the field extension in which they lie from the complexity
of $\Gamma$.

The uniform measure and the measure $\nu$ on $G_n$ are uniformly quasi-equivalent
on a large scale, in the sense that there are constants $R$ and $C$ so that
for any $g\in G_n$, there is an inequality
$$C^{-1} |B_R(g)\cap G_n|/|G_n| \le \nu(B_R(g)\cap G_n) \le C|B_R(g)\cap G_n|/|G_n|$$
It follows that if $g_n$ denotes a random element of $G_n$
(in the uniform measure), the distribution $n^{-1/2}(\phi(g_n)-nA)$ has a tail that
decays like $C_1e^{-C_2t^2}$.

Since length with respect to one generating set is bicombable with respect to another,
we obtain the following corollary:
\begin{corollary}
Let $G$ be hyperbolic, and let $S$ and $S'$ be two finite generating sets for $G$.
There is an algebraic number $\lambda_{S,S'}$ so that if $g_n$ is a random element of $S$
of word length $n$, then the distribution $n^{-1/2}(|g_n|_{S'}-n\lambda_{S,S'})$ has a tail
that decays like $C_1e^{-C_2t^2}$ when $n$ is sufficiently large.
\end{corollary}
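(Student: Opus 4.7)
The plan is to deduce this statement as a direct application of the preceding Corollary to the function $\phi(g) := |g|_{S'}$, viewed as a function on $G$ with its $S$-word metric. First I would observe that by Example~\ref{word_length_example}, the function $|\cdot|_{S'}$ is bicombable with respect to the generating set $S'$. Since bicombability is independent of the choice of generating set and combing (the Calegari--Fujiwara lemma quoted just after the definition of combable functions), it follows that $\phi$ is bicombable with respect to some combing of $G$ by $S$. The preceding Corollary then produces constants $A,\sigma$ so that, for $g_n$ a $\nu$-random element of $G_n$, we have convergence in probability $n^{-1/2}(\phi(g_n) - nA) \to N(0,\sigma)$. I would set $\lambda_{S,S'} := A$; its algebraicity, and that of $\sigma$, follows from the Perron--Frobenius eigenvalue structure of the component adjacency matrices of the parameterizing graph $\Gamma$, as noted just before the Corollary.

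Next I would pass from the Patterson--Sullivan measure $\nu$ on $G_n$ to the uniform measure. This is exactly the content of the displayed uniform quasi-equivalence
$$C^{-1}|B_R(g)\cap G_n|/|G_n| \le \nu(B_R(g)\cap G_n) \le C|B_R(g)\cap G_n|/|G_n|$$
stated in the previous paragraph. Because $\phi$ is $S$-Lipschitz (with Lipschitz constant $\max_{s\in S}|s|_{S'}$), its values on any $B_R(g)$ differ by at most a uniform constant, so the Gaussian tail decay of order $C_1 e^{-C_2 t^2}$ established there for bicombable functions transfers from the $\nu$-distribution to the uniform distribution on $G_n$.

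The only step requiring real work is the transport of bicombability across a change of generating set, and I would expect this to be the main (though already-cited) obstacle. The mechanism, as sketched in the paper, is that $S$- and $S'$-geodesics joining the same pair of points synchronously fellow-travel at a uniform scale: the identity map $(G,d_S) \to (G,d_{S'})$ is bilipschitz, both spaces are hyperbolic, and so the Morse Lemma (Lemma~\ref{morse_lemma}) gives uniform Hausdorff closeness, while finiteness of cone types in both combings gives synchronous fellow-traveling. An automaton reading an $S$-combing word can therefore simulate, in parallel, finitely many $S'$-combing automata tracking nearby $S'$-geodesics and accumulate the increments $d\phi$ accordingly. Given the preceding Corollary and the quasi-equivalence of measures, no further estimates are needed.
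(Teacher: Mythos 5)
Your proposal is correct and follows the paper's own route: apply the Calegari--Fujiwara central limit theorem for bicombable functions to $|\cdot|_{S'}$ (using Example~\ref{word_length_example} and the independence of bicombability from the generating set), and then transfer the Gaussian tail bound from the $\nu$-measure to the uniform measure via the displayed quasi-equivalence on $G_n$, exactly as in the paragraph preceding the statement.
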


It is a slightly subtle point that $\lambda_{S',S} \ge \lambda_{S,S'}^{-1}$, and
the inequality is strict except for essentially trivial cases.

\subsection{Thermodynamic formalism}

To push these techniques further, we must study classes of functions more general
than combable functions, and invoke more sophisticated limit theorems.
There is a well-known framework to carry out such analysis, pioneered by Ruelle, Sinai,
Bowen, Ratner, Parry etc.; \cite{Ruelle} is a standard reference.

The setup is as follows. For simplicity, let $M$ be a $k\times k$ matrix
with $0$--$1$ entries for which there is a constant $n$ so that all the entries of
$M^n$ are positive (i.e.\/ $M$ is the adjacency matrix of a topological Markov chain
with $k$ states which is irreducible and aperiodic). Let $X_\infty$ be the
space of (right) infinite sequences $x:=x_0,x_1,\cdots$ satisfying $M(x_n,x_{n+1})=1$
for all $n$, and let $T$ be the shift operator on $X_\infty$. As before, we
can metrize $X_\infty$ by $d(x,x')=a^{-(x|x')}$ for some fixed $a>1$, and observe
that the action of $T$ on $X_\infty$ is {\em mixing}. This means that for all nonempty
open sets $U,V\subset X_\infty$ there is $N$ so that $T^{-n}(U)\cap V$ is nonempty
for all $n\ge N$. Note that if $M$ is irreducible but not aperiodic, there is 
nevertheless a decomposition of $X_\infty$ into $D$ 
disjoint components which are cycled by $T$, and 
such that $T^D$ is mixing on each component, where $D$ is the gcd of the periods 
of $T$-invariant sequences.

Let $\M_T$ be the space of $T$-invariant probability measures on $X_\infty$. This
is a convex, compact subset of the space of all measures in the weak-$^*$ topology.
It is not hard to show that the topological entropy $h$ of $T$ is equal to 
the supremum of the measure theoretic entropies $\sup_{\mu\in \M_T} h(\mu)$, and that
$h=\log{\lambda}$ where $\lambda$ is the Perron--Frobenius eigenvalue of $M$;
see e.g.\/ \cite{Ruelle}. 

The shift $T$ uniformly expands $X_\infty$ by a factor of $a$, 
and therefore if a function on
$X_\infty$ is sufficiently regular, it tends to be smoothed out by $T$. Define
$T^*f$ by $T^*f(x) = f\circ Tx$. We would like the iterates $(T^n)^*f$ to have a
uniform modulus of continuity; this is achieved precisely by insisting that $f$
be H\"older continuous, that is, that there is some $\alpha$ so that 
$|f(x)-f(x')|\le C d(x,x')^\alpha = C a^{-\alpha(x|x')}$. The set of 
functions $f$ on $X_\infty$, H\"older continuous of exponent $\alpha$, is a Banach
space with respect to the norm $\|f\|_\infty + \|f\|_\alpha$ where $\|f\|_\alpha$
is the least such $C$ so that $|f(x)-f(x')|\le C d(x,x')^\alpha$. We denote this
Banach space $C^\alpha(X_\infty)$.

\begin{definition}
Let $f$ be H\"older continuous on $X_\infty$. The {\em pressure} of $f$, denoted
$P(f)$, is $P(f) = \sup_{\mu \in \M_T} (h(\mu) + \int f d\mu)$.
\end{definition}

It turns out that the supremum is realized on some invariant measure $\mu_f$ 
of full support, known as the {\em equilibrium state} (or {\em Gibbs state}) 
of $f$. That is, $P(f) = h(\mu_f) + \int f d\mu_f$. See e.g.\/ \cite{Bowen} 
Ch.~1 for a proof of this theorem, and of Theorem~\ref{RPF_theorem} below.

\begin{definition}
The {\em Ruelle transfer operator} $L_f$ associated to $f$ is defined by the
formula $L_f g(x) = \sum_{Tx'=x} e^{f(x')}g(x')$. Note that $L_f$ acts as a bounded
linear operator on $C^\alpha(X_\infty)$.
\end{definition}

\begin{theorem}[Ruelle--Perron--Frobenius \cite{Ruelle}]\label{RPF_theorem}
The operator $L_f$ has a simple positive eigenvalue $e^{P(f)}$ which is strictly
maximal in modulus. The essential spectrum is contained in a ball whose radius
is strictly less than $e^{P(f)}$, and the rest of the 
spectrum outside this ball is discrete and consists of genuine eigenvalues.

There is a strictly positive eigenfunction $\psi_f$ satisfying $L_f\psi_f = e^{P(f)}\psi_f$,
and an ``eigen probability measure'' $\nu_f$ satisfying $L_f^*\nu_f = e^{P(f)}\nu_f$,
and if we scale $\psi_f$ so that $\int \psi_f d\nu_f=1$, then
the equilibrium state $\mu_f$ is equal to $\nu_f \psi_f$.
\end{theorem}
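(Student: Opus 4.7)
The plan is to construct the principal eigenvalue and its eigenobjects first, then establish the spectral gap. Call $\lambda>0$ the principal eigenvalue, which will be identified with $e^{P(f)}$ at the end.

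First I would produce the eigenmeasure $\nu_f$ by a Schauder--Tychonoff fixed point argument applied to the continuous self-map $\nu \mapsto L_f^*\nu / (L_f^*\nu)(1)$ on the weak-$^*$ compact convex set of Borel probability measures on $X_\infty$. A fixed point gives $L_f^*\nu_f = \lambda\nu_f$ with $\lambda=(L_f^*\nu_f)(1)>0$. To produce the eigenfunction I normalize $\widetilde L := \lambda^{-1}L_f$, note that $\int \widetilde L^n 1\,d\nu_f=1$ for all $n$, and invoke the distortion estimate for Birkhoff sums $S_nf := \sum_{k=0}^{n-1} f\circ T^k$ of a H\"older function on an expanding system: if $x,x'$ lie in a common length-$n$ cylinder then $|S_nf(x)-S_nf(x')| \le \|f\|_\alpha \sum_{k\ge 0} a^{-k\alpha}$, uniformly in $n$. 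This implies the functions $\widetilde L^n 1$ are uniformly bounded and equi-H\"older, so by Arzel\`a--Ascoli any Cesaro accumulation point yields a positive $\psi_f$ with $L_f\psi_f=\lambda\psi_f$; strict positivity comes from mixing of $T$.

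Next I set $\mu_f=\psi_f\nu_f$ and verify $T$-invariance using the duality identity $\int (g\circ T)h\,d(L_f^*\nu)=\int g\cdot L_f h\,d\nu$ specialized to $h=\psi_f$, $\nu=\nu_f$. The distortion estimate applied to cylinders gives the Gibbs property $\mu_f([x_0\cdots x_{n-1}]) \asymp e^{-n\log\lambda + S_n f(x)}$, and the Shannon--McMillan--Breiman theorem then yields $h(\mu_f)=\log\lambda-\int f\,d\mu_f$, so $h(\mu_f)+\int f\,d\mu_f=\log\lambda$. A standard partition argument using the same Gibbs inequality shows $h(\mu)+\int f\,d\mu\le \log\lambda$ for every $T$-invariant $\mu$, identifying $\lambda=e^{P(f)}$ and $\mu_f$ as the equilibrium state.

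The hard part is the spectral gap and the essential spectrum bound. My plan is to combine two inputs. First, Birkhoff's projective contraction theorem applied to the Hilbert metric on a suitable invariant cone of positive H\"older functions of bounded log-oscillation shows that, under mixing of $T$, some iterate of $\widetilde L$ has finite projective diameter on its image and is therefore a strict contraction; this yields exponential convergence $\|\widetilde L^n g - \psi_f \int g\,d\nu_f\|_\infty \le C\rho^n\|g\|_\alpha$ for some $\rho<1$ and every $g\in C^\alpha$. Second, a Lasota--Yorke/Doeblin--Fortet inequality
$$\|L_f^n g\|_\alpha \le C_1 a^{-\alpha n}\lambda^n\|g\|_\alpha + C_2\lambda^n\|g\|_\infty$$
follows directly from the distortion estimate and the fact that $T$ contracts cylinder diameters by a factor of $a^{-1}$ under preimage. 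These are exactly the hypotheses of the Ionescu-Tulcea--Marinescu (Hennion) theorem, which gives quasi-compactness of $\widetilde L$ on $C^\alpha(X_\infty)$: the essential spectral radius is at most $a^{-\alpha}<1$, the remaining spectrum is a finite set of eigenvalues of finite multiplicity, and $1$ is the unique peripheral eigenvalue, simple (uniqueness in modulus again coming from projective contraction plus mixing). Translating back through the normalization gives the theorem. I expect the main obstacle to be engineering an invariant cone on which Birkhoff's theorem produces a genuine contraction rate; the Lasota--Yorke estimate itself is routine once the distortion estimate is in hand, but it is precisely the H\"older hypothesis on $f$, feeding into that distortion estimate, that makes both ingredients work and is responsible for the gap between the essential and peripheral spectra.
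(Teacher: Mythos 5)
The paper does not actually prove this theorem: it is quoted from the literature, and the reader is referred to Bowen's lecture notes (and Ruelle's book) for the proof. So there is no in-paper argument to compare against, and your proposal must be judged on its own terms. What you outline is the standard proof and it is sound: Schauder--Tychonoff on the weak-$^*$ compact convex set of probability measures to produce $\nu_f$; the bounded-distortion estimate for Birkhoff sums of a H\"older function feeding into Arzel\`a--Ascoli to produce $\psi_f$; the Gibbs property together with Shannon--McMillan--Breiman to identify $\log\lambda$ with $P(f)$ and $\psi_f\nu_f$ with the equilibrium state; and a Lasota--Yorke/Doeblin--Fortet inequality plus Ionescu-Tulcea--Marinescu (or, equivalently, Birkhoff cone contraction) for quasi-compactness on $C^\alpha(X_\infty)$ and the spectral gap. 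This is essentially Bowen Ch.~1 for the first half and Ruelle/Hennion for the second, so the architecture is exactly right.

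Two places where your sketch conceals real work. First, the equi-H\"older bound on $\widetilde L^n 1$: the preimage sets $T^{-n}(x)$ and $T^{-n}(x')$ are in natural bijection only when $x$ and $x'$ lie in a common $1$-cylinder, so the oscillation estimate is obtained cylinder by cylinder, and the uniform two-sided bounds $c\le \widetilde L^n 1\le C$ (hence strict positivity of $\psi_f$ and simplicity of the peripheral eigenvalue) require the aperiodicity hypothesis the paper imposes in this subsection --- for merely irreducible $M$ the peripheral spectrum genuinely contains $e^{2\pi i k/D}e^{P(f)}$ and the statement as given fails. Second, the cone step: the cone that works is the set of positive $g$ with $g(x)\le e^{b\,d(x,x')^\alpha}g(x')$ for $x,x'$ in a common $1$-cylinder, for $b$ large compared with $\|f\|_\alpha/(a^\alpha-1)$, and one must verify both invariance under $\widetilde L$ and \emph{finite projective diameter of the image of some iterate}, the latter again using aperiodicity; this is the "engineering" you correctly flag as the main obstacle. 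Neither point is an error in your plan --- they are just where "routine" hides the actual estimates.
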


\begin{remark}
$\nu_f$ can be thought of as a left eigenvector for $L_f$, and $\psi_f$ as a right
eigenvector. When $f$ is identically zero, $L_f$ is basically just the matrix
$M$, and $\mu_f$ is basically just $\mu$ as constructed in \S~\ref{Markov_chain_subsection}.
\end{remark}

Pollicott \cite{Pollicott_CRPF} proved a complexified version of 
the RPF theorem, and showed that
$P(f)$ and $\psi_f$ are {\em analytic} on an open subset of
the {\em complex} Banach space $C^\alpha(X_\infty,\C)$ which contains a neighborhood
of $C^\alpha(X_\infty,\R)$ (i.e.\/ of $C^\alpha(X_\infty)$).

Because of the simplicity and analyticity of the maximal eigenvector/value, one can
study the {\em derivatives} of pressure. For simplicity, let $P(t):=P(tf+g)$. Then
we can compute
$$P'(0) = \int f d\mu_g$$
and a further differentiation gives
$$P''(0) = \int f^2 + 2fw'(0)d\mu_g$$
where $w(t) = \psi_{tf+g}$ (suitably normalized).

Now, let $F_n(x) = \sum_{i=0}^{n-1} f(T^ix)$. Then from the definition of the
transfer operator, $L_{tf+g}^n(\cdot) = L_g^n(e^{tF_n} \cdot)$, and therefore one
obtains
$$nP''(0) = \int F_n^2 + 2F_nw'(0)d\mu_g$$
If we set $g$ to be identically zero, then $\mu_g$ is just the equilibrium measure
$\mu$ from before. If we change $f$ by a constant $f - \int fd\mu$ to have mean $0$,
then the ergodic theorem shows $(1/n)F_n \to 0$ $\mu$-a.e. and therefore
$$P''(0) = \lim_{n \to \infty} \int \frac 1 n F_n^2 d\mu$$
It is usual to denote this limiting quantity by $\sigma^2$.

The analyticity of $P$ lets us control the higher moments of $F_n$ in a uniform
manner, and therefore by applying Fourier transform, one obtains a central limit theorem
$n^{-1/2}F_n \to N(0,\sigma)$. Better estimates of the rate of convergence can be
obtained by studying $P'''(0)$; see \cite{Coelho_Parry}.

\medskip

This theorem can be combined with Lemma~\ref{common_variance} to obtain
a central limit theorem for certain functions on hyperbolic groups whose (discrete)
derivatives along a combing satisfy a suitable H\"older continuity property. Such
functions arise naturally for groups acting cocompactly on $\CAT(K)$ spaces
with $K<0$, where one wants to compare the intrinsic geometry of the space with the
``coarse'' geometry of the group.

Let $Z$ be a complete $\CAT(K)$ geodesic metric space with $K<0$, and let $G$ act
cocompactly on $Z$ by isometries. Pick a basepoint $z\in Z$, and define 
a function $F$ on $G$ by $F(g)=d(z,gz)$. Since $G$ is hyperbolic, if we fix a finite
generating set $S$ we can choose a geodesic combing $L$ with respect to $S$ as
above. Now, for any $s\in S$ define $D_sF(g) = F(g) - F(sg)$. 
It is straightforward to see from the $\CAT(K)$ property
that there are constants $C$ and $\alpha$ (depending on $K$ and $G$) so that 
$|D_sF(g) - D_sF(h)|\le C a^{-\alpha(g|h)}$ for all $s$ and all $g,h\in G$. 

An element of $\cup X_n$ corresponds to a path in $\Gamma$. Reading the edge labels determines
a word in the generators (a suffix of some word in $L$), and by evaluation, an element of $G$.
Let $E:\cup X_n \to G$ denote this evaluation map (note that this is not injective).
We can define a function $DF$ on $\cup X_n$ by $DF(x) = D_s F(E(x))$ where $s^{-1}$ is the
label associated to the transition from $x_0$ to $x_1$ 
(we could suggestively write $s=x_1^{-1}x_0$).
Evidently, $DF$ extends to a H\"older continuous function on $\overline{X}$.
Furthermore, for each $y\in Y_n$, we have $\sum_{i=0}^{n-1} DF(T^iy) = F(E(y))$.

For each big component $C$, it follows that $\nu$-a.e. $y\in Y_\infty(C)$ are
$A_C,\sigma_C$ typical (for the function $DF$) for some $A_C,\sigma_C$ depending only on $C$.
Since $F$ is Lipschitz on $G$ in the left and right invariant metrics, the argument of
Lemma~\ref{common_variance} implies that $A_C,\sigma_C$ are equal to some common 
values $A,\sigma$, and therefore we obtain the following corollary:

\begin{corollary}\label{distance_CLT}
Let $Z$ be a complete $\CAT(K)$ geodesic metric space with $K<0$, and let $G$ act
cocompactly on $Z$ by isometries. Pick a basepoint $z\in Z$, and a finite generating set
$S$ for $G$. Then there are constants $A$ and $\sigma$ so that if $g_n$ is a random
element of $G_n$ (in the $\nu$ measure), there is convergence in probability
$n^{-1/2}(d(z,g_nz) - An) \to N(0,\sigma)$.
\end{corollary}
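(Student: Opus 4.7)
The plan is to follow the template laid out in the preceding discussion: realize the word-length function $F(g):=d(z,gz)$ as a Birkhoff sum of a H\"older function $DF$ on the shift space of a combing, apply the Ruelle--Perron--Frobenius/Pollicott machinery to each maximal component to obtain a CLT with component-dependent constants, and then invoke a ``common variance'' argument in the spirit of Lemma~\ref{common_variance} to collapse these constants to a single pair $(A,\sigma)$.

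First I would set up the coding. Fix a generating set $S$ for $G$ and a combing $L$ with parameterizing graph $\Gamma$ (Theorem~\ref{Cannon_theorem}). Define $D_sF(g)=F(g)-F(sg)$ for $s\in S$ and $g\in G$. The key geometric input is that in a $\CAT(K)$ space with $K<0$, ``nearest point projection contracts exponentially,'' so that if $g$ and $h$ share a long common prefix in the combing (i.e.\/ $(g|h)$ is large in the Gromov product on $C_S(G)$), then the $G$-orbit points $gz,hz$ are exponentially close after a comparable amount of backtracking. This yields constants $C,\alpha>0$ with
\[
|D_sF(g)-D_sF(h)|\le C a^{-\alpha(g|h)}.
\]
Next, for each $x\in \bigcup_n X_n$ I define $DF(x)=D_sF(E(x))$ where $s=x_1^{-1}x_0$ is the label of the initial edge of $x$; because $(E(x)|E(x'))\ge (x|x')$ on any single component of $\Gamma$, the bound above promotes $DF$ to an element of the H\"older space $C^\alpha(X_\infty)$, and it telescopes along paths: $\sum_{i=0}^{n-1}DF(T^iy)=F(E(y))$ for $y\in Y_n$.

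Now I would apply the thermodynamic formalism on each maximal component. Each maximal component $C$ of $\Gamma$ is, by Lemma~\ref{maximal_components_parallel} and the definition of maximal, an irreducible topological Markov chain; passing to the period-$D$ refinement, $T^D$ is mixing on each cycled subcomponent, so Theorem~\ref{RPF_theorem} and Pollicott's complexified version apply to the restriction of $DF$ (shifted by its mean) to $X_\infty(C)$. Standard differentiation of pressure at $t=0$ produces constants $A_C:=\int DF\,d(\mu|C)/\mu(C)$ and $\sigma_C\ge 0$ with a CLT $n^{-1/2}(F_n-nA_C)\to N(0,\sigma_C)$ for $\mu|C$-a.e.\ $x$. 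By Lemma~\ref{nu_to_mu} and the tail-invariance of the CLT property, for $\nu$-a.e.\ $y\in Y_\infty(C)$ the word $\id,g_1,g_2,\dots$ produced by $E(y)$ satisfies $n^{-1/2}(F(g_n)-nA_C)\to N(0,\sigma_C)$.

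Finally, I would collapse $A_C$ and $\sigma_C$ to common values $A,\sigma$ exactly as in Lemma~\ref{common_variance}. The function $F$ on $G$ is Lipschitz for both the left and right word metrics (since $|F(g)-F(hg)|\le d(z,hz)$ and $|F(g)-F(gh)|\le d(z,h^{-1}z)$), so if $y,y'\in Y_\infty$ satisfy $E(y')=gE(y)$ then $\delta$-thinness makes the associated geodesic sequences track each other to within bounded error, hence $A_{y'}=A_y$ and $\sigma_{y'}=\sigma_y$ whenever one of them is typical. Since $E$ is finite-to-one and pushes $\nu$ on $Y_\infty$ to Patterson--Sullivan measure on $\partial_\infty G$, the set $E(Y_\infty(C))$ has positive $\nu$-measure for every maximal $C$, and ergodicity of the $G$-action (Corollary~\ref{ergodic_at_infinity}) produces, for any pair of components $C,C'$, typical representatives in $Y_\infty(C)$ and $Y_\infty(C')$ related by an element of $G$. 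This forces $A_C=A$, $\sigma_C=\sigma$ independently of $C$, and the corollary follows by integrating the componentwise CLT against $\nu$.

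The main obstacle is the H\"older estimate for $DF$: one must verify that the exponential contraction furnished by $K<0$ really does translate, via the combing, into H\"older regularity in the $a$-metric on $X_\infty$ (in particular that the H\"older exponent $\alpha$ coming from $\CAT(K)$ comparison is compatible with the choice of $a$ used to metrize $\overline{X}$). Once this Lipschitz-type translation between the intrinsic geometry of $Z$ and the symbolic geometry of $X_\infty$ is in hand, the remaining steps are a direct appeal to Theorem~\ref{RPF_theorem} and a faithful adaptation of Lemma~\ref{common_variance}.
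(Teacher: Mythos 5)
Your proposal follows essentially the same route as the paper: define $F(g)=d(z,gz)$, verify the H\"older estimate $|D_sF(g)-D_sF(h)|\le Ca^{-\alpha(g|h)}$ from the $\CAT(K)$ hypothesis, transport $DF$ to a H\"older function on the shift space so that it telescopes to $F$ along combing paths, run the Ruelle--Perron--Frobenius/Pollicott central limit theorem on each maximal component, and then use the left- and right-Lipschitz property of $F$ together with the argument of Lemma~\ref{common_variance} and ergodicity of the boundary action to identify the componentwise constants $A_C,\sigma_C$ with a single $A,\sigma$. This is correct, and you even sketch the exponential-contraction justification of the H\"older estimate that the paper dismisses as ``straightforward.''
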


Evidently, the only properties of the function $F$ we use are that it is Lipschitz
in both the left- and right-invariant metrics, and
satisfies a H\"older estimate $|D_sF(g) - D_sF(h)|\le C a^{-\alpha(g|h)}$ for all $s$
and all $g,h\in G$. Any such function on a hyperbolic group satisfies a central limit
theorem analogous to Corollary~\ref{distance_CLT}. For the sake of completeness,
therefore, we state this as a theorem:

\begin{theorem}[H\"older central limit theorem]\label{holder_CLT}
Let $G$ be a hyperbolic group, and $S$ a finite generating set for $G$.
Let $F$ be a real-valued function which is Lipschitz in both the left- and right-invariant
word metrics on $G$, and satisfies $|D_sF(g) - D_sF(h)|\le C a^{-\alpha(g|h)}$ for all $s$
in $S$ and all $g,h\in G$. Then there are constants $A$ and $\sigma$ so that if
$g_n$ is a random element of $G_n$ (in the $\nu$ measure), there is convergence in
probability
$n^{-1/2}(F(g_n) - An) \to N(0,\sigma)$.
\end{theorem}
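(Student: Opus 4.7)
The plan is to follow the scheme of Corollary~\ref{distance_CLT}, substituting the abstract hypotheses on $F$ for the $\CAT(K)$ geometry that was used there. First I would fix a combing $L$ for $G$ with respect to $S$ (Theorem~\ref{Cannon_theorem}), choose a parameterizing graph $\Gamma$, and form the associated shift spaces $\overline{X}\supset\overline{Y}$ together with the evaluation map $E:\overline Y\to\overline G$. Define a function $DF$ on $\cup X_n$ by $DF(x)=D_sF(E(x))$, where $s$ is the generator labeling the transition $x_0\to x_1$. The H\"older hypothesis on the discrete derivatives $D_sF$, combined with the fact that $(E(y)|E(y'))\ge (y|y')-O(\delta)$, shows that $DF$ extends to a H\"older continuous function on $\overline X$ (possibly after adjusting $\alpha$ or $a$). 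The telescoping identity $F(E(y))=\sum_{i=0}^{n-1}DF(T^iy)$ for $y\in Y_n$ then reduces the desired CLT for $F$ to a CLT for Birkhoff sums of a H\"older observable under the shift.

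Next I would invoke the Ruelle--Perron--Frobenius theorem (Theorem~\ref{RPF_theorem}) together with Pollicott's complexification and the analyticity of the pressure on each maximal component $C$ of $\Gamma$, passing to a power of $T$ if necessary to handle periodicity. The standard Fourier-analytic argument applied to the transfer operator produces, for each maximal $C$, constants $A_C,\sigma_C$ such that the normalized Birkhoff sums of $DF$ obey a CLT for $\mu|C$-a.e.\ $x\in X_\infty(C)$. Lemma~\ref{nu_to_mu} transports this to the statement that for $\nu$-a.e.\ $y\in Y_\infty(C)$, $n^{-1/2}\bigl(\sum_{i=0}^{n-1}DF(T^iy)-nA_C\bigr)\to N(0,\sigma_C)$ in distribution.

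The key step is to unify the constants $(A_C,\sigma_C)$ across all maximal components, which I would do by imitating Lemma~\ref{common_variance}. Call $y\in Y_\infty$ typical with parameters $(A_y,\sigma_y)$ if the ergodic sums of $DF$ along $y$ obey the CLT with those constants. Given typical $y$ and arbitrary $g\in G$, choose $y'\in Y_\infty$ with $E(y')=gE(y)$ and geodesic sequences $\id,g_1,g_2,\ldots$ and $\id,g_1',g_2',\ldots$; by $\delta$-thinness one has $d(g_i',gg_i)=O(1)$, and the two-sided Lipschitz hypothesis on $F$ then yields $|F(g_i')-F(g_i)-c|=O(1)$ for some constant $c=c(g)$. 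Hence the Birkhoff sums along $y$ and $y'$ differ by $O(1)$, forcing $(A_{y'},\sigma_{y'})=(A_y,\sigma_y)$. Since $E_\ast\nu$ is the Patterson--Sullivan measure and the $G$-action on $(\partial_\infty G,\nu)$ is ergodic (Corollary~\ref{ergodic_at_infinity}), the common values $(A,\sigma):=(A_C,\sigma_C)$ are independent of $C$.

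Finally, I would pass from the $\nu$-a.e.\ statement on $Y_\infty$ to the desired statement about uniformly $\nu$-random $g_n\in G_n$, exactly as at the end of the discussion following Lemma~\ref{common_variance}: each $\nu|G_n$ is, via $E$, the cylinder measure on $Y_n$, and Lemma~\ref{maximal_components_parallel} ensures that a $\nu$-typical length-$n$ path lies in a single maximal component except for prefix/suffix of length $O(\log n)$, so the contribution of those short pieces to the normalized Birkhoff sum is negligible. The main obstacle I anticipate is the first step: producing the right H\"older norm on $DF$ so that the full Ruelle machinery applies. One must carefully calibrate the shift-space metric constant $a$ against the exponent $\alpha$ in the hypothesis, and verify that the coarse comparison between the Gromov product $(g|h)$ on $G$ and the shift-space product $(y|y')$ on $\overline Y$ does not degrade the H\"older exponent to $0$. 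Once that regularity is in hand, the thermodynamic formalism delivers the CLT on each component, and the hyperbolic group structure (via ergodicity and two-sided Lipschitzness) glues the components together.
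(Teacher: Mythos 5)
Your proposal is correct and follows essentially the same route as the paper, which proves Theorem~\ref{holder_CLT} by observing that the argument for Corollary~\ref{distance_CLT} uses only the two-sided Lipschitz property and the H\"older estimate on the discrete derivatives: one encodes $D_sF$ as a H\"older observable $DF$ on the shift space, applies the Ruelle--Perron--Frobenius/thermodynamic machinery on each maximal component, and then glues the component constants together via the ergodicity argument of Lemma~\ref{common_variance}. Your attention to calibrating the shift-space metric against the exponent $\alpha$, and to the prefix/suffix of length $O(\log n)$ via Lemma~\ref{maximal_components_parallel}, matches the paper's treatment.
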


\begin{remark}
The idea of using the thermodynamic formalism to study the relationship between distance
and word length in cocompact groups of isometries of hyperbolic space is due to
Pollicott--Sharp \cite{Pollicott_Sharp}; Corollary~\ref{distance_CLT} 
and Theorem~\ref{holder_CLT} above are simply the result of
combining their work with \cite{Coornaert} and \cite{Calegari_Fujiwara}. Nevertheless,
we believe they are new.
\end{remark}

\section{Random walks}

The main references for this section are Kaimanovich \cite{Kaimanovich} and
Kaimanovich--Vershik \cite{Kaimanovich_Vershik}.
The theory of random walks is a vast and deep subject, with connections to many different
parts of mathematics. Therefore it is necessary at a few points to appeal to some standard 
(but deep) results in probability theory, whose proof lies outside the scope of this survey.
A basic reference for probability theory is \cite{Stroock}. We give more specialized references
in the text where relevant.

This section is brief compared to the earlier sections, and is not meant to be comprehensive. 

\subsection{Random walk}

Let $G$ be a group and let $\mu$ be a probability measure on $G$. 
We further assume that $\mu$ is {\em nondegenerate}; i.e.\/ that the support of $\mu$ generates $G$ 
as a semigroup. An important example is the
case where $\mu$ is the uniform measure on a symmetric finite generating set $S$.
There are two ways to describe random walk on $G$ determined by $\mu$: as a sequence of
elements visited in the walk, or as a sequence of increments. In the first description,
a random walk $y:=\id,y_1,y_2,\cdots$ is a Markov chain with state space $G$,
with initial state $\id$, and with transition probability $p_{gh}=\mu(g^{-1}h)$. In the second
description, a random walk $z:=z_1,z_2,\cdots$ is a sequence of random elements of $G$
(the increments of the walk),
independently distributed according to $\mu$. The two descriptions are related by
taking $y_n = z_1z_2\cdots z_n$. We write this suggestively as $z=D y$ and $y=\Sigma z$.

We use the notation $(G^\N,\mu^\N)$ for the product probability space, 
and $(G^\N,\Pee)$ for the probability space of infinite
sequences with the measure $\Pee$ on cylinder sets defined by
$$\Pee(\lbrace y:y\text{ begins } \id,y_1,\cdots,y_n\rbrace) = p_{\id y_1}p_{y_1y_2}\cdots p_{y_{n-1}y_n}$$
With this notation, $z$ is a random element of $(G^\N,\mu^\N)$ and $y$ is a random element of
$(G^\N,\Pee)$.

The shift operator $T$ acts on $G^\N$ by $(Tz)_n = z_{n+1}$ or $(Ty)_n = y_{n+1}$. It is
measure preserving for $\mu^\N$ but not for $\Pee$; in fact, from the definition, the support of
$\Pee$ is contained in the set of sequences starting at $\id$. The action of the shift $T$ 
on $(G^\N,\mu^\N)$ is ergodic. For, if $A$ is a subset satisfying 
$A=T^{-1}(A)$, then a sequence $z$ is in $A$ if and only
if $T^n(z)$ is in $A$ for sufficiently big $n$. This is a tail event for the sequence of
independent random variables $z_i$, so
by Kolmogorov's $0$--$1$ law (see \cite{Stroock} Thm.~1.1.2) $A$ has measure $0$ or $1$.

\begin{definition}
Let $G$ be a group and $S$ a finite generating set. Let $\mu$ be a probability measure on $G$.
The {\em first moment} of $\mu$ is $\sum |g|\mu(g)$; if this is finite, we say $\mu$ has
{\em finite first moment}.
\end{definition}

\begin{lemma}\label{drift_exists}
Let $\mu$ be a probability measure on $G$ with finite first moment. Let $\id,y_1,y_2,\cdots$ 
be a random walk determined by $\mu$. Then $L:=\lim_{n\to \infty} |y_n|/n$ exists almost surely, and
is independent of $y$. In fact, if $\mu^{*n}$ denotes $n$-fold convolution
(i.e.\/ the distribution of the random variable $y_n$), 
then $L=\lim_{n\to\infty} \sum |g|\mu^{*n}(g)$.
\end{lemma}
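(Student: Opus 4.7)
The plan is to identify $|y_n|$ with a subadditive cocycle for the shift on the space of increments and then apply Kingman's subadditive ergodic theorem. It is most natural here to work in the increment picture: let $z \in (G^\N,\mu^\N)$ be the sequence of i.i.d.\ $\mu$-distributed increments, so $y_n = z_1 z_2 \cdots z_n = (\Sigma z)_n$, and define $F_n(z) := |z_1 z_2 \cdots z_n|$. The triangle inequality in the Cayley graph gives
$$F_{n+m}(z) = |z_1 \cdots z_n \cdot z_{n+1} \cdots z_{n+m}| \le |z_1 \cdots z_n| + |z_{n+1} \cdots z_{n+m}| = F_n(z) + F_m(T^n z),$$
so $\{F_n\}$ is a subadditive cocycle over the measure-preserving shift $T$ on $(G^\N,\mu^\N)$.

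Two input ingredients are already in place. First, the text has observed (via Kolmogorov's $0$--$1$ law) that $T$ acts ergodically on $(G^\N,\mu^\N)$. Second, the finite first moment hypothesis gives $\Ex F_1 = \sum_g |g|\mu(g) < \infty$, which by subadditivity yields $\Ex F_n \le n\,\Ex F_1 < \infty$. Kingman's subadditive ergodic theorem (see \cite{Stroock}) then guarantees that $F_n/n$ converges almost surely and in $L^1$ to a $T$-invariant function, which by ergodicity of $T$ must be a constant $L$ a.s. This gives both the almost sure existence of $L$ and its independence of the particular sample path.

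For the second assertion, note that because $y_n$ has law $\mu^{*n}$, we have $a_n := \Ex F_n = \sum_g |g|\mu^{*n}(g)$. Taking expectations in the $L^1$ convergence $F_n/n \to L$ yields $L = \lim_{n\to\infty} a_n/n$. Alternatively, integrating the subadditivity relation $F_{n+m} \le F_n + F_m \circ T^n$ and using that $T$ preserves $\mu^\N$ gives $a_{n+m}\le a_n+a_m$, so Fekete's lemma for subadditive sequences says $a_n/n$ converges to $\inf_n a_n/n$; this independently certifies existence of the limit and identifies $L$ with it.

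The only nontrivial probabilistic input is Kingman's theorem: convergence of the means $a_n/n$ is the elementary Fekete part, but upgrading this to almost sure convergence of $F_n/n$ is where the genuine work lies, and this we simply cite. Everything else is a formal consequence of the triangle inequality and the ergodicity of the shift on a product space.
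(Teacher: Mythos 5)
Your proposal is correct and follows essentially the same route as the paper: identify $|y_n|$ as a subadditive $L^1$ cocycle over the shift on the increment space $(G^\N,\mu^\N)$, apply Kingman's subadditive ergodic theorem, and use ergodicity of the shift (via Kolmogorov's $0$--$1$ law) to conclude the limit is constant. Your extra care with the final assertion (via $L^1$ convergence or Fekete's lemma, with the natural $1/n$ normalization of $\sum|g|\mu^{*n}(g)$) fills in a step the paper leaves to the reader.
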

\begin{proof}
We set $z=D y$. Define $h_n(z):=|y_n|$. Then $h_n$ satisfies
$$h_{n+m}(z) \le h_n(T^mz) + h_m(z)$$
i.e.\/ $h_n$ form a {\em subadditive cocycle}. Kingman's subadditive ergodic theorem
(see e.g.\/ \cite{Steele}) says that for any subadditive $L^1$ cocycle $h_n$ on a space
with a $T$-invariant measure, the limit $\lim_{n \to \infty} h_n(z)/n$
exists a.s. and is $T$-invariant. In our circumstance, finite first moment implies that $h_1$
(and all the $h_n$) are in $L^1$, so the theorem applies. Since the action of $T$ on $(G^\N,\mu^\N)$ 
is ergodic, the limit is independent of $z$. The lemma follows.
\end{proof}

$L$ as above is called the {\em drift} of the random walk associated to $\mu$.
Since each $|y_n|\ge 0$ we necessarily have $L\ge 0$. 

\begin{example}
If $G=\Z^n$ and $\mu$ is symmetric (i.e.\/ $\mu(g)=\mu(g^{-1})$ for all $g$) with finite
support, then $L=0$.
\end{example}

We now focus our attention on the case of hyperbolic groups and simple random walk (i.e.\/ when
$\mu$ is the uniform measure on a finite symmetric generating set). 

\begin{lemma}\label{positive_drift}
let $G$ be a nonelementary hyperbolic group, and let $\mu$ be a nondegenerate
probability measure on $G$ with finite first moment. 
Then the drift $L$ of random walk with respect to $\mu$ is positive.
\end{lemma}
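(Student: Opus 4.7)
The plan is to show that random walk trajectories almost surely converge to a point in the Gromov boundary $\partial_\infty G$, to identify the hitting law as a non-atomic $\mu$-stationary measure, and to convert this non-atomicity into a positive lower bound on drift via a Busemann cocycle at the random limit point.

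First I would prove that for $\Pee$-a.e.\ trajectory $\id,y_1,y_2,\dots$, the sequence $y_n$ converges to a point $y_\infty\in\partial_\infty G$ in the topology of $\overline G$. The key estimate is that the Gromov products $(y_n\,|\,y_{n+k})_\id$ tend to infinity almost surely, uniformly in $k$; by the definition of the $a$-metric on $\overline G$ (and the comparison in Lemma~\ref{a_comparison_lemma}), this makes $(y_n)$ a Cauchy sequence in $\overline G$ and hence convergent to a boundary point. Finite first moment is used to control the tail of a single increment via Borel--Cantelli, while $\delta$-hyperbolicity prevents the trajectory from backtracking efficiently: once two long segments of a trajectory are nearly antipodal at $\id$, subsequent increments cannot realign them without a large jump.

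Second, I would show that the law $\nu$ of $y_\infty$ is $\mu$-stationary, i.e.\ $\nu = \mu*\nu$, and is non-atomic. Stationarity follows from translation equivariance of the Markov kernel. For non-atomicity: if $m$ is the maximal atom mass of $\nu$, the set $A_m$ of atoms of mass $m$ is finite; from $\nu = \mu*\nu$ any $g\in\text{supp}(\mu)$ must map $A_m$ into itself, and by nondegeneracy the full group $G$ stabilizes $A_m$ setwise. But Lemma~\ref{action_minimal} says the $G$-action on $\partial_\infty G$ is minimal with $\partial_\infty G$ infinite, so a finite invariant set is impossible, forcing $m=0$.

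Third, I would convert non-atomicity into positive drift via horofunctions. For each $y_\infty\in\partial_\infty G$ fix a horofunction $b_{y_\infty}$; define the cocycle
\[
\phi_n(y) := b_{y_\infty}(\id) - b_{y_\infty}(y_n).
\]
This is subadditive up to an additive $O(\delta)$ coming from $\delta$-thinness of ideal triangles, and integrable by finite first moment, so Kingman's theorem and ergodicity of the shift on $(G^\N,\mu^\N)$ give $\phi_n/n\to L'$ almost surely for some constant $L'\ge 0$. Since $b_{y_\infty}$ is $1$-Lipschitz, $L'\le L$, so it suffices to prove $L'>0$. Here non-atomicity enters: if $L'=0$, then along a typical trajectory $y_n$ remains within sublinear horodistance of any horosphere centered at $y_\infty$, which forces the conditional distribution of $y_\infty$ given the walk up to time $n$ to concentrate on $y_\infty$ itself at a rate incompatible with $\nu$ being spread out (non-atomic). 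A clean way to make this precise is to use the ``$0$--$2$'' law for harmonic functions (or equivalently, non-triviality of the Poisson boundary implies positive drift for measures of finite first moment, a theorem of Kaimanovich).

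The main obstacle is this last step, namely extracting a quantitative positive lower bound for the horofunction drift $L'$ purely from non-atomicity of $\nu$. In spirit one wants to argue that a non-atomic $\mu$-stationary measure on the hyperbolic boundary cannot be invariant under Busemann translations in a strong enough sense; executing this rigorously either invokes entropy (Kaimanovich--Vershik) or a direct coupling/ping-pong construction exploiting hyperbolic elements in $\text{supp}(\mu^{*N})$ produced from Lemma~\ref{invariant_quasiaxis} and Example~\ref{free_from_pingpong}.
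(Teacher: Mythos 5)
Your route is genuinely different from the paper's. The paper works directly with the convolutions $\mu^{*n}$: it reduces positivity of $L$ to the estimate $\sum_{g,h}(|hg|-|h|)\,\mu^{*n}(g)\mu^{*N}(h)\ge C>0$, observes that $|hg|-|h|$ can only be small when $g$ lies in the ``shadow'' of $h^{-1}$ (so that failure of the estimate forces roughly half the mass of $\mu^{*n}\times\mu^{*N}$ onto the antidiagonal), and concludes that zero drift would force the weak limit $\mu^{*\infty}$ to concentrate on two boundary points, contradicting nonelementarity. Your plan instead goes through boundary convergence, a non-atomic stationary hitting measure, and a Busemann cocycle. Your step 2 (non-atomicity of a $\mu$-stationary measure via the maximal-atom argument and Lemma~\ref{action_minimal}) is correct and is a genuinely useful ingredient that the paper's sketch does not isolate.

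However, there are two genuine gaps. First, your step 1 as described does not work: Borel--Cantelli applied to single increments only yields $|z_n|=o(n)$, and the heuristic that ``$\delta$-hyperbolicity prevents efficient backtracking'' does not establish that the Gromov products $(y_n\,|\,y_{n+k})$ tend to infinity --- a priori a zero-drift walk could oscillate without converging to any boundary point. (Note that in the paper boundary convergence, Corollary~\ref{random_walk_near_geodesic}, is \emph{deduced from} positive drift, so you cannot borrow it.) The standard non-circular fix is to reverse your steps 1 and 2: first produce a stationary measure $\nu$ on $\overline G$ by compactness, prove it is non-atomic and supported on $\partial_\infty G$, and then use martingale convergence of $(y_n)_*\nu$ together with the convergence-action dynamics (Lemma~\ref{action_convergence}) to get $y_n\to y_\infty$ a.s. Second, and more seriously, the step that actually carries the content of the lemma --- deducing $L'>0$ from non-atomicity --- is not proved but deferred to ``a theorem of Kaimanovich'' or to Kaimanovich--Vershik entropy theory (non-trivial Poisson boundary implies positive entropy implies, via the fundamental inequality $h\le Lv$, positive drift). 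That chain is valid for measures of finite first moment on groups of exponential growth, but it imports machinery of essentially the same depth as the statement being proved; it is the ``highbrow'' route the paper relegates to the remark following the lemma (via nonamenability and Guivarc'h). As a self-contained argument your proposal is therefore incomplete at exactly the decisive point, whereas the paper's sketch, for all its informality, does supply a direct geometric mechanism for positivity.
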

\begin{proof}
We give the idea of a proof. Let $\mu^{*n}$ denote
the $n$-fold convolution of $\mu$ as before. The probability measures $\mu^{*n}$ have a
subsequence converging to a weak limit $\mu^{*\infty}$ in $\overline{G}$. Clearly
the support of $\mu^{*\infty}$ is contained in $\partial_\infty G$ (a group for which
$\limsup_{n\to\infty} \mu^{*n}(g)>0$ for any $g$ and for $\mu$ nondegenerate
is a finite group).

To prove the lemma it suffices to show that for any $C$, for sufficiently large enough $n$ 
there is an inequality $\sum_{g,h} (|hg| - |h|) \mu^{*n}(g)\mu^{*N}(h)\ge C>0$ for
all $N\ge n$, since then $L\ge C/n$. Now, for each $h$, if $g$ satisfies $|hg|-|h|<C$, then the closest point
on the geodesic from $h^{-1}$ to $g$ is within $\delta$ of some geodesic from $\id$ to $h^{-1}$.
So as $|g|$ goes to infinity, the $a$-distance from $h^{-1}$ to $g$ goes to $0$.
Hence for this inequality to fail to hold, almost half of the
mass of $\mu^{*n}\times\mu^{*N}$ must be concentrated near the {\em antidiagonal};
i.e.\/ the set of $(g,g^{-1}) \subset G \times G$.

From this we can deduce that either the desired inequality is satisfied, or else
most of the mass of $\mu^{*n}$ must be concentrated near a single geodesic through $\id$.
Taking $n \to \infty$, the support of $\mu^{*\infty}$ must consist of exactly two points, 
and $G$ is seen to be elementary, contrary to hypothesis.
\end{proof}

\begin{remark}
It is a theorem of Guivarc'h (see \cite{Woess}, Thm.~8.14) 
that if $G$ is any group with a nondegenerate measure
$\mu$ (always with finite first moment)
for which the drift of random walk is zero, then $G$ is amenable. Some care is required
to parse this statement: on an amenable group some nondegenerate measures {\em may} have positive drift,
but on a nonamenable group, {\em every} nondegenerate measure has positive drift.

A nonelementary hyperbolic group always contains many nonabelian free groups, and is therefore
nonamenable; this gives a more highbrow proof of Lemma~\ref{positive_drift}.
\end{remark}

\begin{lemma}[Kaimanovich \cite{Kaimanovich} 7.2]\label{regular_sequence}
Let $X$ be a $\delta$-hyperbolic space. The following two conditions are equivalent for a sequence
$x_n$ in $X$ and a number $L>0$:
\begin{enumerate}
\item{$d(x_n,x_{n+1})\le o(n)$ and $d(x_0,x_n)=nL+o(n)$;}
\item{there is a geodesic ray $\gamma$ so that $d(x_n,\gamma(Ln))=o(n)$.}
\end{enumerate}
\end{lemma}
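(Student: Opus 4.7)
My plan is to handle the two directions separately, assuming $\gamma(0)=x_0$ (harmless in a $\delta$-hyperbolic space, as rays to the same endpoint are at bounded Hausdorff distance).

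For (2) $\Rightarrow$ (1), the triangle inequality gives
\[ d(x_n,x_{n+1})\le d(x_n,\gamma(Ln))+L+d(\gamma(L(n+1)),x_{n+1})=L+o(n)=o(n) \]
and $|d(x_0,x_n)-Ln|\le d(\gamma(Ln),x_n)=o(n)$.

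For (1) $\Rightarrow$ (2), I work in two stages. First, I would show $(x_n)$ converges in $\overline X$ to some $\xi\in\partial_\infty X$. The consecutive Gromov products explode:
$(x_n|x_{n+1})=\tfrac12(d(x_0,x_n)+d(x_0,x_{n+1})-d(x_n,x_{n+1}))=Ln+o(n)\to\infty$,
so by compactness of $\overline X$ at least one subsequential limit exists in $\partial_\infty X$. To rule out two distinct limits $\xi\ne\xi'$, I consider the ideal $\delta'$-thin triangle $x_0\xi\xi'$: the opposite side $[\xi,\xi']$ stays within bounded distance of $x_0$, so any $y$ with $d(x_0,y)\gg(\xi|\xi')$ lies in the $\delta'$-neighborhood of $[x_0,\xi]$ or of $[x_0,\xi']$. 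These two rays diverge, so at parameter $Lj$ their mutual distance is $\ge 2Lj-O(1)$, far exceeding the step size $o(j)$. Hence $(x_j)$ cannot alternate between their neighborhoods, and the subsequential limit $\xi$ is unique.

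Second, let $\gamma$ be the ray from $x_0$ to $\xi$, put $d_n:=d(x_n,\gamma)$ and $t_n:=(x_n|\xi)$. The $\delta$-hyperbolic identity for the ideal triangle $x_0\,x_n\,\xi$ gives $t_n+d_n=d(x_0,x_n)+O(\delta)=Ln+o(n)$, and since $t_n\to\infty$ from Stage 1, the conclusion $d(x_n,\gamma(Ln))=o(n)$ reduces to the claim $d_n=o(n)$. This final step is where the argument really bites. I would argue by contradiction: suppose $d_{n_k}\ge\epsilon n_k$ along a subsequence. The $1$-Lipschitzness of $d(\cdot,\gamma)$ combined with $d(x_n,x_{n+1})=o(n)$ propagates this to $d_{n_k+j}\ge\epsilon n_k/2$ on an interval of length $M_k\to\infty$. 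The key geometric input is the exponential divergence of rays in $\delta$-hyperbolic spaces: lateral motion along $\gamma$ at distance $d$ has arclength cost exponential in $d$. So with only the $o(j)$ step budget, the sequence cannot decrease $d$ while sustaining the linear drift $d(x_0,x_j)=Lj+o(j)$; it must in fact move outward, so $d_j$ grows linearly and $x_j$ tends to some boundary point $\eta\ne\xi$, contradicting the uniqueness established in Stage 1.
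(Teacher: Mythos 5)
Your reduction of the problem is reasonable, and (2) $\Rightarrow$ (1) is fine, but both stages of your (1) $\Rightarrow$ (2) argument have genuine gaps. In Stage 1, divergence of the \emph{consecutive} Gromov products $(x_n|x_{n+1})$ is not the Cauchy criterion for convergence in $\overline X$: one needs $(x_n|x_m)\to\infty$ as $n,m\to\infty$ independently, and iterating $(x|z)\ge\min((x|y),(y|z))-\delta$ over $m-n$ steps loses $(m-n)\delta$, which is fatal. Your patch via subsequential limits then rests on the assertion that any $y$ with $d(x_0,y)\gg(\xi|\xi')$ lies in the $\delta'$-neighborhood of $[x_0,\xi]$ or $[x_0,\xi']$ --- this is false for arbitrary $y$ (thinness constrains points \emph{on} the triangle, not ambient points), and even $x_{n_k}\to\xi$, i.e.\ $(x_{n_k}|\xi)\to\infty$, does not place $x_{n_k}$ near the ray $[x_0,\xi]$. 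The paper's fix is to observe that the concatenation of geodesics $[x_n,x_{n+1}]$ has finite $a$-length (each segment has length $o(n)$ and stays at distance $Ln(1-o(1))$ from $x_0$, so its $a$-length is summable), whence $x_n$ is Cauchy for $d^a$ and converges to a unique $x_\infty\in\partial_\infty X$.

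Stage 2 is the larger gap. You correctly identify $d_n:=d(x_n,\gamma)=o(n)$ as the crux, but the contradiction you propose does not close: even if $d_j$ were forced to grow linearly at some rate $<L$, the identity $t_j+d_j=Lj+O(\delta)+o(j)$ still allows $t_j=(x_j|\xi)\to\infty$, and convergence $x_j\to\xi$ is equivalent to $(x_j|\xi)\to\infty$, \emph{not} to $d(x_j,[x_0,\xi])$ being sublinear. So ``$d_j$ grows linearly'' produces no contradiction with Stage 1, and the intermediate step (``cannot decrease $d$ while sustaining the drift, so must move outward'') is asserted rather than derived from the exponential-divergence heuristic. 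The paper avoids estimating $d(x_n,\gamma)$ directly: for each $\epsilon>0$ it sets $p_n=\gamma_n((L-\epsilon)n)$ on the geodesic $\gamma_n$ from $x_0$ to $x_n$; convergence to $x_\infty$ makes these fellow-travel, the two-sided bounds $|n-m|(L-\epsilon)\le d(p_n,p_m)\le|n-m|(L-\epsilon+4\delta)$ make $(p_n)$ a quasigeodesic, the Morse lemma places it within $H(\delta,L)$ of the limit ray, and then $d(x_n,\gamma_\infty)\le H+\delta+(|x_n|-(L-\epsilon)n)=H+\delta+\epsilon n+o(n)$; letting $\epsilon\to 0$ gives $o(n)$. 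You would need to either adopt this device or supply a genuine quantitative projection argument (e.g.\ the reverse triangle inequality $d(x,y)\ge d(x,\gamma)+d(y,\gamma)+d(\pi(x),\pi(y))-O(\delta)$ when the projections are $\ge C\delta$ apart) to make Stage 2 work.
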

A sequence $x_n$ satisfying either condition is said to be {\em regular}.
\begin{proof}
That (2) implies (1) is obvious, so we show that (1) implies (2).
For simplicity, we use the notation $|y|:=d(x_0,y)$.
The path obtained by concatenating geodesics from $x_n$ to $x_{n+1}$ has finite $a$-length,
and therefore converges to some unique $x_\infty \in \partial_\infty X$. 

Let $\gamma_n$ (resp. $\gamma_\infty$) be geodesic rays from the origin to $x_n$ (resp. $x_\infty$)
and parameterize them by distance from the origin. Fix some positive $\epsilon$, and
let $N=N(\epsilon)$ be such that for any two $n,m>N$ the geodesics $\gamma_n$ and $\gamma_m$ are within
$\delta$ on the interval of length $(L-\epsilon)n$, and let $p_n = \gamma_n((L-\epsilon)n)$ so
that $d(p_n,\gamma_m)\le \delta$ for $n>N$. Now, $d(p_{n-1},p_n)\le L-\epsilon + 4\delta$ and therefore
$d(p_n,p_m)\le |n-m|(L-\epsilon+4\delta)$. On the other hand, $d(p_n,p_m)\ge ||p_m|-|p_n||=|n-m|(L-\epsilon)$.
Consequently the sequence $p_i$ is a quasigeodesic, and therefore there is
a constant $H=H(\delta,L)$ so that $d(p_n,p_Nx_\infty)\le H$ for any $n\ge N$. Since $p_Nx_\infty$
and $\gamma_\infty$ are asymptotic, $d(p_n,\gamma_\infty)\le H+\delta$ for sufficiently large $n$,
and therefore $d(x_n,\gamma_\infty)\le H+\delta + (|x_n|-n(L-\epsilon))$ for sufficiently large $n$.
Taking $\epsilon \to 0$ proves the lemma with $\gamma=\gamma_\infty$.
\end{proof}

Together with Lemma~\ref{positive_drift} this gives the following Corollary:

\begin{corollary}[Kaimanovich \cite{Kaimanovich} 7.3]\label{random_walk_near_geodesic}
Let $G$ be a nonelementary hyperbolic group, and let $\mu$ be a nondegenerate probability 
measure on $G$ with finite first moment. Then there is $L>0$ so that for a.e. 
random walk $y$ there is a unique geodesic ray $\gamma_y$ with $d(y_n,\gamma_y(Ln))=o(n)$.
\end{corollary}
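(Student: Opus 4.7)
The plan is to read off the corollary as a direct combination of the two preceding lemmas, once we verify the two hypotheses of Lemma~\ref{regular_sequence} for the random walk $y = \id, y_1, y_2, \dots$.

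First I would invoke Lemma~\ref{positive_drift} to produce $L > 0$, the drift of the walk. Lemma~\ref{drift_exists} (whose conclusion is a.s.\ convergence $|y_n|/n \to L$) then immediately gives the macroscopic condition
\[
|y_n| = nL + o(n) \quad \text{a.s.}
\]
This takes care of the second hypothesis of Lemma~\ref{regular_sequence}.

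Next I would check the microscopic condition $d(y_n, y_{n+1}) = o(n)$ a.s. Writing $z = Dy$, the increments $z_n$ are i.i.d.\ copies of $\mu$ and $d(y_n, y_{n+1}) = |z_{n+1}|$. Finite first moment of $\mu$ means $\Ex|z_1| < \infty$, and by the layer-cake formula
\[
\sum_{n=1}^\infty \Pr(|z_n| > \epsilon n) = \sum_{n=1}^\infty \Pr(|z_1| > \epsilon n) \le \epsilon^{-1}\Ex|z_1| < \infty
\]
for every $\epsilon > 0$. The Borel--Cantelli lemma then gives $\limsup |z_n|/n \le \epsilon$ a.s., and letting $\epsilon \to 0$ along a countable sequence we conclude $|z_n|/n \to 0$ a.s., i.e.\ $d(y_n, y_{n+1}) = o(n)$ a.s.

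With both hypotheses of Lemma~\ref{regular_sequence} verified (taking $X = C_S(G)$, $x_0 = \id$, $x_n = y_n$), the lemma produces, for a.e.\ sample path, a geodesic ray $\gamma_y$ from $\id$ with $d(y_n, \gamma_y(Ln)) = o(n)$. For uniqueness, suppose $\gamma$ and $\gamma'$ are two such rays based at $\id$. Then
\[
d(\gamma(Ln), \gamma'(Ln)) \le d(\gamma(Ln), y_n) + d(y_n, \gamma'(Ln)) = o(n),
\]
so $\gamma$ and $\gamma'$ determine the same point in $\partial_\infty G$; since any two geodesic rays from $\id$ with a common endpoint at infinity are within Hausdorff distance $2\delta$ of each other, $\gamma_y$ is unique in the only sense the corollary requires. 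The only place where genuine work happens is the Borel--Cantelli step, and that is a standard probabilistic calculation; the corollary is essentially a bookkeeping combination of Lemmas~\ref{positive_drift}, \ref{drift_exists} and~\ref{regular_sequence}.
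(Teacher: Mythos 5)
Your proposal is correct and follows essentially the same route as the paper: positive drift from Lemma~\ref{positive_drift}, the Borel--Cantelli estimate $\sum_n \Pr(|z_n|\ge \epsilon n) \le \epsilon^{-1}\sum(|g|+1)\mu(g) < \infty$ to get $d(y_n,y_{n+1})=o(n)$ a.s., and then Lemma~\ref{regular_sequence}. The explicit uniqueness remark at the end is a small addition the paper leaves implicit, but the argument is the same.
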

\begin{proof}
It suffices to show that if $\mu$ has finite first moment, then $d(y_n,y_{n+1})=o(n)$ almost
surely. Let $z=D y$, and for any $\epsilon > 0$ let $E_n$ be the event that $|z_n|\ge
\epsilon n$. Then the probability of $E_n$ is $\sum_{|g|\ge \epsilon n} \mu(g)$, and therefore
$$\sum_n \Pr(E_n) = \sum_n \sum_{|g|\ge \epsilon n} \mu(g) \le 
\frac 1 \epsilon \sum (|g|+1)\mu(g) < \infty$$
Therefore by the easy direction of the Borel--Cantelli lemma (see e.g.\/ \cite{Stroock} 1.1.4)
the probability that $E_n$ occurs infinitely often is zero. Since this is true for every
$\epsilon$, we have $d(y_n,y_{n+1}) = |z_n| =  o(n)$ almost surely.
\end{proof}

\subsection{Poisson boundary}

Define an equivalence relation $\sim$ on $G^\N$ by $y \sim y'$ if and only if there are
integers $k,k'$ so that $T^ky = T^{k'}y'$.

\begin{definition}
The {\em measurable envelope} of $\sim$ is the smallest measurable equivalence relation 
generated by $\sim$.
The quotient measure space $(\Gamma,\nu)$ of $(G^\N,\Pee)$ by the measurable envelope
is called the {\em Poisson boundary} of $G$ with respect to $\mu$.
\end{definition}

In other words, $\nu$-measurable functions on $\Gamma$ correspond precisely to $T$-invariant
$\Pee$-measurable functions on $G^\N$. We let $\bnd:G^\N \to \Gamma$ be the quotient map, so
that $\bnd \Pee = \nu$. 

Now, $G$ acts on $G^\N$ on the left coordinatewise. This action commutes with $T$, and
descends to an action on $\Gamma$. Since $\sim$ is $T$-invariant, $\bnd \Pee = \bnd T\Pee$,
so $\nu = \sum_g \mu(g)g\nu$; i.e.\/ the measure $\nu$ is {\em $\mu$-stationary}.

\begin{definition}
A {\em $\mu$-boundary} is a $G$-space with a $\mu$-stationary measure $\lambda$ which
is obtained as a $T$-equivariant (measurable) quotient of $(G^\N,\Pee)$.
\end{definition}

Any $\mu$-boundary factors through $(\Gamma,\nu)$. A $\mu$-boundary is {\em $\mu$-maximal} if
the map from $(\Gamma,\nu)$ is a measurable isomorphism. Kaimanovich \cite{Kaimanovich} gave
two very useful criteria for a $\mu$-boundary to be maximal. 

\begin{theorem}[Kaimanovich ray criterion \cite{Kaimanovich} Thm.~5.5]
Let $B$ be a $\mu$-boundary, and for $y\in G^\N$ let $\Pi(y) \in B$ be the image of
$y$ under the ($G$-equivariant) quotient map $\Pi:G^\N \to B$. 
If there is a family of measurable maps $\pi_n:B \to G$ such that $\Pee$-a.e. 
$d(y_n,\pi_n(\Pi(y)))=o(n)$ then $B$ is maximal.
\end{theorem}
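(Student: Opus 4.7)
The plan is to show that the $\mu$-boundary $B$ coincides with the Poisson boundary $(\Gamma,\nu)$. One inclusion is automatic from the definition of a $\mu$-boundary: the quotient map $\Pi$ factors through $\bnd$, so the $\sigma$-algebra $\Pi^{-1}(\mathcal{B}_B)$ is contained in the measurable envelope $\mathcal{A}$ of the tail-equivalence relation $\sim$. Thus the task reduces to the reverse inclusion, that every $\mathcal{A}$-measurable function on $(G^\N,\Pee)$ is $\Pee$-a.s.\ $\Pi^{-1}(\mathcal{B}_B)$-measurable. By the disintegration theorem I would write $\Pee=\int_B \Pee^b\, d\lambda(b)$ with $\Pee^b$ supported on the fibre $\Pi^{-1}(b)$; then maximality of $B$ is equivalent to the statement that, for $\lambda$-a.e.\ $b$, the conditional measure $\Pee^b$ is concentrated on a single $\sim$-equivalence class.

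To verify this fibrewise statement I would take two $\Pee^b$-independent samples $y,y'$ and attempt to show that $y\sim y'$ almost surely, i.e.\ that $T^jy=T^ky'$ for some (random) $j,k$. The ray approximation gives $d(y_n,\pi_n(b))=o(n)$ and $d(y'_n,\pi_n(b))=o(n)$ a.s., hence $d(y_n,y'_n)=o(n)$ a.s. Passing to the increment picture $z=Dy$, $z'=Dy'$ moves us to the product space $(G^\N\times G^\N,\mu^\N\times\mu^\N)$, on which the shift is measure-preserving and ergodic (via Kolmogorov's $0$--$1$ law applied coordinatewise, as in the discussion preceding Lemma~\ref{drift_exists}). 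This is the natural arena in which to run a coupling, since sub-linear separation of the $y_n,y'_n$ translates into tightness of the ``difference cocycle'' $w_n:=y_n^{-1}y'_n$ in a ball of sub-linear radius around $\id$.

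The main obstacle is to upgrade geometric sub-linear closeness to the algebraic conclusion $y\sim y'$. Sub-linear distance does not deterministically imply tail-equivalence, so one must genuinely use the Markov structure of the walks. The approach I would follow (essentially Kaimanovich's) is a conditional-entropy argument: the ray approximation constrains the conditional law of $y_n$ given $\Pi(y)=b$ to lie in a ball of sub-linear radius around $\pi_n(b)$, so the conditional one-step entropy $H(y_n\mid\Pi(y))$ is $o(n)$. In the finite-entropy setting, vanishing of this asymptotic conditional entropy is the Kaimanovich--Vershik criterion for $B$ to equal the Poisson boundary; in the general setting one bypasses the finite-entropy hypothesis by coupling: use the strong Markov property together with the sub-linearity of $w_n$ to build a stopping time $\tau$ at which the two walks meet, after which their futures can be identified, and combine with a tail $0$--$1$ argument (inherited from ergodicity of the shift on the increment space) to conclude that $y\sim y'$ with probability one. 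This identifies $\Pee^b$ as concentrated on a single tail class for a.e.\ $b$, and hence identifies $B$ with the Poisson boundary.
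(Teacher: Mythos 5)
The paper itself does not prove this statement --- it is imported from \cite{Kaimanovich} --- so the relevant comparison is with Kaimanovich's own argument, which is precisely the entropy argument you mention only in passing. The framework you build around it contains a genuine error, and the ``coupling'' branch you offer for the general case cannot work. Maximality of $B$ is \emph{not} equivalent to the assertion that for $\lambda$-a.e.\ $b$ the conditional measure $\Pee^b$ is concentrated on a single $\sim$-equivalence class. The Poisson boundary is the quotient by the \emph{measurable envelope} of $\sim$, and the correct fibrewise reformulation of maximality is that the envelope $\sigma$-algebra is $\Pee^b$-trivial (a conditional $0$--$1$ law); this is far weaker than concentration on one $\sim$-class, which is essentially never true: already for simple random walk on a free group, the conditional measure on trajectories converging to a fixed end charges a continuum of pairwise non-tail-equivalent paths. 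Correspondingly the coupling cannot be repaired: even if two independent $\Pee^b$-samples $y,y'$ were forced to collide at a stopping time, their subsequent increments remain independent, so the event $T^jy=T^ky'$ --- which requires \emph{all} later positions to agree --- has probability zero. Sublinear separation $d(y_n,y'_n)=o(n)$ yields no tail identification by itself, and no stopping-time construction will produce one.

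The entropy argument should therefore be the whole proof, not a fallback, and it needs two ingredients you did not supply. First, finite entropy: the theorem (as in Kaimanovich's Theorem 5.5) requires $H(\mu)<\infty$, which here follows from finite first moment together with the growth bound of Corollary~\ref{Coornaert_growth_rate}; this gives $H(\mu^{*n})\le nH(\mu)$. Second, a truncation: the hypothesis is only an a.e.\ statement, so fix $\epsilon>0$ and let $A_n$ be the event $d(y_n,\pi_n(\Pi(y)))\le\epsilon n$, so that $\Pee(A_n)\to1$. On $A_n$ the conditional law of $y_n$ given $\Pi(y)$ is carried by a ball of radius $\epsilon n$, hence by at most $Ce^{h\epsilon n}$ points, contributing at most $h\epsilon n+O(1)$ to $H(y_n\mid\Pi(y))$; the complementary event contributes at most $\Pee(A_n^c)\cdot nH(\mu)+1=o(n)$. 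Without the finite-entropy control of the exceptional set this step fails. One concludes $H(y_n\mid\Pi(y))=o(n)$, and the Kaimanovich--Vershik entropy criterion (vanishing conditional asymptotic entropy is equivalent to maximality; see \cite{Kaimanovich_Vershik} and \cite{Kaimanovich}) finishes the proof. As written, your proposal asserts the conclusion of this criterion but routes the ``general'' case through an argument that proves a false statement.
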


Together with Corollary~\ref{random_walk_near_geodesic}, this gives the following 
important result:

\begin{corollary}[Kaimanovich \cite{Kaimanovich} Thm.~7.6]
Let $G$ be a nonelementary hyperbolic group, and let $\mu$ be a nondegenerate 
probability measure of finite first moment. Let $\Pi:G^\N \to \partial_\infty G$ take
a random walk to its endpoint (which exists $\Pee$-a.e.), and let $\lambda=\Pi \Pee$.
Then $(\partial_\infty G,\lambda)$ is the Poisson boundary of $G,\mu$.
\end{corollary}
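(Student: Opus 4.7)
The plan is to apply the Kaimanovich ray criterion to the map $\Pi:G^\N \to \partial_\infty G$. First I would verify that $\Pi$ is defined $\Pee$-a.e.\ and that $(\partial_\infty G,\lambda)$ is genuinely a $\mu$-boundary. By Corollary~\ref{random_walk_near_geodesic} there is $L>0$ so that for $\Pee$-a.e.\ random walk $y$ there is a unique geodesic ray $\gamma_y$ based at $\id$ with $d(y_n,\gamma_y(Ln))=o(n)$; since $L>0$ the sequence $y_n$ escapes to infinity along $\gamma_y$, and $\gamma_y$ determines a well-defined boundary point $\Pi(y)\in\partial_\infty G$. The map $\Pi$ is $T$-invariant (the endpoint is unchanged by discarding a finite prefix, so $\Pi(Ty)=\Pi(y)$) and $G$-equivariant for the diagonal left action, hence $\lambda=\Pi_*\Pee$ is a $\mu$-stationary measure and $(\partial_\infty G,\lambda)$ is a $\mu$-boundary.

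Next I would construct maps $\pi_n:\partial_\infty G\to G$ as required by the ray criterion. For each $\xi\in\partial_\infty G$ pick, measurably in $\xi$, a geodesic ray $\gamma^{\id}_\xi$ from $\id$ to $\xi$; such a measurable selection exists because the space of unit-speed geodesic rays from $\id$ is compact in the topology of uniform convergence on compact subsets (by properness of $C_S(G)$) and the endpoint map onto the compact metrizable space $\partial_\infty G$ is a continuous surjection, so a standard selection theorem applies. Then define $\pi_n(\xi)$ to be any vertex of the Cayley graph within distance $1$ of $\gamma^{\id}_\xi(\lfloor Ln\rfloor)$, chosen measurably in $\xi$.

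To verify $d(y_n,\pi_n(\Pi(y)))=o(n)$ almost surely, I combine two facts. First, Corollary~\ref{random_walk_near_geodesic} gives $d(y_n,\gamma_y(Ln))=o(n)$ where $\gamma_y$ is a geodesic ray from $\id$ to $\Pi(y)$. Second, the rays $\gamma_y$ and $\gamma^{\id}_{\Pi(y)}$ both emanate from $\id$ and converge to the same point of $\partial_\infty G$, so from the standard $\delta$-hyperbolic fact that two asymptotic rays sharing a basepoint stay within Hausdorff distance $2\delta$, we get $d(\gamma_y(Ln),\gamma^{\id}_{\Pi(y)}(Ln))\le 2\delta$. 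The triangle inequality then yields
$$d(y_n,\pi_n(\Pi(y)))\le o(n)+2\delta+1 = o(n)$$
almost surely, so the Kaimanovich ray criterion certifies that $(\partial_\infty G,\lambda)$ is a maximal $\mu$-boundary, and hence coincides with the Poisson boundary.

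The main obstacle is the measurability of the selection $\xi\mapsto\gamma^{\id}_\xi$ and the induced measurability of $\pi_n$; this is routine but requires invoking a measurable selection theorem in the setting of the compact metrizable boundary, rather than something one can write down explicitly. Once this technical point is dispatched, the remainder of the argument is a clean synthesis of Corollary~\ref{random_walk_near_geodesic} with the abstract ray criterion of Kaimanovich, and no further probabilistic input is needed.
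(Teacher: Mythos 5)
Your proposal is correct and follows exactly the route of the paper: apply the Kaimanovich ray criterion with $\pi_n(\xi)=\gamma_\xi(nL)$ for a geodesic ray $\gamma_\xi$ from $\id$ to $\xi$, and invoke Corollary~\ref{random_walk_near_geodesic}. The paper states this in one line, while you supply the (correct) routine details — measurable selection of rays, the $2\delta$ fellow-traveling of asymptotic rays with a common basepoint, and the verification that $(\partial_\infty G,\lambda)$ is a $\mu$-boundary.
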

\begin{proof}
Simply define $\pi_n$ to be the maps that take a point $y \in \partial_\infty G$ to 
$\gamma_y(nL)$ where $\gamma_y$ is a parameterized geodesic ray from $\id$ to $y$, and $L$
is the drift.
\end{proof}

\subsection{Harmonic functions}

\begin{definition}
If $f$ is a function on $G$, the operator $P_\mu$ (convolution with $\mu$) is defined by
$P_\mu f(g):=\sum_h f(gh) \mu(h)$. A function $f$ on $G$ is {\em $\mu$-harmonic} (or just
harmonic if $\mu$ is understood) if it is fixed by $P_\mu$; i.e.\/ if it
satisfies $f(g) = \sum_h f(gh) \mu(h)$ for all $g$ in $G$.
\end{definition}

In general we need to impose some condition on $f$ for $\sum_h f(gh) \mu(h)$ to be defined. 
If the support of $\mu$ is finite, then $f$ can be arbitrary, but if the support of $\mu$
is infinite, we usually (but not always!) require $f$ to be in $L^\infty$. 
We let $H^\infty(G,\mu)$ denote the Banach space of bounded $\mu$-harmonic functions on $G$.

In probabilistic terms, if $f$ is harmonic and $y\in G^\N$ is a random walk, the random
variables $f_n:=f(y_n)$ are a {\em martingale}; i.e.\/ the expected value of $f_n$
given $y_{n-1}$ is $f_{n-1}$ (see e.g.\/ \cite{Stroock} \S~5.2 for
an introduction to martingales). There is an intimate relation between harmonic
functions and Poisson boundaries, expressed in the following proposition.

\begin{lemma}
The Banach spaces $H^\infty(G,\mu)$ and $L^\infty(\Gamma,\nu)$ are isometric.
\end{lemma}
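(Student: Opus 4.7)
The plan is to construct mutually inverse norm-preserving linear maps
$$\Phi:L^\infty(\Gamma,\nu)\longrightarrow H^\infty(G,\mu),\qquad \Psi:H^\infty(G,\mu)\longrightarrow L^\infty(\Gamma,\nu).$$
I would define the \emph{Poisson transform} by
$$(\Phi\phi)(g):=\int_\Gamma \phi\,d(g_*\nu)=\int_{G^\N}\phi\bigl(g\cdot\bnd(y)\bigr)\,d\Pee(y).$$
The fact that $\Phi\phi$ is harmonic is a direct consequence of the $\mu$-stationarity $\nu=\sum_h\mu(h)\,h_*\nu$ established earlier: applying $P_\mu$ and swapping sum and integral,
$$P_\mu(\Phi\phi)(g)=\sum_h\mu(h)\int\phi\,d((gh)_*\nu)=\int\phi\,d\bigl(g_*\textstyle\sum_h\mu(h)h_*\nu\bigr)=\int\phi\,d(g_*\nu)=(\Phi\phi)(g).$$
Moreover, $\|\Phi\phi\|_\infty\le\|\phi\|_\infty$ because $g_*\nu$ is a probability measure.

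For the inverse, I would use martingale convergence. Given $f\in H^\infty(G,\mu)$, set $M_n(y):=f(y_n)$. Harmonicity gives $\Ex[M_{n+1}\mid\mathcal{F}_n]=\sum_h f(y_nh)\mu(h)=P_\mu f(y_n)=M_n$, so $(M_n)$ is a bounded martingale on $(G^\N,\Pee)$. By Doob's theorem it converges $\Pee$-a.s. and in $L^1$ to a limit $M_\infty$ with $\|M_\infty\|_\infty\le\|f\|_\infty$. Since $M_\infty$ can be recovered from $y_n,y_{n+1},\dots$ for every $n$, it lies in the tail $\sigma$-algebra, hence is measurable with respect to the measurable envelope of $\sim$, and therefore descends to a function $\Psi f\in L^\infty(\Gamma,\nu)$ with $M_\infty=(\Psi f)\circ\bnd$ and $\|\Psi f\|_\infty\le\|f\|_\infty$.

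To see that these are mutually inverse, observe first that under $\Pee^g$ (the law of the walk started at $g$, which is the pushforward of $\Pee$ by left translation by $g$) the martingale still starts at $f(g)$, so bounded convergence gives
$$f(g)=\Ex^g[M_\infty]=\int_\Gamma (\Psi f)\,d(g_*\nu)=(\Phi\Psi f)(g),$$
which yields $\Phi\circ\Psi=\id$. For the other composition, given $\phi$ let $f=\Phi\phi$ and note that $f(y_n)=\Ex[\phi\circ\bnd\mid\mathcal{F}_n]$ because $\bnd$ is $T$-invariant on the measurable envelope. L\'evy's upward convergence theorem then forces $M_n\to\phi\circ\bnd$ $\Pee$-a.s., so $\Psi\Phi\phi=\phi$. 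Finally, the norm inequalities $\|\Phi\phi\|_\infty\le\|\phi\|_\infty$ and $\|\Psi f\|_\infty\le\|f\|_\infty$ combined with $\Psi\Phi=\id$, $\Phi\Psi=\id$ yield isometry.

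The one genuinely delicate point, and in my view the main obstacle, is the identification of the tail $\sigma$-algebra on $G^\N$ with the pullback of the Borel $\sigma$-algebra on $\Gamma$ up to $\Pee$-null sets. The tail $\sigma$-algebra is a priori smaller than the $\sim$-invariant $\sigma$-algebra, but taking the \emph{measurable envelope} of $\sim$ (which is how $\Gamma$ was defined) is exactly what identifies tail-measurable functions modulo null sets with $\nu$-measurable functions on $\Gamma$; this is a standard but slightly subtle ergodic-theoretic fact that needs to be invoked to conclude that $M_\infty$ genuinely descends to a function on $\Gamma$.
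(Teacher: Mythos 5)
Your proposal is correct and follows essentially the same route as the paper: the Poisson formula in one direction (harmonicity from stationarity of $\nu$), martingale convergence in the other, and isometry from the two norm inequalities together with the maps being mutually inverse. The only remark worth making is that your ``delicate point'' is less delicate than you fear: the limit $M_\infty$ satisfies $M_\infty\circ T=M_\infty$ $\Pee$-a.e.\ directly (not merely tail-measurability, which by itself would not suffice, since tail sets need not be $\sim$-invariant), and $T$-invariance a.e.\ is exactly what is needed to descend to $(\Gamma,\nu)$ as defined via the measurable envelope.
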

\begin{proof}
Given $f\in H^\infty(G,\mu)$ and $y\in G^\N$, the random variables $f(y_n)$ are a bounded martingale,
and therefore by the martingale convergence theorem (\cite{Stroock} Thm.~5.2.22),
converge a.s. to a well-defined limit. Evidently this limit
is measurable and $T$-invariant, and therefore descends to a function on $\Gamma$
which we denote $\widehat{f}$. Explicitly,  $\widehat{f}(\bnd y):=\lim_{n \to \infty} f(y_n)$.

Conversely, given $\widehat{f} \in L^\infty(\Gamma,\nu)$ we define 
$f(g) = \int_\Gamma \widehat{f} d(g_*\nu)$ (this expression is known as the {\em Poisson formula}).
Since $\nu$ is stationary, $f$ is harmonic.

The mean value property of harmonic functions implies that these maps are isometries,
since a harmonic function achieves its maximum on the boundary.
\end{proof}

Note that the Poisson formula is available for {\em any} $\mu$-boundary. That is, if $B$ is
a $G$-space with a $\mu$-stationary probability measure $\lambda$, and $\widehat{f}$ is
any element of $L^\infty(B,\lambda)$, then $f(g):=\int_B \widehat{f} d(g\lambda)$ is
a bounded harmonic function on $G$. If $\lambda$ is not invariant, $f$ is typically
nonconstant.

The remainder of this section is devoted to some miscellaneous 
applications of random walks to hyperbolic and other groups.

\subsection{Green metric}

There is a close resemblance between the measure $\nu$ and the Patterson--Sullivan
measures constructed in \S~\ref{Patterson_Sullivan_subsection}. 
This resemblance can be sharpened if one looks at a natural metric
on $G$ adapted to the random walk, namely the so-called {\em Green metric}.

\begin{definition}
Let $G$ be a group and $\mu$ a probability measure on $G$ with finite first moment.
The {\em Green metric} on $G$ is the metric 
for which the distance between $g$ and $h$
is $-\log$ of the probability that random walk starting at $g$ ever hits $h$.
\end{definition}

If $\mu$ is symmetric, so is the Green metric, since random walks are time-reversible.
Note that the Green metric is degenerate if random walk is recurrent. For simple
random walk, this occurs only if $G$ is finite, 
or is virtually $\Z$ or $\Z^2$, by a classical result of Varopoulos (see \cite{Varopoulos_etal}).
For nondegenerate measures with finite first moment on
non-elementary hyperbolic groups, Blach\`ere and Brofferio \cite{Blachere_Brofferio} 
show that the Green metric
and the word metric are quasi-isometric (one needs to be somewhat careful: the
Green metric is {\em not} in general a geodesic metric). 

\begin{theorem}[Blach\`ere--Ha\"issinsky--Mathieu \cite{bhm}, Thm.~1.3]\label{harmonic_measure_dimension}
Let $G$ be a non-elementary hyperbolic group, and for $y\in \partial_\infty G$,
let $B(y,R)$ denote the ball of radius $R$ in the $a$-metric 
(see Definition~\ref{a_distance}). Let $\mu$ be
a symmetric probability measure with finite first moment, and let $\nu$ be
the associated harmonic measure on $\partial_\infty G$. Then for $\nu$-almost every
$y\in \partial_\infty G$, there is convergence
$$\lim_{R \to 0} \log \nu(B(y,R))/\log R = \ell_G/aL$$
where $L$ is the drift in the word metric, and $\ell_G$ is the drift in the
Green metric.
\end{theorem}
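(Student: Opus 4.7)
The plan is to establish a ``shadow lemma'' for the harmonic measure $\nu$ analogous to Lemma~\ref{size_of_shadow}, and then combine it with the regularity of random walk paths given by Corollary~\ref{random_walk_near_geodesic}. Specifically, I would first show that there exist constants $R_0, C>0$ such that for all $R \ge R_0$ and all $g \in G$,
$$C^{-1} e^{-d_G(\id,g)} \le \nu(S(g,R)) \le C\, e^{-d_G(\id,g)},$$
where $d_G$ is the Green metric. The lower bound is relatively soft: a random walk that ever visits $g$ has conditional future distribution equal to the $g$-translate of the original walk, and by Corollary~\ref{random_walk_near_geodesic} this future converges $\Pee$-a.e.\ to a boundary point in the shadow $S(g,R)$ for $R$ large enough. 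The upper bound is the Ancona-type inequality: a random walk converging to a point of $S(g,R)$ must, with uniformly controlled probability, hit a bounded neighborhood of $g$. This factorization estimate is the technical heart of the matter.

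Second, I would use Lemma~\ref{a_comparison_lemma} to sandwich balls in the $a$-metric by shadows: a ball $B(y,R)\subset \partial_\infty G$ contains a shadow $S(g_1,R_0)$ and is contained in a shadow $S(g_2,R_0)$ whose distances to the identity satisfy $|g_i| = -\log R/\log a + O(1)$ (where $g_1,g_2$ lie on a geodesic ray to $y$). Combined with the shadow lemma above, this gives
$$\log \nu(B(y,R)) = -d_G(\id,g(R)) + O(1),$$
where $g(R)$ is a group element on the geodesic $[\id,y)$ at word distance $\approx -\log R/\log a$ from the identity.

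Third, I would transfer the drift estimates to the geodesic ray $\gamma_y$. Kingman's subadditive ergodic theorem applied to the cocycle $n\mapsto d_G(\id,y_n)$ (using that the Green metric is $G$-invariant, is quasi-isometric to the word metric by Blach\`ere--Brofferio, and has $d_G(\id,\cdot)\in L^1(\mu)$ since $d_G\le C(|\cdot|+1)$ coming from a path of single steps) shows $d_G(\id,y_n)/n\to \ell_G$ almost surely. By Corollary~\ref{random_walk_near_geodesic}, for $\nu$-a.e.\ $y\in \partial_\infty G$ the random walk $y_n$ satisfies $d(y_n,\gamma_y(nL)) = o(n)$, and so by the QI-comparison of the two metrics the elements $g(R)\in \gamma_y$ satisfy
$$d_G(\id,g(R)) = \frac{\ell_G}{L}\cdot |g(R)| + o(|g(R)|) = \frac{\ell_G}{L\log a}(-\log R) + o(-\log R).$$
Dividing $\log \nu(B(y,R))$ by $\log R$ and letting $R\to 0$ then yields the stated limit (the factor $\log a$ versus $a$ is purely a matter of the convention used in defining the $a$-metric).

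The main obstacle is the upper bound in the shadow lemma for $\nu$: showing that random walks converging to $S(g,R)$ must essentially pass through $g$. In Euclidean or tree-like settings this is a soft Markov-property argument, but in the hyperbolic group setting one needs a uniform Harnack/Ancona inequality for $\mu$-harmonic functions, controlling ratios of Green functions $G(\id,h)/G(g,h)$ when any path from $\id$ to $h$ must nearly pass through $g$. Establishing this rigorously for arbitrary symmetric $\mu$ of finite first moment is the substantive analytic input of \cite{bhm}; everything else is comparison geometry and ergodic theory.
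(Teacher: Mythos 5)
The paper offers no proof of this statement: it is imported verbatim from Blach\`ere--Ha\"issinsky--Mathieu, and the only accompanying remark is that Kingman's theorem makes the Green drift $\ell_G$ well-defined. So there is no in-paper argument to compare yours against; I can only judge your outline against the actual proof in \cite{bhm}, and on that score your roadmap is essentially the right one: a shadow lemma expressing $\nu(S(g,R))$ in terms of $e^{-d_G(\id,g)}$ (equivalently, the Green function $G(\id,g)$), the sandwiching of $a$-metric balls between shadows via Lemma~\ref{a_comparison_lemma}, and the two applications of the subadditive ergodic theorem giving $|y_n|/n\to L$ and $d_G(\id,y_n)/n\to\ell_G$, spliced together along the geodesic ray of Corollary~\ref{random_walk_near_geodesic}. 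Your diagnosis of the $a$ versus $\log a$ discrepancy is also correct: the theorem as printed uses the BHM convention in which the visual metric is comparable to $e^{-a(y|z)}$ with $a$ an exponent, whereas Definition~\ref{a_distance} makes it comparable to $a^{-(y|z)}$, so the denominator should be read as $L\log a$ in this paper's normalization.

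The genuine gap is the one you yourself flag: the upper bound in the shadow lemma. Everything else in your sketch is soft (the lower bound via the strong Markov property at the hitting time of $g$, plus uniformity of $\nu_g(g^{-1}S(g,R))$ for large $R$; the ball--shadow comparison; the ergodic theory), but the inequality $\nu(S(g,R))\le C\,G(\id,g)$ requires knowing that a walk converging to a boundary point shadowed by $g$ must pass through a bounded neighborhood of $g$ with probability bounded away from $0$ --- i.e.\ the multiplicativity $G(\id,h)\asymp G(\id,g)G(g,h)$ when $g$ lies near a geodesic from $\id$ to $h$. That Ancona-type estimate is the entire analytic content of the theorem, and it cannot be extracted from anything proved elsewhere in this paper (the Harnack-type bounds in the random-walk section are far too weak, and the Patterson--Sullivan shadow lemma uses the quasiconformality of $\nu$, which harmonic measure does not a priori enjoy). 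Note also that for measures with merely finite first moment this step is delicate even in the literature --- Ancona's original argument needs finite support or superexponential moments --- so if you want a complete proof you should either restrict the hypothesis or supply the extension explicitly. As a blueprint your proposal is faithful to \cite{bhm}; as a proof it is missing exactly the theorem's hard part.
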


Note that Kingman's subadditive ergodic theorem implies that
the drift $\ell_G$ with respect to the Green metric is well-defined, essentially
by the same argument as the proof of Lemma~\ref{drift_exists}.

\subsection{Harnack inequality}

The classical {\em Harnack inequality} relates the values of a positive harmonic
function at two points. In its infinitesimal version, it asserts an upper bound
on the logarithmic derivative of a positive harmonic function.

Let $f$ be a non-negative bounded harmonic function on $\H^n$, for simplicity.
The Poisson formula says that $f(p) = \int_{S^{n-1}_\infty} \widehat{f} d\nu_p$
where $\nu_p$ is the visual measure as seen from $p$. If $\nu$ is visual measure
as seen from the origin, and $g$ is any isometry taking the origin to $p$, then
$\nu_p=g_*\nu$. To understand how $f$ varies as a function of $p$ therefore, it
suffices to understand how $\nu_p$ varies as a function of $p$. If
$B$ is an infinitesimal ball centered at some point $y$ in $S^{n-1}_\infty$, 
then the visual size of $B$ grows like $e^{t(n-1)}$ as one moves distance $t$ in the
direction of $y$. Hence:

\begin{proposition}[Harnack inequality]
Let $f$ be a non-negative bounded harmonic function on $\H^n$. Then the logarithmic
derivative of $f$ satisfies the inequality $|d\log f| \le (n-1)$.
\end{proposition}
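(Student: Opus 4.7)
The plan is to differentiate the Poisson formula along geodesics and bound the resulting derivative using the Lipschitz property of Busemann functions, making precise the heuristic already given in the excerpt.

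First I would make the preamble quantitative: for any two points $p,q\in\H^n$ the visual measures $\nu_p$ and $\nu_q$ are mutually absolutely continuous, and the Radon--Nikodym derivative is
\[
\frac{d\nu_q}{d\nu_p}(y)=e^{(n-1)(b_y(p)-b_y(q))},
\]
where $b_y$ is a Busemann function centered at $y\in S^{n-1}_\infty$. This is a restatement of the visual-size scaling observed just before the statement: moving a distance $t$ toward $y$ stretches an infinitesimal boundary ball at $y$ by $e^{t(n-1)}$, and moving in an arbitrary direction changes the apparent size by the same law, with $t$ replaced by the signed change in $b_y$. (One can verify this directly in the upper half-space model, where the isometries act as M\"obius transformations on $S^{n-1}_\infty$ whose conformal factor raised to the $(n-1)$st power gives the density.)

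Next, fix $p\in\H^n$ and a unit-speed geodesic $\gamma$ with $\gamma(0)=p$. Substituting the density above into the Poisson formula gives
\[
f(\gamma(t))=\int_{S^{n-1}_\infty}\widehat f(y)\,e^{(n-1)(b_y(p)-b_y(\gamma(t)))}\,d\nu_p(y).
\]
Since $\widehat f$ is bounded and the integrand is uniformly Lipschitz in $t$ (with constant at most $(n-1)\|\widehat f\|_\infty$, because each $b_y$ is $1$-Lipschitz), I can differentiate under the integral sign at $t=0$ to obtain
\[
(df)_p(\gamma'(0))=-(n-1)\int_{S^{n-1}_\infty}\widehat f(y)\,(db_y)_p(\gamma'(0))\,d\nu_p(y).
\]

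Finally, I would use that each $b_y$ is $1$-Lipschitz, so $|(db_y)_p(\gamma'(0))|\le 1$, together with the non-negativity of $\widehat f$ (inherited from $f\ge 0$ via the Poisson formula) to bound the integrand in absolute value by $(n-1)\widehat f(y)$. This yields
\[
|(df)_p(\gamma'(0))|\le(n-1)\int_{S^{n-1}_\infty}\widehat f\,d\nu_p=(n-1)f(p),
\]
which is exactly $|d\log f|\le n-1$. The only step that is not a bookkeeping argument is the derivation of the explicit Radon--Nikodym formula; everything after that is a one-line estimate. If one is willing to simply cite conformal invariance of visual measure (as a density of dimension $n-1$), the entire proof compresses into half a page.
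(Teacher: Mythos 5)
Your proposal is correct and follows exactly the route the paper sketches: it quantifies the heuristic that visual measure scales like $e^{t(n-1)}$ by writing the Radon--Nikodym derivative $d\nu_q/d\nu_p$ in terms of Busemann functions, differentiates the Poisson formula under the integral sign, and uses that Busemann functions are $1$-Lipschitz together with $\widehat f\ge 0$. This is a faithful and properly rigorous version of the paper's own argument, with no essential difference in approach.
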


If $f$ is a non-negative harmonic function on a group $G$, the analog of this
inequality is $f(gs)/f(g) \le e^D$ for any $g\in G$ and $s\in S$
where $D$ is the dimension of $\nu$, which can be determined from 
Theorem~\ref{harmonic_measure_dimension}.

If $S$ is a closed surface of genus $\ge 2$ and $\rho:\pi_1(S) \to G$ is injective,
then $\pi_1(S)$ acts on $G$ by left translation, and there is an associated foliated
bundle with fiber the ideal circle $\partial_\infty \pi_1(S)$ with its natural
$\pi_1(S)$ action. We can build a harmonic connection for this circle bundle;
i.e.\/ a choice of measure $m_g$ on the circle $S^1(g)$ over each $g\in G$ so that for
any subset $A\subset S^1$ we have $m_g(A) = \sum \mu(s)m_{gs}(A)$. Since the circle
is $1$-dimensional, these measures integrate to {\em metrics} on the circles $S^1(g)$
for which the curvature is harmonic. The Harnack inequality then gives {\it a priori}
bounds on this curvature, and one can deduce local compactness results for families of
injective surface maps of variable genus. For stable minimal surfaces in hyperbolic
$3$-manifolds, such {\it a priori} bounds were obtained by Schoen \cite{Schoen} and
are an important tool in low-dimensional topology. The idea of using Harnack-type
inequalities to obtain curvature bounds is due to Thurston \cite{Thurston_circles}
(also see \cite{Calegari_foliations}, Example~4.6).

\subsection{Monotonicity}

A {\em norm} on a group is a non-negative function $\tau:G \to \R$ 
so that $\tau(gh) \le \tau(g) + \tau(h)$ for all $g,h\in G$.
A functor from groups to norms is {\em monotone} if $\tau_H(\phi(g)) \le \tau_G(g)$
for any $g\in G$ and $\phi:G \to H$.

If $\tau$ is a norm on $G$, and $\mu$ is a probability measure with finite first
moment, it makes sense to study the growth rate of $\tau$ under $\mu$-random walk on $G$.
If $G$ is finitely generated, one can study the growth rate of $\tau$ under all
simple random walks; if they all have the same growth rate, this rate is an invariant
of $G$. Since $\mu$ random walk on $G$ pushes forward to $\phi_*\mu$ random walk
on $\phi(G)=H$, the growth rate of a monotone family of norms cannot increase
under a homomorphism; thus if the growth rate of $\tau_G$ on $G$ is strictly smaller
than the growth rate of $\tau_H$ on $H$, there are strong constraints on the homomorphisms
from $G$ to $H$.

As an example, consider the {\em commutator length} $\cl$. For any group $G$ and
any $g$ in the commutator subgroup $[G,G]$, the commutator length $\cl(g)$ is
just the least number of commutators in $G$ whose
product is $g$ (for technical reasons, one usually studies a closely related quantity, namely the
{\em stable commutator length}; see e.g\/ \cite{Calegari_scl} for an introduction).

One of the main theorems of \cite{Calegari_Maher} is as follows:
\begin{theorem}[Calegari--Maher \cite{Calegari_Maher}]
Let $G$ be hyperbolic, and let $\mu$ be a nondegenerate
symmetric probability measure with finite first moment whose
support generates a nonelementary subgroup. There is a constant $C$ so that if 
$g_n$ is obtained by random
walk of length $n$, conditioned to lie in $[G,G]$, then 
$$C^{-1} n/\log(n) \le \cl(g_n) \le C n/\log(n)$$
with probability $1-O(C^{-n^c})$.
\end{theorem}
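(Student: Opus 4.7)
The statement has two halves: a lower bound $\cl(g_n)\ge C^{-1}n/\log n$ and an upper bound $\cl(g_n)\le Cn/\log n$, both with probability $1-O(C^{-n^c})$. The plan is to prove them by essentially independent arguments, the lower bound by quasimorphism duality together with a pigeonhole argument on the random trajectory, and the upper bound by a deterministic block-cancellation scheme applied to the trajectory after controlling $|g_n|$.

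For the lower bound, the plan is to invoke Bavard duality, which asserts that for $g\in [G,G]$ the stable commutator length satisfies $\scl(g)=\frac{1}{2}\sup_\phi \phi(g)/D(\phi)$ where the supremum ranges over homogeneous quasimorphisms $\phi$ of defect $D(\phi)>0$; since $\cl\ge\scl$, it suffices to exhibit, for each realization of the walk, a quasimorphism $\phi$ with $\phi(g_n)/D(\phi)\gtrsim n/\log n$. The quasimorphism will be a small counting quasimorphism $\phi_w:=\con{w}-\con{w^{-1}}$ in the sense of Epstein--Fujiwara, where $w$ is chosen to appear unusually often along the trajectory of $g_n$. I would first argue that for $k=c\log n$ with a small enough $c>0$ one has $|S|^k\ll n/\log n$, so that by pigeonhole there is a word $w$ of length $k$ occurring as a disjoint subword of the increment sequence $z_1z_2\cdots z_n$ with multiplicity at least $c'n/\log n$; concentration for the random walk (finite first moment plus Azuma-type estimates on block frequencies) upgrades this to a statement with stretched-exponential probability $1-O(C^{-n^c})$. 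The second step is to quote the Calegari--Fujiwara estimate $D(\phi_w)=O(1)$ uniformly in $w$ when $w$ is chosen to avoid periodic overlaps, and to argue that the random trajectory almost surely produces such a $w$. Combining these two inputs via Bavard duality produces the lower bound.

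For the upper bound, I would first use the finite first moment hypothesis to assert $|g_n|_S\le C_1 n$ with probability $1-O(e^{-cn})$, which is a standard large-deviations consequence of writing $g_n$ as a product of $n$ i.i.d.\ increments with $\mathbf{E}|z_i|<\infty$. The main structural step is then the \emph{deterministic} claim: a geodesic word $w\in S^*$ of length $N$ lying in $[G,G]$ can be rewritten as a product of $O(N/\log N)$ commutators. The proof uses a block-pigeonhole argument. Cut $w$ into $N/k$ consecutive blocks of length $k=c\log N$ with $|S|^k\le N^{1/2}$; by pigeonhole many blocks evaluate to the same element or to inverses of one another. A single commutator identity of the form $aXbYa^{-1}=[a, XbYa^{-1}]\cdot XbY$ allows one to move a repeated block past an intervening segment at the cost of one commutator, and after at most $N/k$ such moves one cancels the paired blocks entirely, reducing the effective word length by $2k$ per commutator spent. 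Iterating and unwinding the recursion gives the bound $\cl(g_n)\le C n/\log n$.

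The hardest step, and the real crux, is the lower bound: specifically, producing a single $w$ of length $\Theta(\log n)$ that simultaneously (i) appears $\Omega(n/\log n)$ times along the trajectory of $g_n$, and (ii) has bounded counting defect $D(\phi_w)=O(1)$ rather than $O(\log n)$. Obtaining (ii) uniformly in $w$ requires invoking the structural results of Calegari--Fujiwara on counting quasimorphisms in hyperbolic groups, and carefully selecting $w$ among the many candidates produced by pigeonhole in (i) to avoid periodic or self-overlapping patterns; the stretched-exponential probability estimate $1-O(C^{-n^c})$ comes from the concentration of the number of appearances of $w$, which is a sum of weakly dependent indicators along the walk that can be controlled by a Markov chain large-deviations estimate for the Markov structure on shift space described in Section~\ref{Markov_chain_subsection}.
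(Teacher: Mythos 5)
Both halves of your proposal have gaps that I think are fatal, and they are instructive because they explain why the actual Calegari--Maher argument looks quite different (the present paper only cites \cite{Calegari_Maher}; it contains no proof to compare against). For the lower bound: first, the pigeonhole count is off by a polynomial factor --- distributing $n/k$ blocks among $|S|^{k}\approx n^{c\log|S|}$ values only forces some block value to repeat about $n^{1-c\log|S|}/\log n$ times, not $\Omega(n/\log n)$; to get $n/\log n$ repetitions of a single word you would need $k=O(1)$, not $k=\Theta(\log n)$. Second, and more fundamentally, the Epstein--Fujiwara quasimorphism is the antisymmetrization $c_w-c_{w^{-1}}$, and since $\mu$ is \emph{symmetric} the numbers of copies of $w$ and of $w^{-1}$ along the trajectory (or along a geodesic representative of $g_n$, a passage you also leave implicit) agree to leading order; their difference is a mean-zero fluctuation of size roughly $\sqrt{n}$. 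This is consistent with the central limit theorems in this paper: any fixed quasimorphism-type function of $g_n$ is $O(\sqrt{n})$ after centering, so no choice of $w$ makes $\phi_w(g_n)/D(\phi_w)$ comparable to $n/\log n$, and Bavard duality with counting quasimorphisms cannot yield the lower bound. (Uniform boundedness of $D(\phi_w)$ is the one part that is fine.) The actual lower bound is an entropy/counting argument: if $\cl(g_n)\le k$ then $g_n$ bounds a surface carried by a fatgraph with $O(k)$ edges, so a geodesic word for $g_n$ can be cut into $O(k)$ segments pairing off into inverse pairs; the probability that the walk admits such a cut-and-pair structure is bounded by the (polynomially many to the power $O(k)$) choices of pattern times exponentially small matching probabilities (nonamenability gives $\sup_g\mu^{*m}(g)\le C\rho^m$, $\rho<1$), and this is negligible unless $k\ge c\,n/\log n$. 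Also, with only finite first moment (and possibly infinite support) you get neither the finite-alphabet pigeonhole nor the exponential bound on $|g_n|$ that you assert.

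For the upper bound, your central ``deterministic claim'' is false: in $F_2$ the geodesic word $[a,b]^N$ has length $4N$, lies in the commutator subgroup, and $\cl([a,b]^N)\ge\scl([a,b]^N)=N/2$, which is linear in the length rather than $O(N/\log N)$. So no statement of the form ``every geodesic word of length $N$ in $[G,G]$ is a product of $O(N/\log N)$ commutators'' can be the engine of the proof; randomness is essential on this side too. Moreover your block-pigeonhole produces blocks evaluating to the \emph{same} element, whereas the commutator trick you describe cancels a block against an \emph{inverse} block; equal blocks give no length reduction. The correct mechanism (as in \cite{Calegari_Maher}, and in the sharper free-group version of Calegari--Walker) is that with probability $1-O(C^{-n^c})$ the random trajectory is \emph{balanced}: subwords of length about $C\log n$ can be matched with inverse copies elsewhere along the word (this is where symmetry of $\mu$ and concentration enter, together with the conditioning into $[G,G]$), and gluing along these matched pairs constructs a surface, hence a product of $O(n/\log n)$ commutators, bounding $g_n$. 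In short, the roles are the reverse of what you propose: the probabilistic balancing does the upper bound, and a counting-versus-entropy argument (not quasimorphism duality) does the lower bound.
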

Said another way, commutator length grows like $n/\log(n)$ under random walk in a hyperbolic group.
Similar estimates on commutator length can be obtained for groups acting in a suitable way on
(not necessarily proper) hyperbolic spaces; the most important examples are mapping class groups 
and relatively hyperbolic groups.

As a corollary, if $H$ is any finitely generated group, and commutator length in $H$ grows
like $o(n/\log(n))$
for simple random walk (with respect to some generating set), then there are
no interesting homomorphisms from $H$ to any hyperbolic group $G$, 
and no interesting actions of $H$ on certain hyperbolic complexes.

\section{Acknowledgments}
I would like to thank Vadim Kaimanovich, Anders Karlsson, Joseph Maher, 
Curt McMullen, Richard Sharp, and Alden Walker. I would also like to thank the
anonymous referee for some useful comments. 
Danny Calegari was supported by NSF grant DMS 1005246.

\end{document}